\numberwithin{equation}{section}
\theoremstyle{plain}
\newtheorem{theorem}{Theorem}[section]
\newtheorem{lemma}[theorem]{Lemma}
\newtheorem{proposition}[theorem]{Proposition}
\theoremstyle{remark}
\newtheorem{remark}[theorem]{Remark}
\newcommand{\eps}{\varepsilon}
\newcommand{\vphi}{\varphi}
\newcommand{\calL}{\mathcal{L}}
\newcommand{\calF}{\mathcal{F}}
\newcommand{\calG}{\mathcal{G}}
\newcommand{\calM}{\mathcal{M}}
\newcommand{\calR}{\mathcal{R}}
\newcommand{\calS}{\mathcal{S}}
\newcommand{\calC}{\mathcal{C}}
\newcommand{\calB}{\mathcal{B}}
\newcommand{\calH}{\mathcal{H}}
\newcommand{\E}{\operatorname{\mathbb{E}}} 
\renewcommand{\P}{\operatorname{\mathbb{P}}} 
\newcommand{\R}{\mathbb{R}}
\newcommand{\mQ}{\mathbb{Q}}
\newcommand{\N}{{\mathbb{N}}}
\newcommand{\prt}{\partial}
\newcommand{\ol}{\overline}
\newcommand{\wh}{\widehat}
\newcommand{\wt}{\widetilde}
\DeclareMathOperator{\dist}{dist}
\newcommand{\bone}{\mathbf{1}}
\def\n{{\bf n}}
\newcommand{\set}[1]{\left\{#1\right\}}
\newcommand{\FF}{\mathbb F}
\newcommand{\QQ}{\mathbb Q}
\renewcommand{\H}{\mathbf H}
\newcommand{\cH}{\mathcal H}
\newcommand{\X}{\mathbf X}
\newcommand{\x}{\mathbf x}
\newcommand{\bfa}{\mathbf a}
\newcommand{\bQ}{\mathbf Q}
\newcommand{\cA}{\mathcal A}
\newcommand{\cL}{\mathcal L}
\begin{document}

\begin{frontmatter}
\title{The Spine of the Fleming-Viot process\\ driven by Brownian motion}
\runtitle{Spine of Fleming-Viot process}

\begin{aug}
\author[A]{\fnms{Krzysztof} \snm{Burdzy}\ead[label=e1]{burdzy@uw.edu}}
\and
\author[B]{\fnms{J\'anos} \snm{Engl\"ander}\ead[label=e2,mark]{janos.englander@colorado.edu}}
\address[A]{Department of Mathematics, University of Washington, \printead{e1}}

\address[B]{Department of Mathematics, University of Colorado, \printead{e2}}
\end{aug}

\begin{abstract}
We show that the spine of the Fleming-Viot process driven by Brownian motion in a bounded Lipschitz domain with Lipschitz constant less than 1 converges to Brownian motion conditioned to stay in the domain forever.
\end{abstract}

\begin{keyword}[class=MSC]
\kwd[Primary ]{60J80, 60J65}
\end{keyword}

\begin{keyword}
\kwd{Fleming-Viot process; branching process; spine; $h$-transform}
\end{keyword}

\end{frontmatter}


\section{Introduction}\label{intro}

Under  suitable assumptions, a branching process can be decomposed into a spine and side branches. 
Heuristically speaking, the spine has the distribution of the driving process conditioned on non-extinction.
 We will prove this claim for the ``Fleming-Viot branching process'' introduced in \cite{BHM00}. In this paper, individuals follow independent Brownian motions and are killed on the boundary of a bounded  Lipschitz set.

A Fleming-Viot process is an extreme case of the Moran model introduced in \cite{Moran} (see \cite[Def.~5.12]{AE} for the modern discussion).
In the Moran model, individuals branch at a (bounded) intensity. In our model, 
an individual branches only when some other individual hits the ``boundary'' of the state space.

Our main result on the asymptotic spine distribution is limited to Fleming-Viot processes driven by Brownian motion. We conjecture that an analogous result holds for every Fleming-Viot process  (perhaps under mild technical assumptions). An analogous theorem was proved for Fleming-Viot processes driven by continuous time Markov processes on finite spaces in \cite{BiBu}.
That article contained extra results on the branching structure, missing from the present paper, namely, it was proved that the rate of branching along the spine converges to twice the rate for a generic particle
and  the distribution of a side branch converges to the distribution of a branching process with the critical branching rate. Weak convergence of spines to the driving process conditioned to survive forever has been recently proved in \cite[Cor. 5.3]{ToughArxiv} in the case when the driving process is a normally reflected diffusion in a compact domain with soft killing (see \cite[Sec. 1.3]{ToughArxiv}).

The literature on branching processes is huge so we will mention only a few key publications. Most of them contain extensive reference lists. 
A precursor of our model can be found in a paper by Moran \cite{Moran}. The book by Jagers \cite{Jagers75} is a classical treatise on branching processes and their applications to biology. A modern review of continuous time and space branching can be found in a book by Etheridge \cite{AE}.
The ``Evans' immortal particle picture'' was introduced in \cite{Evans}.
The ``look-down'' process was defined by Donnelly and Kurtz in \cite{DK96}.
Modern approaches to the spine can be found in \cite{EK2004} and \cite{Henard}.
Some of the most profound analysis of the genealogical structure of the Moran and related models appeared in \cite{GLW05,DGP12,GPW13,SeidelThesis}.

Our paper is organized as follows.
Our proof is  complicated so we start with a non-technical review of the proof strategy in Section \ref{j30.1}.
 Section~\ref{review} contains 
basic definitions and a review of known results. Section \ref{sec:main} contains the statement of Theorem \ref{a1.7}, our main result, and its proof. The proof is based on many estimates that are relegated to Section \ref{sec:est}.
We present a generalization of our main result to non-Brownian processes in Section \ref{sec: general}.
 Section \ref{sec:super} contains a short and informal review of the results on spines of superprocesses.

\section{Heuristic outline of the argument}\label{j30.1}

Consider a fixed population of $n\ge 2$ individuals that move independently according to Brownian motion. They all start inside a domain $\Lambda$ and are killed when they exit it. When an individual is killed, another individual, chosen randomly (uniformly), branches into two individuals. Therefore, the population size is always equal to $n$.

Assume that $\Lambda$ is a Lipschitz domain with a Lipschitz constant smaller than 1. It has been proven that there exists a single trajectory inside the branching structure, referred to as a ``spine,'' which extends over the entire time interval $[0,\infty)$ and never hits the boundary of $\Lambda$ (see Sections \ref{ssec:DHPs} and \ref{de2.1}).

Our argument involves ``Brownian motion conditioned to stay in $\Lambda$ forever.'' To clarify this concept, one can appeal to the main theorem in \cite{Pin}: the distribution of Brownian motion conditioned to stay in $\Lambda$ for a long time is close to that of a process obtained from Brownian motion by a space-time Doob's $h$-transform.  We state and prove our version of Pinsky's result in Lemma \ref{a14.3} since we require a specific order of quantifiers.

The main result of our paper is stated in Theorem \ref{a1.7}: as the population size $n$ increases to infinity, the spine processes converge in distribution to Brownian motion conditioned to stay in $\Lambda$ forever.

Below, we give a heuristic overview of the main ideas in the proof. 
The proof consists of multiple steps. Two of them are the following. Fix $\varepsilon>0$.

(A) (convergence of conditioned Brownian motions). For sufficiently large $s>0$, there is a $t_0=t_0(s)>s$ such that for all $t>t_0$, the following holds. 
Let BB (``Brownian bridge'') be  Brownian motion conditioned to stay in $\Lambda$ up to time $t$ and conditioned further to have a distribution $\nu$ at time $t$. Then, restricted to $[0,s]$, the (Prohorov) distance between the distribution of BB and that of Brownian motion conditioned to stay in $\Lambda$ forever is less than $\varepsilon$, uniformly over all probability distributions $\nu$ on $\Lambda$.

The second result we prove roughly states that for large $n$, the spine trajectory up to time $t$ is close to a particular Brownian bridge.

(B) (convergence of spines). Let $\nu_t^n$ denote the distribution of the $n$-spine (spine of the process with $n$ individuals) at time $t$ on $\Lambda$. Given $t>0$, for all sufficiently large $n$ (i.e., all $n>n_0(\varepsilon,t))$, the distribution of the trajectory of the $n$-spine on $[0,t]$ is closer than $\varepsilon$ to the distribution of Brownian motion conditioned to stay inside $\Lambda$ up to time $t$ and further conditioned on having distribution $\nu_t^n$ at time $t$.

Although we do not know $\nu_t^n$, Theorem \ref{a1.7} follows from (A) and (B) because the estimate in (A) is uniform in $\nu$.

Our proof of (B) relies heavily on \cite{Villemonais13}. The result in \cite{Villemonais13} states that when the population size $n$ is large, the distribution of the location of a randomly (uniformly) chosen individual at time $t$ is close to the distribution of the position at time $t$ of the driving process conditioned to stay in $\Lambda$ until time $t$. Villemonais' theorem applies to branching populations driven by very general Markov processes. This has been used in \cite{BiBu} to show that Villemonais' theorem can be applied not only to the positions of individuals at time $t$, but also to the whole trajectories (genealogies) of individuals alive at time $t$. In other words, if time $t>0$ is fixed and the population size $n$ is large, and one chooses an individual alive at time $t$ randomly (uniformly), then the distribution of the genealogy of this individual on $[0,t]$ is close to the distribution of the driving process on $[0,t]$ conditioned to stay in $\Lambda$ until time $t$.

Unfortunately, this very general theorem of Villemonais is not directly applicable in our setting. This is because, given the location of all individuals at time $t$, the individual residing on the spine is determined by the Fleming-Viot process on the interval $[t,\infty)$, and there is no reason why the spine position should be chosen uniformly among all individuals at time $t$ (in hindsight, our main theorem implies that it is not). Hence, in order to surmount this difficulty and make Villemonais' result applicable to our problem, we will need to consider a randomly picked particle with the (unknown to us) distribution of the spine at time $t$.

Villemonais proved a quantitative version of his theorem in \cite{Villemonais13}, which we  use to show that the genealogical trajectory of an individual chosen randomly (uniformly) in a small cube $Q\subset \Lambda$ at time $t$  has a distribution close to the distribution of Brownian motion conditioned to stay in $\Lambda$ on the time interval $[0,t]$ and reach $Q$ at time $t$.
Then we prove that, for a fixed small cube, the probability of the spine passing through any individual inside the cube at time $t$ is very close to the probability of it passing through any other individual present in the same cube at that time. This is due to the fact that, given the positions of individuals in a cube at time $t$ and their positions at a slightly later time $t+\Delta t$, Brownian motions that are conditioned to connect these sets of points are almost equally likely to choose any permutation---this follows from the form of the multidimensional Gaussian distribution. Therefore, on the scale of a single small cube, the spine position is chosen almost uniformly from all individuals present in the cube at time $t$ and, therefore, the Villemonais' estimate mentioned at the beginning of the paragraph can be applied.

We will now discuss a delicate aspect of the above reasoning. We condition the spine to be in $Q$ at a time $t+\Delta t>0$. The event we condition on, referring to the spine, is determined  by the evolution of the process only on $[t+\Delta t,\infty)$, and not on $[0,t)$. Therefore the conditioning has no bearing on the past behavior of the trajectories of the particles in $Q$. This is why for large $n$, the spine, conditioned this way is still close  (in distribution) to Brownian motion conditioned on not hitting the boundary before $t$ and ending up in $Q$ at time $t$.

To use the above to determine spine's position, we divide the domain $\Lambda$ into small cubes. Since we do not know where the spine passes at time $t$, all we can say is that the distribution of the spine on $[0,t]$ is close to the distribution of Brownian motion conditioned to stay in $\Lambda$ on $[0,t]$ and to have some (unknown to us) distribution at time $t$. But this is sufficient in view of (A).

In light of the preceding paragraphs, this completes the main argument for (B) and hence for Theorem \ref{a1.7}. However, there are several loose ends that need to be taken care of. 

First,   trajectories may branch on the small time interval $[t, t+\Delta t]$ mentioned above, but we show that this effect is negligible.

Second, the argument based on small cubes applies only to those cubes that are far from the boundary (relative to the cube size) so that we can assume that the trajectories of individuals starting from the cube at time $t$ have a negligible chance of hitting the boundary of $\Lambda$ during the time interval $[t,t+\Delta t]$. The cubes that are close to the boundary are addressed by proving that the spine is unlikely to be near the boundary of $\Lambda$. We accomplish this by showing that there are no trajectories in the branching structure that remain close to the boundary of $\Lambda$ for an extended period of time. Therefore, as the spine is one of these trajectories, it must stay away from the boundary for much of the time.

\section{Notation, definitions and known results}\label{review}

This section is based on \cite{BiBu}.

Our main theorem will be concerned with Fleming-Viot processes driven by Brownian motion in $\R^d$. Nevertheless we need to consider Fleming-Viot processes with an abstract underlying state space because our arguments will be based on ``dynamical historical processes'' which are Fleming-Viot processes driven by Markov processes with values in function spaces.  

Let $\Gamma$ be a topological space and let $\Lambda$ be a Borel proper subset of $\Gamma$. We will write
$\Lambda^c= \Gamma\setminus \Lambda$. 
Let $\{B_t,t\geq 0\}$ be a continuous time strong Markov process with state space $\Gamma$
whose almost all sample paths are right continuous.  For $s\geq 0$, let
\begin{equation}\label{m29.1}
  \tau_{\Lambda,s}=\inf\set{t>s:B_t\in \Lambda^c}, \qquad \tau_\Lambda = \tau_{\Lambda, 0}.
\end{equation}
We assume that $\Lambda^c$ is absorbing,
i.e., $B_t = B_{\tau_{\Lambda,s}}$ for all $t\geq \tau_{\Lambda,s}$, a.s.

We make the following assumptions.

(A1)	$ \P\left( s <\tau_{\Lambda,s}<\infty \mid B_s = x\right)=1 $ for all $x\in \Lambda$ and $s\geq 0$.

(A2) For every $x\in \Lambda$ and $s\geq 0$, the conditional distribution of $\tau_{\Lambda,s}$ given $\{B_s = x\}$ has no atoms. 

Consider an integer $n\geq 2$ and a family $\set{U_k^i,\, 1\leq i\leq n,\, k\geq 1}$ of jointly independent
random variables such that $U_k^i$ has the uniform distribution on the set
$\set{1,\dotsc,n}\setminus\set{i}$. 

We will use induction to construct a Fleming-Viot type process $\X^n_t=(X^1_t,\dotsc,X^n_t)$, $t\geq 0$, with
values in $\Lambda^n$. 
Let $\tau_0=0$ and consider  the (possibly random) initial configuration $(X_0^{1,1},\dotsc,X_0^{1,n})\in \Lambda^n$. Let 
\begin{equation}\label{o26.1}
X_t^{1,1},\dotsc,X_t^{1,n},\quad t\geq 0,
\end{equation} 
be independent and have the transition probabilities of the process $B$. We assume that processes in \eqref{o26.1} are independent of the family
$\set{U^i_k,\, 1\leq i\leq n,\, k\geq 1}$. Let 
\begin{equation*}
\tau_1=\inf\set{t>0:\exists_{1\leq i\leq n}\, X_t^{1,i}\in \Lambda^c}.
\end{equation*}
By assumption (A2), no pair of processes can exit $\Lambda$ at the same time, so the index $i$ in
the above definition is unique, a.s.

For the induction step, assume that the families 
\begin{equation*}
X_t^{j,1},\dotsc,X_t^{j,n}, \quad t\geq 0,
\end{equation*}
and the stopping times $\tau_j$ have been defined for $j\leq k$. For each $j\leq k$, denote by $i_j$ the unique
index such that $X_{\tau_j}^{j,i_j}\in \Lambda^c$. Let
\begin{equation*}
X_{\tau_k}^{k+1,m}=X_{\tau_k}^{k,m}\quad \text{for $m\neq i_k$,}
\end{equation*}
and
\begin{equation*}
X_{\tau_k}^{k+1,i_k}=X_{\tau_k}^{k,U^{i_k}_k}.
\end{equation*}
In words: it is the particle indexed by $(k,i_k)$, i.e., $X_{\cdot}^{k,i_k}$, which hits the boundary and then jumps on another, randomly chosen particle inside.
Let the conditional joint distribution of
\begin{equation*}
X_t^{k+1,1},\dotsc,X_t^{k+1,n},\quad t\geq \tau_k,
\end{equation*}
given
$\set{X_t^{j,m},\, 0\leq t \leq \tau_j, 1\leq m\leq n}$, $j\leq k$, and $\set{U_k^i,\, 1\leq i\leq n,\, k\geq 1}$,
be that of $n$ independent processes with transition probabilities of $B$, starting from $X_{\tau_k}^{k+1,m}$, $1\leq m\leq n$.
Let 
\begin{equation*}
\tau_{k+1}=\inf\set{t>\tau_k:\exists_{1\leq i\leq n}\, X_t^{k+1,i}\in \Lambda^c}.
\end{equation*}
We define $\X^n_t:=(X^1_t,\dotsc,X^n_t)$ by
\begin{equation*}
X_t^m=X_t^{k,m},\qquad \text{  for  } \tau_{k-1}\leq t<\tau_k,\  k\geq 1,\ 
m=1,2,\dotsc,n.
\end{equation*}
Note that the process $\X^n$ is well defined only up to the time
\begin{equation*}
\tau_\infty :=\lim_{k\to\infty} \tau_k.
\end{equation*}

We will say that $X^k$ experiences branching on the interval $[s_1,s_2]$ if some other process $X^m$ jumps from $\prt \Lambda$ to $X^k_s$ at a time $s\in[s_1,s_2]$. Note that the particle that is jumping is not considered to experience branching at that time.

If $B$ is Brownian motion then
the processes $X^m$, $m=1,\dots,n$, are ``driven'' by independent copies $B^m$ of $B$ that can be distilled from $X^m$'s in the following way:
\begin{align}\label{a22.1}
B^m_t= X^m_t - X^m_0 - \sum_{\tau_i \leq t} \left(X^m_{\tau_i} - X^m_{\tau_i-}\right), \qquad t\geq 0.
\end{align}

\subsection{Dynamical historical processes and spine}\label{ssec:DHPs}

Heuristically speaking,
for each $k\in\set{1,\dotsc,n}$, 
the ``dynamical historical process''
$\{H^k_t(s), 0\leq s\leq t\}$ (to be defined rigorously below) represents
the unique path in the branching structure of the Fleming-Viot process which goes from $X^k_t$ to one of the points $X^1_0, \dots, X^n_0$ along the trajectories of $X^1, \dots, X^n$ and does not jump at times $\tau_k$.

Let $\cA$ be the family of  all sequences of the form $((a_1,b_1), (a_2,b_2), \dots, (a_k,b_k))$, where $a_i \in \{1,\dots, n\}$ and $b_i \in \N$ for all $i$.
If $\bfa=((a_1,b_1), (a_2,b_2), \dots, (a_k,b_k))$ then we will write $\bfa + (m,r)$ to denote
$((a_1,b_1), (a_2,b_2), \dots, (a_k,b_k), (m,r))$. 
We will define a function
$\cL :  \{1,\dots, n\} \times [0,\tau_\infty) \to \cA$.
We interpret $\cL(i,s)$ as a label of $X^i_s$ so, by abuse of notation, we will write $\cL(X^i_s)$ instead of $\cL(i,s)$.
We let $\cL(X^i_s)=((i,0))$ for all $0\leq s < \tau_1$ and $1\leq i\leq n$. If $\cL(X^i_s)=\bfa$ for $\tau_{k-1} \leq s < \tau_k$,
$i \ne i_k$ and $i \ne U^{i_k}_k $ then we let $\cL(X^i_s)=\bfa$ for $\tau_{k} \leq s < \tau_{k+1}$. Suppose that $i = U^{i_k}_k $ and  $\cL(X^i_s)=\bfa$ for $\tau_{k-1} \leq s < \tau_k$. Then we let
$\cL(X^i_s)=\bfa+(i, k)$ and $\cL(X^{i_k}_s)=\bfa+(i_k,k)$ for $\tau_{k} \leq s < \tau_{k+1}$.

Suppose that $\cL(X^\ell_t)=((a_1,b_1), (a_2,b_2), \dots, (a_k,b_k))$ 
for some 
$k\geq 1$. 
The assumption (A1) on the driving process $B$ implies that $X^\ell$ will ``hit'' $\Lambda^c$ at some time greater than $t$ (more precisely, $\ell =i_j $ for some $j> b_k$) with probability 1.
Before that time, it may also happen that some other $X^i$ will jump onto $X^\ell$; more precisely, it may happen that $\ell =U^{i_j}_j $ for some $j> b_k$. Let $\tau'$ be the minimum of all such times.
From the definition of $\cL$ we easily infer that 
$0=b_1<b_2<\dotsc<b_k$ and $\tau_{b_k}\leq t$, so that 
$0<\tau_{b_1}<\dotsc<\tau_{b_k}\leq t<\tau'$. For 
$\tau_{b_m}\leq s<\tau_{b_{m+1}}$ with $1\leq m<k$ we let 
 $H^\ell_t(s)=X^{a_m}_s$, and for $\tau_{b_k}\leq 
s\leq t$ we 
let $H^\ell_t(s)=X^{a_k}_s$.

We will call $\{H^\ell_t(s), 0\leq s\leq t\}$ a \emph{dynamical historical 
process} (DHP) corresponding to $X^\ell_t$. Note that $H^\ell_t$ is defined 
 for $1\leq \ell \leq n$ and $0\leq t < \tau_\infty$.

The spine process will be defined below the statement of Theorem \ref{thm:uniquespine}. Roughly speaking, the spine is the unique DHP that extends from time 0 to time $\tau_\infty$.
The existence and uniqueness of the spine
was proved in \cite[Thm.~4]{GK}  under very
restrictive assumptions on the driving process $B$ and under the assumption that
the lifetime $\tau_\infty$ is infinite. It was proved in \cite{BiBu} that
the claim holds under  minimal reasonable assumptions, that
is, the strong Markov property of the driving process and non-atomic
character of the exit time distributions. 

\begin{theorem}\label{thm:uniquespine}
	Fix some $n\geq 2$, suppose that $B$ satisfies assumptions (A1)-(A2) and $\X^n_0 \in \Lambda^n$, a.s. Then, a.s.,
	there exists a unique infinite sequence $((a_1,b_1), (a_2,b_2), \dots)$ such that its every finite initial subsequence is equal to $\cL(X^i_s)$ for 
	some $1\leq i \leq n$ and $s\geq 0$. 
	
\end{theorem}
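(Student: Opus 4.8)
\emph{The plan} is to reinterpret the statement as a fact about a forest: let $\calT\subset\cA$ be the random set of all realized labels, i.e.\ $\bfa\in\calT$ iff $\bfa=\cL(X^i_s)$ for some $1\le i\le n$ and $s\in[0,\tau_\infty)$. From the construction of $\cL$ one checks that $\calT$ is prefix-closed — a label of length $\ge2$ is created at a branching time as one of the two children of the label carried just before that time by the selected particle, so deleting its last pair yields another realized label — that its roots are exactly $((1,0)),\dots,((n,0))$, that every node has at most one parent and at most two children, and that the two children of a node appear simultaneously, at the branching at which that node labels the selected particle. An infinite sequence $((a_1,b_1),(a_2,b_2),\dots)$ has all its finite initial subsequences in $\calT$ iff it is an infinite ray of $\calT$ starting at a root, so the theorem asserts precisely that $\calT$ has, a.s., exactly one infinite ray. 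Existence is the easy half: by induction $\tau_k<\infty$ for every $k$, a.s., since at time $\tau_k$ the configuration again lies in $\Lambda^n$ (the jumping particle lands on an interior particle) and by (A1) the $n$ copies of $B$ restarted there exit $\Lambda$ in finite, strictly positive time; hence there are infinitely many branchings, each adding new labels, so $\calT$ is an infinite, locally finite forest with finitely many roots, and K\"onig's lemma yields an infinite ray.

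\emph{For uniqueness}, I would argue by contradiction. Suppose $\calT$ had two distinct infinite rays and let $\bfa$ be their longest common initial segment (possibly empty); the rays then continue through two distinct children $\mathbf c_1\neq\mathbf c_2$ of $\bfa$ — or, if $\bfa$ is empty, through two distinct roots — and the subtree of $\calT$ rooted at each of $\mathbf c_1,\mathbf c_2$ is infinite. Call a node \emph{immortal} if its subtree is infinite. The nodes $\mathbf c_1,\mathbf c_2$ are created at one instant $t_0\in\{0,\tau_1,\tau_2,\dots\}$ and belong to the antichain $\{M_1,\dots,M_n\}$ of the $n$ labels carried at time $t_0$. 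For $t\ge t_0$ set $N^i_t=\#\{\text{particles alive at }t\text{ whose label extends }M_i\}$; then $N^i_{t_0}=1$, $\sum_iN^i_t\equiv n$, and at each later branching either nothing changes or, for the distinct indices $a,a'$ such that $M_a$ is extended by the label of the selected particle and $M_{a'}$ by the label of the boundary-hitter just before that branching, $N^a$ goes up by $1$ and $N^{a'}$ down by $1$; also $0$ is absorbing for each $N^i$, since a particle can only be selected out of a currently nonempty clan, and a short argument gives that $M_i$ is immortal iff $N^i_t\ge1$ for all $t<\tau_\infty$. Using the strong Markov property to restart the process at $t_0=\tau_k$, the contradiction — hence uniqueness — will follow from the single statement:

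\emph{Fixation:} for the process started from any $\X^n_0\in\Lambda^n$, a.s.\ at most one of the $n$ founder clans $((i,0))$ is immortal.

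\emph{To prove Fixation}, note that since $\calT$ is infinite and the founder clans partition it, at least one is immortal; together with Fixation this forces exactly one, and then applying Fixation at time $0$ and — via the strong Markov property — at every branching time makes the immortal nodes of $\calT$ a single ray, necessarily the unique infinite ray, since every infinite ray passes only through immortal nodes. I would prove Fixation by showing that $Z_t:=\#\{i:N^i_t\ge1\}$, the number of surviving founder clans — non-increasing and $\{1,\dots,n\}$-valued — tends to $1$ a.s. The source of mixing is the family $\{U^i_k\}$: at branching $\tau_m$ the reproducing particle $U^{i_m}_m$ is a fresh uniform pick, independent of the driving motions and of $\calF_{\tau_m-}$, hence uniform over the $n-1$ particles other than the boundary-hitter $i_m$. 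So a clan of size one is fragile — when its sole member hits $\partial\Lambda$ (in finite time, a.s., by (A1)) the reproducing particle comes from another clan and the clan dies, unless it grew first, which needs its lone member to be selected beforehand. Iterating this, one should be able to bound below, conditionally on $\calF_{\tau_m-}$ and by a constant depending only on $n$, the probability that whenever $\ge2$ clans are present $Z$ strictly decreases within finitely many more branchings; combined with there being infinitely many branchings and $Z$ bounded and non-increasing, a conditional Borel--Cantelli argument gives $Z_t\to1$. The hard part will be exactly this last estimate: the vector of clan counts is not a Markov chain — which particle hits $\partial\Lambda$ at a branching is dictated by the driving motions, not an independent coin — so one cannot merely cite absorption for a finite Markov chain, and one must argue directly, following how a small clan may grow temporarily before being extinguished by the uniform reproduction mechanism. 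The remaining pieces (the forest structure, K\"onig's lemma, the strong Markov reduction) are routine.
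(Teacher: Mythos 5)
The paper does not prove Theorem~\ref{thm:uniquespine}: the statement is cited from \cite{BiBu} (and, under stronger hypotheses, from \cite[Thm.~4]{GK}), so there is no in-paper argument to compare against; the review below therefore only assesses your proposal on its own terms.

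Your reformulation and reductions are correct. Viewing the realized labels as a forest $\calT$, the theorem is indeed the assertion that $\calT$ has exactly one infinite ray; $\calT$ is prefix-closed, has $n$ roots, is locally finite, and is infinite a.s.\ (by (A1)--(A2) and $\X^n_0\in\Lambda^n$ there are infinitely many $\tau_k$), so K\"onig's lemma gives existence. Your reduction of uniqueness, via the strong Markov property at branching times, to the ``Fixation'' claim that a.s.\ at most one of the $n$ founder clans is immortal is also correct, and you correctly identify the central obstruction: the clan-count vector is not a Markov chain because the boundary-hitting index $i_k$ is determined by the diffusive motions, not by an independent uniform draw.

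The gap — which you flag yourself — is in the proof of Fixation, and it is genuine. You propose to lower-bound, conditionally on $\calF_{\tau_m-}$ and by a constant depending only on $n$, the probability that $Z$ strictly decreases within finitely many further branchings. No such $\calF_{\tau_m-}$-uniform bound exists. First, the conditional probability that the \emph{next} boundary-hitter lies in a prescribed clan can be made arbitrarily small by placing that clan's particles deep inside $\Lambda$ while the other $n-1$ are arbitrarily close to $\partial\Lambda$. Second, the number $J$ of branchings elapsing before the sole member of a size-one clan hits $\partial\Lambda$ is unbounded and is not independent of the selections $U^i_k$ (the selections govern where particles jump and hence when they next hit), so the event ``the lone member is never selected as a reproducer before it hits'' cannot be estimated below by a fixed power of $1-\tfrac1{n-1}$; a coupling argument shows the probability is $\E\bigl[(1-\tfrac1{n-1})^{J'}\bigr]$ for a related random variable $J'$ that can be arbitrarily large with high conditional probability. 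So a direct conditional Borel--Cantelli argument of the shape you sketch does not close. Filling the gap requires a different device — e.g.\ a conditional second Borel--Cantelli applied along the random subsequence of branchings whose hitter lies in a fixed founder clan (using (A1) to see that such branchings recur a.s.\ while the clan is nonempty) together with a careful handling of the growth in between, or a supermartingale in the clan sizes whose drift is insensitive to which particle hits. As written, the last and hardest step of the proof is acknowledged but not supplied.
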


In the notation of the theorem, we define the spine of $\X^n$ by 
$J^n(s) = X^{a_m}_s$ for $\tau_{b_m}\leq s<\tau_{b_{m+1}}$, 
$m\geq 1$. We will write $\chi(s) = a_m$.

\subsection{Brownian motion-driven Fleming-Viot process}\label{de2.1}

From now on we will assume that the driving process $B$ is Brownian motion in $\R^d$.
We will assume that $\Lambda\subset \R^d$ is an open bounded connected Lipschitz domain with 
the Lipschitz constant less than 1. This means that every point in $\prt \Lambda$ has a neighborhood where $\prt \Lambda$ can be represented as the graph of  a Lipschitz function with the Lipschitz constant less than 1 in some orthonormal coordinate system.
Under these assumptions  $\tau_\infty= 
\infty$, a.s. (see \cite{BBF, GK}). We mention parenthetically that if the driving process is Brownian 
motion, $\Lambda$ is a polytope and $n=2$  then we also have $\tau_\infty= 
\infty$, a.s. (see \cite{BBF}). However, it was proved in \cite{BBP} that $\tau_\infty 
< \infty$, a.s., for every $n$, for some Fleming-Viot processes 
driven by one-dimensional diffusions. 

\subsection{Dynamical historical process as  a Fleming-Viot process}\label{DHP}

Let $C([0,t],\Gamma)$ denote the space of continuous functions with values in $\Gamma$, with the supremum norm. For a function $f: C([0,t],\Gamma)\to \R$, let
\begin{align}\label{m29.4}
\cH^n_t(f)&=\frac{1}{n}\sum_{k=1}^n f(H^k_t).
\end{align}

Let $\mu_n=\frac 1 n\sum_{k=1}^n \delta_{X^k_0}$, i.e., $\mu_n$ denotes the empirical distribution of $\X_0^n$.

Recall  definition \eqref{m29.1} and let 
\begin{align}\label{m29.3}
\wt \P_t^\mu(A) = \P\left(\{B_s, 0\leq s \leq t\} \in A\mid \tau_\Lambda > t\right),\qquad A \subset C([0,t],\Lambda),
\end{align}
assuming that $\P(B_0\in A_1)=\mu(A_1)$ for $A_1\subset \Lambda$. 
Note that this does not imply that $\wt \P_t^\mu(B_0\in A_1)=\mu(A_1)$ for $A_1\subset \Lambda$.

In the case when $\mu=\mu_n$, we will write $\wt \P_t$ instead of $\wt \P_t^{\mu_n}$.
The corresponding expectations will be denoted $\wt \E_t$ and $\wt \E_t^\mu$. We will write $\wt\E_t(f )$ instead of $\wt\E_t(f(\{B_s, 0\leq s \leq t\}) )$.

The following theorem is a corollary of \cite[Thm.~2.2]{Villemonais13}. 

\begin{theorem}\label{a3.2}
	For $t\geq 0$ and any measurable function $f: C([0,t],\Gamma)\to \R$ with $\|f\|_\infty\leq 1$,
\begin{align*}
\E
\left|\cH_t^n(f) -\wt\E_t(f )\right|
\leq  2\left(1+\sqrt{2}\right)
 \left(\E\left( \P_{\mu_n} (\tau_\Lambda > t)^{-2}\right)\right)^{1/2}
n^{-1/2},
\end{align*}
where $\P_{\mu_n}$ represents the distribution of the driving process $B$ with the initial distribution $\mu_n$. 
\end{theorem}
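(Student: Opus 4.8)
The plan is to deduce this from the quantitative bound in \cite[Thm.~2.2]{Villemonais13}, which controls the $L^1$-distance between the empirical measure of a Fleming--Viot (Moran-type) particle system and the conditioned law of the driving process. The first step is to set up the correct ``driving process'' to which Villemonais' theorem is applied. Rather than applying it to the $\R^d$-valued Brownian particles directly, I would apply it to the \emph{path-valued} Markov process: for a fixed horizon $t$, let the auxiliary process be $Y_s = \{B_u, 0\le u\le s\}$ with state space $\bigcup_{s\le t} C([0,s],\Gamma)$ (equivalently, freeze the path after time $t$), absorbed when $B$ exits $\Lambda$. This path-valued process is again strong Markov with non-atomic killing times, the Fleming--Viot system built over it is exactly the dynamical historical process $(H^1_t,\dots,H^n_t)$, and its empirical measure is precisely $\cH^n_t$. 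This is the observation, attributed above to \cite{BiBu}, that Villemonais' estimate applies not just to positions at time $t$ but to whole genealogical trajectories. The conditioned law of $Y_t$ started from $\mu_n$ and conditioned on non-absorption up to time $t$ is, by \eqref{m29.3}, exactly $\wt\P_t = \wt\P_t^{\mu_n}$.

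The second step is to quote Villemonais' bound in the form it takes for our $\|f\|_\infty \le 1$ test functions. His Theorem 2.2 gives, for the $n$-particle Fleming--Viot system with initial empirical law $\mu_n$,
\begin{equation*}
\E\left|\frac{1}{n}\sum_{k=1}^n f(H^k_t) - \wt\E_t^{\mu_n}(f)\right|
\le \frac{2(1+\sqrt{2})\,\|f\|_\infty}{\sqrt{n}}\,
\left(\E\left[\P_{\mu_n}(\tau_\Lambda > t)^{-2}\right]\right)^{1/2},
\end{equation*}
where the outer expectation on the right accounts for the randomness of $\mu_n$ (if $\X^n_0$ is random); when $\mu_n$ is deterministic this is just $\P_{\mu_n}(\tau_\Lambda > t)^{-1}$. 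Plugging in $\|f\|_\infty \le 1$ and writing $\cH^n_t(f)$ for the empirical average via \eqref{m29.4} yields the stated inequality verbatim. The constant $2(1+\sqrt{2})$ is exactly the one appearing in Villemonais' estimate (coming from an $L^2$ martingale computation on the jump times of the particle system together with a Gronwall-type argument), so no reworking of constants is needed.

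The main obstacle — and the only real content beyond citation — is verifying that the hypotheses of \cite[Thm.~2.2]{Villemonais13} are met by the path-valued driving process. One must check: (i) that $Y$ is a well-defined strong Markov process on the path space with the supremum metric (right-continuity of $s\mapsto Y_s$ into $C([0,t],\Gamma)$ follows from continuity of $B$); (ii) that the absorption time of $Y$ coincides with $\tau_\Lambda$ and has no atoms, which is exactly assumption (A2) transported to the path process; (iii) that $\P(\tau_\Lambda > t) > 0$, which holds for each fixed $t$ under (A1) since $\P(\tau_{\Lambda,0} \in (0,\infty)) = 1$ forces $\P(\tau_\Lambda > t)>0$ for small $t$, and for Brownian motion in the open set $\Lambda$ for every $t$; and (iv) that the Fleming--Viot construction over $Y$ — kill a path when it exits, resample uniformly from the other $n-1$ paths, continue — is literally the DHP construction of Section \ref{ssec:DHPs}, so that the labels $\cL$ and the resampling variables $U^i_k$ match Villemonais' mechanism. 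Granting these identifications, the bound is immediate. I would present (i)--(iv) as short remarks and then state that Theorem \ref{a3.2} is Villemonais' estimate applied to $Y$.
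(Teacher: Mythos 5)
Your proposal is correct and follows essentially the same route as the paper: the paper's proof is a two-sentence citation that the DHP is a (space-time, time-homogeneous) Fleming--Viot process driven by the path-valued version of $B$ (attributed, as you do, to the proof of Theorem~4.2 in \cite{BiBu}), after which the estimate is read off from \cite[Thm.~2.2]{Villemonais13}. Your additional verifications (i)--(iv) only unpack what that citation implicitly relies on; they are accurate but do not constitute a different argument.
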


\begin{proof}
It has been shown in the proof of \cite[Thm. 4.2]{BiBu} that DHP can be identified with a space-time time-homogeneous Fleming-Viot process. Hence, our theorem follows directly from 
 \cite[Thm. 2.2]{Villemonais13}.
\end{proof}

\subsection{Conditioned Brownian motion}
A major monograph discussing conditioned Brownian motion is \cite{Doob}. The topic and the book are rather technical so the reader may find the presentation of the basic facts about conditioned Brownian motion and conditioned space-time Brownian motion in the introduction of \cite{BS} more accessible.

Recall that $\Lambda\subset \R^d$ is a bounded Lipschitz domain.
Let $\vphi>0$ denote the first eigenfunction of $(-\frac 1 2) \Delta$, where  $\Delta$ denotes the Dirichlet Laplacian in $\Lambda$. Let $\lambda>0$ be the corresponding eigenvalue. Then the space-time Brownian motion $(B_t,t)$ conditioned by the parabolic function $h(x,t)= e^{\lambda t} \vphi(x)$ stays in $\Lambda \times \R$ forever. The spatial component of this process can be considered to be ``Brownian motion  conditioned to stay in $\Lambda$ forever'' because it is the  weak limit, as $t\to \infty$, of Brownian motions conditioned not to exit $\Lambda$ in $[0,t]$ (see \cite{Pin}).
We will use $\wh \P^\mu$ to denote the distribution of Brownian motion  conditioned to stay in $\Lambda$ forever, with the initial distribution 
$c \vphi(x)\mu(dx)$, where $c>0$ is the normalizing constant. 

\section{Weak convergence of spines}\label{sec:main}

Recall that we assume that the driving process $B$ is Brownian motion 
and $\Lambda\subset \R^d$ is an open bounded connected Lipschitz domain with 
the Lipschitz constant less than 1. 

Our main result is as follows.

\begin{theorem}\label{a1.7}
Suppose that $\mu$ is a probability measure supported in a set $\Lambda_1\subset \Lambda$ such that $\dist(\Lambda_1, \Lambda^c) >0$.
Consider a sequence of Fleming-Viot processes $\X^n$ in $\Lambda$ driven by Brownian motion.
Assume that the measures $\mu_n=\frac 1 n\sum_{k=1}^n \delta_{X^k_0}$ are supported in $\Lambda_1$ and converge weakly to $\mu$ as $n\to \infty$. Then the distributions of spines $\{J^n_t, t\geq 0\}$ converge  to $\wh \P^\mu$.
\end{theorem}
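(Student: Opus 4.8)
The plan is to follow the two-part strategy outlined in Section~\ref{j30.1}, combining a uniform-in-initial-distribution convergence result for conditioned Brownian motions with a convergence result for spines onto a suitable Brownian bridge. Fix $\eps>0$ and $s>0$; it suffices to show that the distance (in the Prohorov metric on $C([0,s],\Lambda)$) between the law of $\{J^n_t, 0\le t\le s\}$ and $\wh\P^\mu$ is eventually below a constant multiple of $\eps$. First I would establish step (A), the version of Pinsky's theorem in Lemma~\ref{a14.3}: for all sufficiently large $t$, the law on $[0,s]$ of Brownian motion conditioned to stay in $\Lambda$ up to time $t$ and to have \emph{any} distribution $\nu$ at time $t$ is within $\eps$ of $\wh\P^\mu$ on $[0,s]$, with the bound uniform over all probability measures $\nu$ on $\Lambda$. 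The uniformity here is the crucial feature — it is what allows us to get away with not knowing the actual time-$t$ marginal of the spine. This is a deterministic PDE/potential-theory statement: the heat semigroup killed on $\prt\Lambda$ has a spectral gap, so $e^{\lambda t}p_\Lambda(t,x,y)\to\vphi(x)\vphi(y)/\|\vphi\|^2$ uniformly, and the reversal/bridge decomposition turns this into uniform closeness of the conditioned laws restricted to $[0,s]$.

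Next I would carry out step (B). Fix a large time $t$ (chosen so that (A) applies) and discretize $\Lambda$, away from $\prt\Lambda$, into small cubes $Q$. For a cube $Q$ at positive distance from $\prt\Lambda$, Theorem~\ref{a3.2} (the quantitative Villemonais estimate applied to the dynamical historical process) shows that for large $n$ the empirical distribution of the genealogies $\{H^k_t : X^k_t\in Q\}$ is close to $\wt\P^{\mu_n}_t(\,\cdot\mid B_t\in Q)$, i.e., to Brownian motion conditioned to stay in $\Lambda$ on $[0,t]$ and land in $Q$. The key additional point — argued heuristically in Section~\ref{j30.1} — is that, \emph{conditionally on the positions of all particles at time $t$ and at a slightly later time $t+\Delta t$}, the spine particle among those in $Q$ is almost uniformly distributed: the Brownian bridges connecting the two time slices assign nearly equal weight to every matching, by the explicit Gaussian bridge density and the fact that over a short time $\Delta t$ particles in a small cube cannot have moved far. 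Here one must also control: (i) the (negligible) probability of a branching event touching a particle of $Q$ during $[t,t+\Delta t]$; (ii) the (negligible) probability that a particle starting in $Q$ hits $\prt\Lambda$ during $[t,t+\Delta t]$, using $\dist(Q,\prt\Lambda)>0$; and (iii) the observation that conditioning the spine to pass through $Q$ at time $t+\Delta t$ is an event measurable with respect to the evolution on $[t+\Delta t,\infty)$ and hence does not distort the law of the trajectories on $[0,t)$. Summing over cubes $Q$ and averaging against the (unknown) spine marginal $\nu^n_t$ at time $t$, I get that the law of $\{J^n_t,0\le t\le t\}$ restricted to $[0,t]$ is, for $n\ge n_0(\eps,t)$, within $\eps$ of $\int \wt\P^{\mu_n}_t(\,\cdot\mid B_t\in dy)\,\nu^n_t(dy)$, which is exactly Brownian motion conditioned to stay in $\Lambda$ on $[0,t]$ with terminal law $\nu^n_t$.

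Finally I would splice the two steps: restrict everything to $[0,s]\subset[0,t]$, invoke (A) with $\nu=\nu^n_t$ (legitimate precisely because (A) is uniform in $\nu$), and conclude that for all large $n$ the law of the spine on $[0,s]$ is within $2\eps$ of $\wh\P^\mu$; since $s$ and $\eps$ are arbitrary and the processes live on $C([0,\infty),\Lambda)$ with the topology of uniform convergence on compacts, this gives weak convergence of $\{J^n_t,t\ge0\}$ to $\wh\P^\mu$. To promote finite-dimensional/compact-time convergence to convergence of laws on path space one needs tightness, which follows from the fact that each $J^n$ is locally a Brownian trajectory (by \eqref{a22.1}, the spine moves as Brownian motion between branching times and is continuous across them) together with a uniform control on how often the spine comes near $\prt\Lambda$ — the "no trajectory stays near $\prt\Lambda$ for long" estimate mentioned in Section~\ref{j30.1}, which also feeds the treatment of the boundary cubes in step (B).

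\medskip

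\textbf{Main obstacle.} I expect the hardest part to be the near-uniformity of the spine's choice within a small cube in step (B): turning the heuristic about Gaussian bridges into a rigorous bound requires simultaneously controlling the short-time evolution on $[t,t+\Delta t]$ (branching, boundary hits, and the comparison of bridge weights for all permutations of the finitely many particles in $Q$) and then letting $\Delta t\to0$, $n\to\infty$, cube size $\to 0$ in the correct order. Coupled with this is the bookkeeping needed so that the error in Theorem~\ref{a3.2} — which carries the factor $\bigl(\E\,\P_{\mu_n}(\tau_\Lambda>t)^{-2}\bigr)^{1/2}$ — stays uniformly bounded; this requires a lower bound on the survival probability $\P_{\mu_n}(\tau_\Lambda>t)$ that is uniform in $n$, which is where the hypothesis $\dist(\Lambda_1,\Lambda^c)>0$ and the weak convergence $\mu_n\to\mu$ enter.
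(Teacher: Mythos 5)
Your proposal is correct and follows essentially the same strategy as the paper's proof: the two-step decomposition into (A) a uniform-in-terminal-law version of Pinsky's theorem (Lemma \ref{a14.3}) and (B) a small-cube exchangeability argument combining Villemonais' quantitative estimate (Theorem \ref{a3.2}) with the Gaussian bridge comparison (Lemma \ref{a2.5}), together with the near-boundary and branching controls (Lemma \ref{a3.6}), is exactly what the paper carries out. The only technical nuance your outline glosses over is that the paper cannot use a fixed large time $t$ but must let $t_2$ depend on $n$ (chosen within $[u,u+1]$ so that the bad event $C_j$ has small probability), since the near-boundary estimate of Lemma \ref{a3.6} is only available at some carefully selected time rather than at every fixed time.
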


\begin{remark}
(i) We believe that the theorem holds for all bounded Lipschitz domains, not only those with the Lipschitz constant less than 1. At present it is not known whether the lifetime $\tau_\infty$ is finite for the Fleming-Viot process driven by Brownian motion in any Lipschitz domain. However, it is implicit in arguments in \cite{BHM00} that for every Euclidean domain $\Lambda$ (not necessarily Lipschitz), $\tau_\infty\to\infty$ in distribution as the number $n$ of particles goes to infinity. This is enough to extend Theorem \ref{a1.7} to all Lipschitz domains. We omit the proof of this stronger result to avoid another layer of technicalities. We cannot get rid of the assumption of the Lipschitz character of the domain $\Lambda$ because it is an essential ingredient in Lemma \ref{a11.4}.

(ii) The proof of Theorem \ref{a1.7} is based on a large number of estimates specific to Brownian motion. While the overall structure of the proof, outlined in Section \ref{j30.1}, is very general and, therefore, it could be applied to any driving Markov process, Brownian motion is the only process for which the estimates needed in the proof are readily available in the literature, to our best knowledge.

(iii) One can generate examples of driving processes  for which the theorem holds by ``relabeling'' the state space as follows. 
If $\Lambda'$ is a set and $\FF: \Lambda \to \Lambda'$ is a one-to-one function then $\{\FF(\X^n_t), t\geq 0\}$, $n\geq 1$, are Fleming-Viot processes with the state space $\Lambda'$, driven by the image of Brownian motion by $\FF$.
There are some technical details that need to be taken care of. We show how this can be done in some specific cases in Section \ref{sec: general}.

(iv)
We assumed that the measures $\frac 1 n\sum_{k=1}^n \delta_{X^k_0}$ are supported in $\Lambda_1\subset \Lambda$ such that $\dist(\Lambda_1, \Lambda^c) >0$ because, to apply Theorem \ref{a3.2} in our argument, we need the following bound: for each fixed $t>0$,
\begin{align}\label{a8.1}
\limsup_{n\to\infty}
\left(\E\left( \P_{\mu_n} (\tau_\Lambda > t)^{-2}\right)\right)^{1/2}
<\infty.
\end{align}
It is easy to see that for every fixed $t>0$, the function $x\to \P (\tau_\Lambda > t\mid B_0 = x)$ is continuous and strictly positive  inside $\Lambda$. Hence,  
$\inf_{x\in \Lambda_1} \P (\tau_\Lambda > t\mid B_0 = x) >0$. This implies \eqref{a8.1}.

The following  example shows that \eqref{a8.1} fails for some natural initial distributions.
Suppose that $\Lambda$ has a smooth boundary, $\mu$ is the uniform probability distribution in $\Lambda$
and $X^k_0$, $k=1,\dots,n$, are i.i.d. with the distribution $\mu$.
By an argument similar to that in Lemma \ref{a11.4},
\begin{align*}
\P(\tau_\Lambda > t \mid B_0 = x) \leq c_1 \dist(x,\prt \Lambda),
\qquad x\in \Lambda,
\end{align*}
where $c_1$ depends on  $\Lambda$ and $t$. 
It follows that
\begin{align*}
\E\left( \P_{\mu_n} (\tau_\Lambda > t)^{-2}\right)
&= \frac 1 {|\Lambda|} \int_\Lambda \P(\tau_\Lambda > t \mid B_0 = x)^{-2} dx
\geq \frac 1 {|\Lambda|} \int_\Lambda c_1^{-2} \dist(x,\prt \Lambda)^{-2} dx\\
&\geq c_2 \int_{0+}  s^{-2} ds=\infty. 
\end{align*}
Therefore, \eqref{a8.1} fails in this case.

(v) It is conceivable that for a fixed finite number of individuals, the spine of the Fleming-Viot process has the distribution of the process conditioned not to hit the boundary of the domain. Although this seems to be highly unlikely, proving that this is not the case does not seem to be easy. Apparently there are only two examples showing that this is not the case, one in \cite[Sect. 6]{BiBu} and another one in \cite{BuTa}. They both deal
with Fleming-Viot processes with only $n=2$ individuals. In the first case, the process has a finite state space; in the second case, the driving process is Brownian motion in $[0,\infty)$.

(vi) We will use an informal notation for  Radon-Nikodym derivatives. Here is a typical example: $\P(B'_t \in dx)/\P(B''_s\in dx)$. In this example $B'$ and $B''$ are processes with values in $\R^d$, and $x\in \R^d$. The ratio of two ``probabilities'' represents the value of the Radon-Nikodym derivative of the distribution of $B'_t$ on $\R^d$ with respect to the distribution of $B''_s$ on $\R^d$, evaluated at $x$. Most applications of this notation will be more complex than this simple example but the interpretation will remain the same.
\end{remark}

\begin{proof}[Proof of Theorem \ref{a1.7}]
The proof is based on a large number of estimates that are relegated to Section \ref{sec:est}.

It will suffice to show that for every fixed $t_1>0$,
the distributions of $\{J^n_t,  0\leq t\leq t_1\}$ converge weakly to 
$\wh \P^\mu$ truncated in the obvious way to the interval $[0,t_1]$. To see this, note that convergence of finite dimensional distributions on $[0,\infty)$ is implied by the same type of convergence on all compact subintervals of $[0,\infty)$. It follows from Proposition 1.5 in Chapter XIII in \cite{RevuzYor99} that tightness on $[0,\infty)$  is implied by  tightness on compact time intervals.  So we fix an arbitrary $t_1>0$.

Let $0 < \gamma<\infty$ be a constant satisfying the condition in Lemma \ref{a11.4}.
We will argue that there exist $\alpha, \delta,\xi >0$  satisfying the following conditions,
\begin{align}
&\alpha \leq (1/2 -2\delta+3\gamma \delta/4 )/(\gamma+2d), \label{a3.3}\\
&0<\xi< 2\alpha -3\delta/2  , \label{a7.1}\\
&\alpha>\delta.\label{a7.8}
\end{align}
Note that $d$ and $\gamma$ are fixed at this point.
Let $\delta_0>0$ be so small that $(1/2 -2\delta+3\gamma \delta/4 )/(\gamma+2d) \geq (1/4 )/(\gamma+2d)$ for $0<\delta \leq \delta_0$. Let $\alpha_0 = (1/4 )/(\gamma+2d)$. If $0<\delta\leq \delta_0$ and $0<\alpha\leq \alpha_0$ then \eqref{a3.3} holds. We let $\alpha= \min(\alpha_0,\delta_0)/2$ and $\delta = \alpha/2$ so that \eqref{a7.8} holds true. Finally we note that with this choice of $\alpha$ and $\delta$, $2\alpha -3\delta/2>0$ so we can find $\xi$ such that \eqref{a7.1} is satisfied.

Fix an arbitrarily small $\eps>0$.
Fix some $u=u(t_1,\eps)>t_1$ which is greater than $s_1$  in Lemma \ref{a11.4}, greater than $s_1$ in Lemma \ref{a11.1}, and greater than $s_1$ in Lemma  \ref{a14.3}.

We will now refer to the notation introduced before and in Lemma \ref{a3.6}. This includes $C_j$, an atypical event, and  $k_1$. Let  $n_1$ be as in Lemma \ref{a3.6}, relative to $\eps$ and $u$ fixed above. From now on we will consider only $n>n_1$. 
Let $t_2 = u + j n^{-2\alpha + \delta}$, where  $j\geq 0$ is the smallest integer such that $\P(C_j)\leq \eps$ in the notation of Lemma \ref{a3.6}. Note that $j$ and, therefore, $t_2$ depend on $n$. 
In the notation of Lemma \ref{a3.6},
$\Delta t = n^{-2\alpha + \delta}$, $k_1 = \lfloor 1/\Delta t\rfloor$ and $j\leq k_1$ so $t_2 \in[u,u+1]$.

Let  $t_3 = t_2 + n^{-2\alpha + \delta}$.

For $z= (z_1, \dots, z_d)\in \R^d$ let
\begin{align}\label{rev31.1}
Q(z,r) = \left\{(y_1,\dots,y_d)\in \R^d: \max_{1\leq k \leq d} |y_k - z_k|\leq r\right\}.
\end{align}
Let $\bQ$ be the family of all cubes $Q=Q((z_1, \dots, z_d), n^{-\alpha}/2)$ such that every $z_k$ is an integer multiple of $n^{-\alpha}$, and  
\begin{align}\label{m29.5}
\dist(Q, \Lambda^c) \geq  3 n^{-\alpha +3\delta /4}.
\end{align}

If $0< s < t$ and $\omega\in C[0,t]$ then $\omega|_{s}\in C[0,s]$ will denote the truncation of $\omega$ to the interval $[0,s]$.

Fix a continuous non-negative function $f:C[0,\infty) \to \R$ with $\|f\|_\infty \leq 1$, which depends only on the values of the process on $[0,t_1]$, i.e., if  $\omega', \omega'' \in C[0,\infty)$ and $\omega'|_{t_1} = \omega''|_{t_1}$  then $f(\omega') = f(\omega'')$. 
In the same spirit, we can apply $f$ to $\omega\in C[0,s]$ for any $s\geq t_1$. Assume that  $\wt \E_{t_1} (f) >0$. By Lemma \ref{rev20.1},  $\wt \E_{t_2} (f) >0$.

For $Q\in \bQ$, let 
$G_Q=\{\omega_{t_2} \in Q\}$  and $f_Q = f\bone_{G_Q}$.
Recall  definition \eqref{m29.4} and let
\begin{align}\label{m29.10}
A_1&=\bigcap_{Q\in \bQ}\left\{\left|\cH_{t_2}^n(\bone_{G_Q}) -
	\wt\P_{t_2}(G_Q) \right|
\leq n^{-\alpha d +\gamma(-\alpha+3\delta/4) -\delta}\right\},\\
A_2&=\bigcap_{Q\in \bQ}\left\{\left|\cH_{t_2}^n(f_Q) -
	\wt\E_{t_2}(f_Q )\right|
\leq n^{-\alpha d +\gamma(-\alpha+3\delta/4) -\delta}\right\}.\label{m31.1}
\end{align}

For a cube $Q\in\bQ$, let $\calM_Q=\{j: X^{j}_{t_2} \in Q \}$.

Let $|\calM_Q|$ denote the cardinality of $\calM_Q$.
We will use the notation $\calM_Q= \{i_1, \dots, i_N\}$.
Note that $N=N(Q)=|\calM_Q|$ is a random integer.  
By abuse of notation, $|\,\cdot\,|$ will  denote the Euclidean norm in \eqref{a2.3} below and later in the paper.
Recall \eqref{a22.1} and let  
\begin{align}
\label{a2.3}
A_3 &= \bigcap_{1\leq i \leq n} 
\left\{
\sup_{s,t\in[t_2,t_3]}\left |B^{i}_s-B^{i}_t\right| < 2 n^{-\alpha +3\delta /4}\right\},\\
A_4 &=A_1\cap A_2\cap A_3.\label{sd.1}
\end{align}

Let $\calF_t = \sigma\{\X^n_s,0\leq s\leq t\}$.
Let $\pi$ be a random (i.e., uniform) permutation of $\{1,\dots,n\}$, independent of $\X^n$ and
\begin{align*}
\calF_t^+ = \sigma\left\{\left(X^{\pi(1)}_s, X^{\pi(2)}_s,\dots,X^{\pi(n)}_s\right), s\geq t\right\}.
\end{align*}

Let  $\calG_1$ be the smallest $\sigma$-field generated by $\calF_{t_2}$, $\calF_{t_3}^+$, and  random sets $\bigcup_{j\in\calM_Q}\{X^{j}_{t_3}\}$ for all $Q\in\bQ$. The information in the $\sigma$-field $\calG_1$ includes  the information on the locations of all particles at all times $t\geq t_3$, but with labels of the particles missing (scrambled). For every $Q\in\bQ$, $\calG_1$ contains the information on the locations of $X^j_{t_3}$ for $j\in \calM_Q$, but once again with the information on the labels missing. As a consequence, $\calG_1$ contains the information about the location of the spine at time $t_3$, and, for every $Q\in\bQ$, on whether the spine is equal to $X^j_{t_3}$ for some $j\in \calM_Q$ but without specifying $j$.

In view of \eqref{m29.5}, if $j\in\calM_Q$, $Q\in \bQ$ and $A_3$ holds then $X^{j}$ does not hit $\prt \Lambda$ in the time interval $[t_2, t_3]$ and, therefore, it does not jump during this interval. 
Thus, 
the joint conditional distribution of  $\left\{ X^{i}_t - X^i_{t_2}, t_2\leq t \leq t_3\right\}$, $i\in\calM_Q$, given $A_4$ and $\calF_{t_2}$ is that of independent Brownian motions $\left\{ B^{i}_t-B^i_{t_2}, t_2\leq t \leq t_3\right\}$, $i\in\calM_Q$, conditioned by 
\begin{align*}
\bigcap_{i\in\calM_Q}
\left\{
\sup_{s,t\in[t_2,t_3]}\left |B^{i}_s-B^{i}_t\right| <2 n^{-\alpha +3\delta /4}\right\}.
\end{align*} 
Let
\begin{align*}
\Lambda'=\left\{(x^1,x^2,z^1,z^2)\in \Lambda^4: x^1,x^2\in Q, z^1,z^2\in\Lambda,
|x^m-z^\ell|  \leq 
n^{-\alpha +3\delta /4} \text{ for }m,\ell=1,2\right\}.
\end{align*}

We will explain how Lemma \ref{a2.5} implies that
\begin{align}\label{m29.7}
&\frac{\P\left(
\{X^{j}_{t_3} \in dz^1,X^{k}_{t_3} \in dz^2 , X^{j}_{t_2} \in d x^1, X^{k}_{t_2} \in d x^2\} \cap {A_4} \mid \calF_{t_2} \right)}
{\P\left(
\{X^{k}_{t_3} \in dz^1,X^{j}_{t_3} \in dz^2 ,  X^{j}_{t_2} \in d x^1, X^{k}_{t_2} \in d x^2 \} \cap {A_4}\mid \calF_{t_2}\right) }
\leq \exp\left( 4\sqrt{d}  n^{ -\delta/4}\right),
\end{align}
for all $(x^1,x^2,z^1,z^2)\in\Lambda'$,   $Q\in \bQ$, $ j,k \in \calM_Q$, and sufficiently large $n$. Processes $X^j$ and $X^k$ play the same role as $B'$ and $B''$ in Lemma \ref{a2.5}. 
The events $F(\,\cdot\,)$ that appeared in Lemma \ref{a2.5} are replaced (implicitly) in \eqref{m29.7} by $A_3$ because $A_3\subset A_4$. 
Compared to Lemma \ref{a2.5}, there is extra conditioning in \eqref{m29.7}, specifically on  the event $A_1\cap A_2\subset A_4$. The extra conditions are independent of the trajectories of $X^j$ and $X^k$ on the interval $[t_2,t_3]$ given $\calF_{t_2}$.

Let 
\begin{align}\label{rev30.2}
\Lambda''=\Lambda''(Q)=\{(z^1, \dots,z^N) \in \Lambda^N: |x-z^j|  \leq 
n^{-\alpha +3\delta /4} \ \text{for}\ j=1,\dots,N,\ x\in Q\}.
\end{align}

Consider  $x^1, \dots, x^N \in Q$ and  $(z^1, \dots,z^N) \in \Lambda''$.
We will argue that for any $j_1,j_2\in \calM_Q$, and sufficiently large $n$,
\begin{align}\notag
&\P\Big(
\{X^{j}_{t_2} \in d x^j,  j \in\calM_Q \}\cap \{
X^{j_1}_{t_3} \in  dz^{j_2} \}\cap \{
X^{j_2}_{t_3} \in  dz^{j_1} \}\\
&\qquad\qquad \cap \{
X^{j}_{t_3} \in  dz^j,  j \in\calM_Q\setminus\{ j_1, j_2\}
\} \cap {A_4} \mid \calF_{t_2} \Big)\notag\\
&\times \left[\P\left(
\{X^{j}_{t_2} \in d x^j,  j \in\calM_Q \}\cap \{
X^{j}_{t_3} \in  dz^j,  j \in\calM_Q\} \cap {A_4}
 \mid\calF_{t_2}\right)\right]^{-1} \notag \\
&\qquad\leq  \exp\left( 4\sqrt{d}  n^{ -\delta/4}\right).
\label{m29.8}
\end{align}
The last formula is a more elaborate version of \eqref{m29.7}.
Indices $j_1$ and $j_2$ in \eqref{m29.8} play the same role as $j$ and $k$ in \eqref{m29.7}.
 Processes
$X^{j}$ with $  j \in\calM_Q\setminus\{ j_1, j_2\}$ are also included in 
\eqref{m29.8}. This does not affect the validity of the formula because the extra conditioning related to these processes is the same in the numerator and denominator (recall that  Brownian motions $B^i$, $1\leq i \leq n$, are jointly independent). 

Consider once again $x^1, \dots, x^N \in Q$ and  $(z^1, \dots,z^N) \in \Lambda''$. 
Suppose that
\begin{align*}
\calM_Q&= \{j_1, \dots, j_N\},\\
\{x^1,x^2,\dots,x^N\}&=\{X^{j_1}_{t_2}, X^{j_2}_{t_2},\dots,X^{j_N}_{t_2}\},\\
\qquad \{z^1,z^2,\dots,z^N\}&=\{X^{j_1}_{t_3}, X^{j_2}_{t_3},\dots,X^{j_N}_{t_3}\}.
\end{align*}
For $1\leq i\leq N$,
let $\Theta(i)=r$ if there is $j_k$ such that $X^{j_k}_{t_3} = z^i$ and  
$X^{j_k}_{t_2} = x^r$. Note that $\Theta$ is a permutation of $\{1,\dots,N\}$.

Consider $1\leq i, r_1,r_2 \leq N$. Let $\Pi_\ell$ be the set of permutations $\sigma$ of $\{1,\dots,N\}$ such that $\sigma(i) = r_\ell$, for $\ell=1,2$. If $\sigma\in\Pi_1$ and $\sigma(i_1)=r_2$ then we let $\calR(\sigma)$ be a permutation such that $\calR(\sigma)(i) = r_2$, $\calR(\sigma)(i_1)=r_1$, and $\calR(\sigma)(k)=\sigma(k)$ for $k\ne i,i_1$. It is easy to see that $\calR:\Pi_1\to\Pi_2$ is a bijection.
It follows from \eqref{m29.8} that for every $\sigma\in \Pi_1$,
\begin{align*}
&\frac
{\P\Big(\{\Theta=\sigma \} \cap {A_4} \mid \calG_1 \Big)}
{\P\Big(\{\Theta=\calR(\sigma) \} \cap {A_4} \mid \calG_1 \Big)}
\leq  \exp\left( 4\sqrt{d}  n^{ -\delta/4}\right).
\end{align*}
The conditioning $\sigma$-field has changed from $\calF_{t_2}$ to $\calG_1$,
relative to \eqref{m29.8}. The latter $\sigma$-field contains information about the set of locations of $X^i$'s at time $t_3$ for $i\in\calM_Q$, without information about which 
location corresponds to which process $X^i$. Hence, the last formula compares probabilities of different ``permutations'' or assignments of $X^i$'s to different locations. 

Summing over all $\sigma\in \Pi_1$ in the last formula yields for every $i=1,\dots,N$ and $r_1,r_2=1,\dots,N$,  
\begin{align}
&\frac
{\P\Big(\{\Theta(i)=r_1 \} \cap {A_4} \mid \calG_1 \Big)}
{\P\Big(\{\Theta(i)=r_2 \} \cap {A_4} \mid \calG_1 \Big)}
\leq  \exp\left( 4\sqrt{d}  n^{ -\delta/4}\right).
\label{rev22.3}
\end{align}

Recall \eqref{rev30.2} and let
\begin{align}\label{rev.m6.1}
& A_5^Q(z^1, \dots,z^N)
=\left\{ \bigcup_{j\in\calM_Q}\left\{X^{j}_{t_3}\right\}
=\left\{z^1, \dots,z^N\right\}
\right\},\\
&A_5^Q= \bigcup _{(z^1, \dots,z^N) \in \Lambda''(Q)}A_5^Q(z^1, \dots,z^N),
\label{rev30.7}\\
&A_6^Q =A_4\cap  A_5^Q,\label{rev30.8}
\end{align}
and note that the event $A_5^Q(z^1, \dots,z^N)$ concerns the equality of two unordered sets.
The event $A_6^Q$ is the intersection of $A_4$ and the event that  processes $X^j$ for $j\in \calM_Q$ stay within a  distance $n^{-\alpha +3\delta /4}$ from all points of $Q$ at time $t_3$.

If $A_5^Q(z^1, \dots,z^N)$ holds then for  $1\leq m\leq N$,  let $k(m)$ be such that $X^{k(m)}_{t_3} = z^m$.
It follows from \eqref{rev22.3} that  for any $1\leq m_1,m_2\leq N$ and $ j_1,j_2 \in \calM_Q$,  and sufficiently large $n$, 
\begin{align}\label{a19.4}
\frac
{\P\left(\left\{H^{k(m_1)}_{t_3}(t_2) = X^{j_1}_{t_2}
\right\}\cap A_4\cap A_5^Q(z^1, \dots,z^N) \mid \calG_1\right)}
{\P\left(\left\{H^{k(m_2)}_{t_3}(t_2) = X^{j_2}_{t_2}
\right\}\cap A_4\cap A_5^Q(z^1, \dots,z^N) \mid \calG_1\right)}
\leq  \exp\left( 4\sqrt{d}  n^{ -\delta/4}\right).
\end{align}
Hence 
\begin{align*} \notag 
&\P\left(\left\{H^{k(m_2)}_{t_3}(t_2) = X^{j_2}_{t_2}
\right\}\cap A_4\cap A_5^Q(z^1, \dots,z^N) \mid \calG_1\right)\\
&\qquad\geq  \exp\left( -4\sqrt{d}  n^{ -\delta/4}\right)
\P\left(\left\{H^{k(m_1)}_{t_3}(t_2) = X^{j_1}_{t_2}
\right\}\cap A_4\cap A_5^Q(z^1, \dots,z^N) \mid \calG_1\right), \notag \\
&\int_{\Lambda''} \P\left(\left\{H^{k(m_2)}_{t_3}(t_2) = X^{j_2}_{t_2}
\right\}\cap A_4\cap A_5^Q(z^1, \dots,z^N) \mid \calG_1\right)
dz^1\dots dz^N \notag \\
&\qquad\geq  \exp\left( -4\sqrt{d}  n^{ -\delta/4}\right) \notag \\
&\qquad \times\int_{\Lambda''} \P\left(\left\{H^{k(m_1)}_{t_3}(t_2) = X^{j_1}_{t_2}
\right\}\cap A_4\cap A_5^Q(z^1, \dots,z^N) \mid \calG_1\right)
dz^1\dots dz^N, \notag \\
&\P\left(\left\{H^{k(m_2)}_{t_3}(t_2) = X^{j_2}_{t_2} 
\right\}\cap A_6^Q \mid \calG_1\right)\\
&\qquad\geq  \exp\left( -4\sqrt{d}  n^{ -\delta/4}\right)
\P\left(\left\{H^{k(m_1)}_{t_3}(t_2) = X^{j_1}_{t_2}
\right\}\cap A_6^Q \mid \calG_1\right). \notag 
\end{align*}
The last inequality implies that
\begin{align*}
&|\calM_Q|\P\left(\left\{H^{k(m_2)}_{t_3}(t_2) = X^{j_2}_{t_2}
\right\}\cap A_6^Q \mid \calG_1\right)
=\sum_{1\leq m_1 \leq N}\P\left(\left\{H^{k(m_2)}_{t_3}(t_2) = X^{j_2}_{t_2}
\right\}\cap A_6^Q \mid \calG_1\right)\\
&\qquad\geq  \exp\left( -4\sqrt{d}  n^{ -\delta/4}\right)
\sum_{1\leq m_1 \leq N}
\P\left(\left\{H^{k(m_1)}_{t_3}(t_2) = X^{j_1}_{t_2}
\right\}\cap A_6^Q \mid \calG_1\right)\\
&\qquad=\exp\left( -4\sqrt{d}  n^{ -\delta/4}\right)
\P\left( A_6^Q \mid \calG_1\right)
=\exp\left( -4\sqrt{d}  n^{ -\delta/4}\right)
\bone_{A_6^Q}.
\end{align*}
We have shown that  for all $1\leq m\leq N$, $ j \in \calM_Q$ and sufficiently large $n$, 
\begin{align}
\P\left(\left\{H^{k(m)}_{t_3}(t_2) = X^{j}_{t_2}
\right\}\cap A_6^Q \mid \calG_1\right)
\geq \frac 1 {|\calM_Q|} \exp\left(- 4\sqrt{d}  n^{ -\delta/4}\right)
\bone_{A_6^Q}.\label{a24.2}
\end{align}

Suppose that $R$ is an $ \calF_{t_3}^+$-measurable random variable  
taking values in  $\{X^1_{t_3}, \dots, X^n_{t_3}\}$
and let
\begin{align}\label{rev.m6.2}
A_7^Q=\bigcup_{j\in \calM_Q}\{R=X^j_{t_3}\}.
\end{align}
By abuse of notation, if $R=X^j_{t_3}=z^m$, we will write $H^R_{t_3}$ or $H^{z^m}_{t_3}$ instead of $H^j_{t_3}$. Note that $R$ and $A_7^Q$ are $\calG_1$-measurable so \eqref{a24.2} implies that 
\begin{align*}
\P\left(\left\{H^{k(m)}_{t_3}(t_2) = X^{j}_{t_2}
\right\}\cap A_6^Q \cap A_7^Q \mid \calG_1\right)
\geq \frac 1 {|\calM_Q|} \exp\left(- 4\sqrt{d}  n^{ -\delta/4}\right)
\bone_{A_6^Q\cap A_7^Q}.
\end{align*}
Recall that $f:C([0,\infty) )\to [0,1]$ is a continuous function  and it only depends  on the values of the process on $[0,t_1]$. We will explain how the last formula  and the definition \eqref{m29.4} imply that
\begin{align}\label{a24.1}
 \E&\left(\cH_{t_2}^n(\bone_{G_Q}) f\left(H^{R}_{t_3}\right) \bone_{A_6^Q\cap A_7^Q}\mid \calG_1\right)\\
&= 
 \sum_{j\in \calM_Q} 
\E\left(\frac {|\calM_Q|} n f\left(H^{j}_{t_3}\right) \bone_{A_6^Q\cap A_7^Q}\mid \calG_1\right)
\P\left(\left\{H^{R}_{t_3}(t_2) = X^{j}_{t_2}\right\}\cap A_6^Q\cap A_7^Q
\mid \calG_1\right) \notag \\
&\geq \frac  {|\calM_Q|}n
\sum_{j\in \calM_Q} 
\E\left( f\left(H^{j}_{t_3}\right) \bone_{A_6^Q\cap A_7^Q}\mid \calG_1\right)
\frac 1 {|\calM_Q|}\exp\left(- 4\sqrt{d}  n^{ -\delta/4}\right)
\bone_{A_6^Q\cap A_7^Q} \notag \\
&= \frac 1n
\sum_{j\in \calM_Q} 
\E\left( f\left(H^{j}_{t_3}\right) \bone_{A_6^Q\cap A_7^Q}\mid \calG_1\right)
\exp\left(- 4\sqrt{d}  n^{ -\delta/4}\right) \notag\\
&= \frac 1n
\E\left( \sum_{j\in \calM_Q}  f\left(H^{j}_{t_3}\right) \bone_{A_6^Q\cap A_7^Q}\mid \calG_1\right)
\exp\left(- 4\sqrt{d}  n^{ -\delta/4}\right) . \notag 
\end{align}
There is no indicator $\bone_{A_6^Q\cap A_7^Q}$ at the end of the last line because it is implicitly included in the conditional expectation, since $A_6^Q\cap A_7^Q$ is $\calG_1$-measurable.
The first equality follows from the ``total probability formula,'' i.e., we sum over all events $\{H^{R}_{t_3}(t_2) = X^{j}_{t_2}\}\cap A_6^Q\cap A_7^Q$ because they form a partition of $A_6^Q\cap A_7^Q$. Since $f$ depends only on the part of the trajectory in $[0,t_1]$, the paths of $H^{R}_{t_3}$ and $H^{j}_{t_3}$ agree on $[0,t_1]$ if $\{H^{R}_{t_3}(t_2) = X^{j}_{t_2}\}\cap A_6^Q\cap A_7^Q$ holds. The quotient $|\calM_Q|/ n$ is an alternative way of writing $\cH_{t_2}^n(\bone_{G_Q}) $. The sum can be moved under the expectation in the last step because the events $\{j\in \calM_Q\}$ belong to $\calG_1$.

One of the major steps in the proof will be a derivation of a bound for 
$\E\left( f\left(H^{R}_{t_3}\right) \bone_{A_6^Q\cap A_7^Q}\right)$. Recall the definition  \eqref{m29.4}. The inequality below holds because  $A_6^Q\cap A_7^Q\subset A_2$ and we have \eqref{m31.1}.
\begin{align}\label{a21.1}
&\left(\wt\E_{t_2}(f_Q )- n^{-\alpha d +\gamma(-\alpha+3\delta/4) -\delta}
\right) 
\P(A_6^Q\cap A_7^Q) \\
&\qquad= \E\left(\left(\wt\E_{t_2}(f_Q )- n^{-\alpha d +\gamma(-\alpha+3\delta/4) -\delta}
\right) \bone_{A_6^Q\cap A_7^Q}\right)\notag\\
&\qquad\leq
\E\left(\cH_{t_2}^n(f_Q)\bone_{A_6^Q\cap A_7^Q}\right) 
= 
\frac 1n \E\left(\sum_{i\in\calM_Q} f\left(H^{i}_{t_2}\right)\bone_{A_6^Q\cap A_7^Q}\right). \notag 
\end{align}

If follows from the definition of $\calM_Q$ and the fact that $f$ depends only on the values of the process in $[0,t_1] $ that
\begin{align}\label{a21.3}
&
\sum_{i\in\calM_Q} f\left(H^{i}_{t_2}\right)\bone_{A_6^Q\cap A_7^Q}
=
\sum_{i\in\calM_Q} f\left(H^{i}_{t_3}\right)\bone_{A_6^Q\cap A_7^Q}.
\end{align}

Recall that $R$ and, therefore, $\bone_{A_7^Q}$ are $\calG_1$-measurable.
In the following calculation, we use \eqref{a21.3} in the first equality. The first inequality follows from \eqref{a24.1}. The second inequality follows from the fact that $A_6^Q\subset A_1$ and \eqref{m29.10}.
\begin{align}\notag 
&\frac 1n \E\left(
\sum_{i\in\calM_Q} f\left(H^{i}_{t_2}\right)\bone_{A_6^Q\cap A_7^Q}\right)
=\frac 1n \E\left(
\sum_{i\in\calM_Q} f\left(H^{i}_{t_3}\right)\bone_{A_6^Q\cap A_7^Q}\right)\\
&=\E\left(\frac 1n \E\left(
\sum_{i\in\calM_Q} f\left(H^{i}_{t_3}\right)\bone_{A_6^Q\cap A_7^Q}\mid \calG_1\right) \right)\notag \\
&\leq
\E\left( \E\left(
\cH_{t_2}^n(\bone_{G_Q}) f\left(H^{R}_{t_3}\right) \bone_{A_6^Q\cap A_7^Q}\mid \calG_1\right)\right)
\exp\left( 4\sqrt{d}  n^{ -\delta/4}\right) \notag \\
&\leq
\E\left( \E\left(
\left(\wt\P_{t_2}(G_Q) + n^{-\alpha d +\gamma(-\alpha+3\delta/4) -\delta}
\right)
 f\left(H^{R}_{t_3}\right) \bone_{A_6^Q\cap A_7^Q}\mid \calG_1\right)\right) 
 \exp\left( 4\sqrt{d}  n^{ -\delta/4}\right) \notag \\
&=
\left(\wt\P_{t_2}(G_Q) + n^{-\alpha d +\gamma(-\alpha+3\delta/4) -\delta}
\right)
 \E\left(
 f\left(H^{R}_{t_3}\right) \bone_{A_6^Q\cap A_7^Q}\right) 
 \exp\left( 4\sqrt{d}  n^{ -\delta/4}\right). \notag 
\end{align}
We combine this with \eqref{a21.1} to obtain
\begin{align*}
&\left(\wt\E_{t_2}(f_Q )- n^{-\alpha d +\gamma(-\alpha+3\delta/4) -\delta}
\right) 
\P(A_6^Q\cap A_7^Q)\\
&\qquad\leq
\left(\wt\P_{t_2}(G_Q) + n^{-\alpha d +\gamma(-\alpha+3\delta/4) -\delta}
\right)
\E\left(
 f\left(H^{R}_{t_3}\right) \bone_{A_6^Q\cap A_7^Q}\right) \exp\left( 4\sqrt{d}  n^{ -\delta/4}\right),
\end{align*}
and, therefore,
\begin{align}\label{a21.5}
\E\left(
 f\left(H^{R}_{t_3}\right) \bone_{A_6^Q\cap A_7^Q}\right)
&\geq
\frac{\left(\wt\E_{t_2}(f_Q )- n^{-\alpha d +\gamma(-\alpha+3\delta/4) -\delta}
\right) 
\P(A_6^Q\cap A_7^Q)}
{\left(\wt\P_{t_2}(G_Q) + n^{-\alpha d +\gamma(-\alpha+3\delta/4) -\delta}
\right)
\exp\left( 4\sqrt{d}  n^{ -\delta/4}\right)}.
\end{align}
By Lemma \ref{a11.6}, 
\begin{align*}
\wt\P_{t_2}\left(  G_Q \right)
\geq c_1 n^{-\alpha d +\gamma(-\alpha+3\delta/4) } ,
\qquad \wt\E_{t_2}\left(  f_Q \right)
\geq c_2 n^{-\alpha d +\gamma(-\alpha+3\delta/4) } ,
\end{align*}
where $c_2 = c_3 \wt\E_{t_2}\left(  f \right)$.
Thus  \eqref{a21.5} yields for large $n$,
\begin{align}\label{a24.3}
\E\left(
 f\left(H^{R}_{t_3}\right) \bone_{A_6^Q \cap A_7^Q}\right)
&\geq
\frac{\wt\E_{t_2}\left( f_Q\right)(1-n^{ -\delta/4} )\P(A_6^Q \cap A_7^Q)}
{\wt\P_{t_2}\left(  G_Q \right)(1+n^{ -\delta/4})
\exp\left( 4\sqrt{d}  n^{ -\delta/4}\right)}\\
&\geq 
\frac{\wt\E_{t_2}\left( f_Q\right)}
{\wt\P_{t_2}\left(  G_Q \right)} (1- n^{-\delta/8} )\P(A_6^Q \cap A_7^Q)
\notag\\
&= \wt\E_{t_2}\left( f_Q \mid B_{t_2}\in Q\right) (1- n^{-\delta/8} )\P(A_6^Q \cap A_7^Q)\notag \\
&=\wt\E_{t_2}\left( f \mid B_{t_2}\in Q\right) (1- n^{-\delta/8} )\P(A_6^Q \cap A_7^Q).
\notag
\end{align}
One can show in a similar manner that 
\begin{align}\label{rev29.2}
\E\left(
 f\left(H^{R}_{t_3}\right) \bone_{A_6^Q \cap A_7^Q}\right)
&\leq\wt\E_{t_2}\left( f \mid B_{t_2}\in Q\right) (1+ n^{-\delta/8} )\P(A_6^Q \cap A_7^Q).
\end{align}

By Lemma \ref{a11.1} and the assumption that $\|f\|_\infty \leq 1$, for every $Q\in \bQ$,
\begin{align*}\notag
&1-\eps\leq
\frac{
\wt\E_{t_2}\left( f \mid B_{t_2}\in Q\right)}
{\wt\E_{t_2}( f) } \leq 1 +\eps,\\
&\wt\E_{t_2}( f ) - \eps \leq \wt\E_{t_2}\left( f \mid B_{t_2}\in Q\right)
\leq \wt\E_{t_2}( f )  +\eps.
\end{align*}
We use this estimate, \eqref{a24.3} and \eqref{rev29.2} to see that
\begin{align}\label{rev2.1}
\E\left(
 f\left(H^{R}_{t_3}\right) \bone_{A_6^Q \cap A_7^Q}\right)
&\geq\left(\wt\E_{t_2}\left( f \right)-\eps\right) (1- n^{-\delta/8} )\P(A_6^Q \cap A_7^Q),\\
\E\left(
 f\left(H^{R}_{t_3}\right) \bone_{A_6^Q \cap A_7^Q}\right)
&\leq\left(\wt\E_{t_2}\left( f \right)+\eps\right) (1+ n^{-\delta/8} )\P(A_6^Q \cap A_7^Q).\label{rev2.2}
\end{align}

Recall that $R$ is an $ \calF_{t_3}^+$-measurable random variable  
taking values in  $\{X^1_{t_3}, \dots, X^n_{t_3}\}$ 
and $A_7^Q=\bigcup_{j\in \calM_Q}\{R=X^j_{t_3}\}$. 
Let $A_8=\bigcup_{Q\in\bQ} (A_6^Q\cap A_7^Q)$.
It follows from \eqref{rev2.1} that
\begin{align*}
\E\left(
 f\left(H^{R}_{t_3}\right) \bone_{A_8}\right)
&\geq \left(\wt\E_{t_2}\left( f \right)-\eps\right) (1- n^{-\delta/8} )\P( A_8) .
\end{align*}
This and   $\|f\|_\infty \leq 1$ imply that,
\begin{align*}
\E\left(
 f\left(H^{R}_{t_3}\right) \right) 
&\geq
\E\left(
 f\left(H^{R}_{t_3}\right) \bone_{A_8}\right)
\geq\wt\E_{t_2}\left( f \right) (1- n^{-\delta/8} )
\P(A_8)-\eps\\
&=\wt\E_{t_2}\left( f \right)\P(A_8)
-\wt\E_{t_2}\left( f \right) n^{-\delta/8} 
\P(A_8)-\eps\\
&\geq\wt\E_{t_2}\left( f \right)-
\wt\E_{t_2}\left( f \right)(1-\P(A_8))
- n^{-\delta/8} 
-\eps\\
&\geq\wt\E_{t_2}\left( f \right)
-\P(A_8^c)- n^{-\delta/8}-\eps.
\end{align*}

For the following upper bound, we use \eqref{rev2.2} and the fact that $\|f\|_\infty \leq 1$,
\begin{align*}
\E\left(
 f\left(H^{R}_{t_3}\right) \bone_{A_8}\right)
&\leq\left(\wt\E_{t_2}\left( f \right)+\eps\right) (1+ n^{-\delta/8} )\P( A_8). 
\end{align*}
This and $\|f\|_\infty \leq 1$ imply that
\begin{align*}
\E\left(
 f\left(H^{R}_{t_3}\right) \right) 
&\leq
\E\left(
 f\left(H^{R}_{t_3}\right) \bone_{A_8}\right)
+ \P(A_8^c)\\
&\leq\wt\E_{t_2}\left( f \right) (1+ n^{-\delta/8} )
\P(A_8)+ \P(A_8^c) +2\eps\\
&=\wt\E_{t_2}\left( f \right)\P(A_8)
+\wt\E_{t_2}\left( f \right) n^{-\delta/8} 
\P(A_8)+ \P(A_8^c) +2\eps\\
&\leq\wt\E_{t_2}\left( f \right)
+\P(A_8^c)+ n^{-\delta/8}+2\eps.
\end{align*}

The last two estimates show that
\begin{align}\label{a21.7}
\left|\E\left(
 f\left(H^{R}_{t_3}\right)\right)
-\wt\E_{t_2}\left( f \right)
\right| \leq \P(A_8^c)+ n^{-\delta/8}+2\eps .
\end{align}

Recall the definition of $\chi$ following the statement of Theorem \ref{thm:uniquespine} and note that $H^{\chi(t_3)}_{t_3}(t) = J^n(t)$ for $t\in[0,t_3]$. Hence we can apply \eqref{a21.7} to $R=\chi(t_3)$ to obtain,
\begin{align}\label{a1.1}
\left|\E\left(
 f\left(J^n\right)\right)
-\wt\E_{t_2}\left( f \right)
\right| \leq \P(A_8^c)+ n^{-\delta/8}+2\eps .
\end{align}

Recall definitions \eqref{sd.1} of $A_4$, \eqref{rev30.8} of $A^Q_6$
and \eqref{rev.m6.2} of $A^Q_7$. 
We have
\begin{align*}
A_8
=\bigcup_{Q\in\bQ} (A_6^Q\cap A_7^Q)
=\bigcup_{Q\in\bQ} (A_4\cap A_5^Q\cap A_7^Q)
= A_1\cap A_2\cap A_3 \cap \bigcup_{Q\in\bQ} ( A_5^Q
\cap   A_7^Q),
\end{align*}
so
\begin{align}\label{rev.m6.3}
A_8^c \subset A_1^c\cup A_2^c\cup A_3^c \cup \left(\bigcap_{Q\in\bQ}  A_5^Q
\right)^c
\cup
\left(
A_1\cap A_2\cap A_3 \cap \bigcap_{Q\in\bQ}  A_5^Q
\cap
\left(\bigcup_{Q\in\bQ}  A_7^Q
\right)^c \right).
\end{align}
In words, for the event $A_8$ to fail, one of the following must be true:

(i)
at least one of the following events fails: $A_1$, $A_2$, $A_3$,  $\bigcap_{Q\in\bQ}A_5^Q$, or 

(ii) the event $A_1\cap A_2\cap A_3 \cap \bigcap_{Q\in\bQ} A_5^Q$ holds and $\bigcup_{Q\in\bQ}A_7^Q$ fails.

By Lemmas \ref{a2.6} and \ref{a3.1}, 
\begin{align}\label{rev30.3}
\lim_{n\to\infty} \P\left(A_1\cap A_2\cap A_3 \cap \bigcap_{Q\in\bQ} A_5^Q\right)=1.
\end{align}

Suppose the event in \eqref{rev30.3} holds but $\bigcup_{Q\in\bQ}A_7^Q$ fails. Then  
$J^n_{t_3} = X^j_{t_3}$ for some $j \notin \bigcup_{Q\in\bQ} \calM_Q$. This implies, in view of \eqref{rev31.1} and \eqref{m29.5}, that
$\dist (X^j_{t_2} ,\Lambda^c) \leq 4n^{-\alpha +3\delta /4}$ for large $n$.
If $X^j$ did not jump in the interval $[t_2,t_3]$ then $J^n_{t_2} = X^j_{t_2}$ and $\dist (J^n_{t_2} ,\Lambda^c) \leq 4n^{-\alpha +3\delta /4}$. The last event is called $C^1_j$ in \eqref{rev30.4}, with $s_j$ playing the role of $t_2$. 
If $X^j$  jumps in the interval $[t_2,t_3]$ then the spine $J^n$ passes through a branch point in $[t_2,t_3]$. 
This event is called $C^2_j$ in \eqref{rev30.5}, with $[s_j,s_{j+1}]$ playing the role of $[t_2,t_3]$. 
It follows from \eqref{rev30.4}-\eqref{rev30.5} and Lemma \ref{a3.6} that
\begin{align*}
&\limsup_{n\to\infty} \P\left(A_1\cap A_2\cap A_3 \cap \bigcap_{Q\in\bQ} A_5^Q \cap\left( \bigcup_{Q\in\bQ} A_7^Q\right)^c\right)
\leq\eps.
\end{align*}
This,  \eqref{a1.1}, \eqref{rev.m6.3} and \eqref{rev30.3} imply that
\begin{align*}
\limsup_{n\to\infty} \left|\E\left(f\left(J^n\right) \right)
- \wt\E_{t_2}\left( f \right)\right|
&\leq 3\eps.
\end{align*}
By Lemma \ref{a14.3} and $\|f\|_\infty \leq 1$, 
\begin{align*}
\limsup_{n\to\infty} \left|\E\left(f\left(J^n\right) \right)
- \wh\E^\mu\left( f \right) \right|
&\leq 4\eps.
\end{align*}
Since $\eps>0$ is arbitrarily small, 
\begin{align*}
\lim_{n\to\infty} \E\left(f(J^n)  \right)
=  \wh\E^\mu\left( f \right).
\end{align*}
The function $f$ is an arbitrary  continuous non-negative function $f:C[0,\infty) \to \R$ with $\|f\|_\infty \leq 1$ and $\wt \E_{t_1} (f) >0$, which depends only on the values of the process on $[0,t_1]$, and $t_1>0$ is also arbitrary, so the theorem follows.
\end{proof}

\section{Estimates}\label{sec:est}

We will use notation presented in Section \ref{review} and in the proof of Theorem \ref{a1.7}.

\subsection{Large deviations and likelihood ratio}

\begin{lemma}\label{a2.5}

Suppose that $\alpha, \delta, t_2$ and $t_3$ are as in \eqref{a3.3}-\eqref{a7.8} and the paragraphs following these conditions.
For a process $B$ with values in $\R^d$, let
\begin{align*}
F(B)&=\left\{\sup_{s,t\in[t_2,t_3]} \left|B_{t} - B_{s}\right| < 
2 n^{-\alpha +3\delta /4}\right\}.
\end{align*}
Suppose that $B'$ and $B''$ are independent Brownian motions in $\R^d$.
If  $x^1,x^2 \in Q\in \bQ$ and $|x^j-z^k|  \leq 
n^{-\alpha +3\delta /4} $ for $j,k=1,2$ then
\begin{align*} 
&\frac{\P\left(\{B'_{t_3} \in dz^2,B''_{t_3} \in dz^1 ,  B'_{t_2} \in d x^1, B''_{t_2} \in d x^2\}\cap F(B')\cap F(B'')\right) }
{\P\left(\{B'_{t_3} \in dz^1, B''_{t_3} \in dz^2  ,  B'_{t_2} \in d x^1, B''_{t_2} \in d x^2\}\cap F(B')\cap F(B'')\right) } \leq
 \exp\left( 4\sqrt{d}  n^{ -\delta/4}\right).
\end{align*}
\end{lemma}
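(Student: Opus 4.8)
The plan is to reduce the ratio of conditioned Gaussian probabilities to a ratio of Gaussian transition densities times a ratio of conditional probabilities of the two ``oscillation'' events $F(\cdot)$. Since $B'$ and $B''$ are independent Brownian motions, the numerator factors (informally, in the Radon--Nikodym sense of Remark (vi)) as
\begin{align*}
&\P\left(B'_{t_2}\in dx^1, B'_{t_3}\in dz^2, F(B') \right)\,\P\left(B''_{t_2}\in dx^2, B''_{t_3}\in dz^1, F(B'')\right)\\
&\quad = p_{t_2}(x^1)\, p_{t_3-t_2}(x^1, z^2)\, \P\left(F(B')\mid B'_{t_2}=x^1, B'_{t_3}=z^2\right)\\
&\qquad\times p_{t_2}(x^2)\, p_{t_3-t_2}(x^2, z^1)\, \P\left(F(B'')\mid B''_{t_2}=x^2, B''_{t_3}=z^1\right)\,dx^1\,dx^2\,dz^1\,dz^2,
\end{align*}
where $p_t$ is the $d$-dimensional heat kernel and $p_{t_2}(x)$ abbreviates the initial density (which cancels anyway). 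The denominator has the same form with $z^1 \leftrightarrow z^2$ swapped. So the ratio equals
\begin{align*}
\frac{p_{t_3-t_2}(x^1,z^2)\,p_{t_3-t_2}(x^2,z^1)}{p_{t_3-t_2}(x^1,z^1)\,p_{t_3-t_2}(x^2,z^2)}\cdot
\frac{\P\left(F(B')\mid x^1\to z^2\right)\,\P\left(F(B'')\mid x^2\to z^1\right)}{\P\left(F(B')\mid x^1\to z^1\right)\,\P\left(F(B'')\mid x^2\to z^2\right)}.
\end{align*}

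First I would bound the Gaussian density ratio. With $\Delta t = t_3 - t_2 = n^{-2\alpha+\delta}$, the exponent of the density ratio is
\[
\frac{1}{2\Delta t}\Bigl(|x^1-z^1|^2 + |x^2-z^2|^2 - |x^1-z^2|^2 - |x^2-z^1|^2\Bigr)
= \frac{1}{\Delta t}\,\langle x^1 - x^2,\ z^1 - z^2\rangle.
\]
Since $x^1, x^2$ both lie in the cube $Q$ of side $n^{-\alpha}$ we have $|x^1-x^2|\le \sqrt d\, n^{-\alpha}$, and by the hypothesis $|x^j - z^k|\le n^{-\alpha+3\delta/4}$ together with $x^1,x^2\in Q$ we get $|z^1-z^2|\le |z^1 - x^1| + |x^1 - x^2| + |x^2 - z^2| \le 3 n^{-\alpha+3\delta/4}$ (say), so by Cauchy--Schwarz the exponent is at most $\frac{1}{\Delta t}\cdot \sqrt d\, n^{-\alpha}\cdot 3 n^{-\alpha+3\delta/4} = 3\sqrt d\, n^{-2\alpha+3\delta/4+2\alpha-\delta} = 3\sqrt d\, n^{-\delta/4}$. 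Hence the density ratio is at most $\exp\bigl(3\sqrt d\, n^{-\delta/4}\bigr)$.

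For the second factor, the point is that each conditional probability $\P(F(B)\mid B_{t_2}=x, B_{t_3}=z)$ is the probability that a Brownian bridge of duration $\Delta t$ from $x$ to $z$ has oscillation less than $2n^{-\alpha+3\delta/4}$; all four bridges have endpoints in or very near $Q$, within mutual distance $O(n^{-\alpha+3\delta/4})$, and duration $\Delta t$ with $\sqrt{\Delta t} = n^{-\alpha+\delta/2} \ll n^{-\alpha+3\delta/4}$ (since $\delta/2 > 3\delta/4$ is false — careful: $\delta/2 < 3\delta/4$, so indeed $\sqrt{\Delta t} \ll n^{-\alpha+3\delta/4}$). By Brownian scaling each such probability is $1 - o(1)$ and, more precisely, one can bound the ratio of any two of them: writing $\P(F\mid x\to z) = 1 - \P(\text{osc}\ge 2n^{-\alpha+3\delta/4})$ and using a reflection/maximal-inequality estimate for the bridge oscillation (which decays like $\exp(-c\, n^{\delta/2})$ after scaling), the ratio of the two conditional-probability products is $1 + o(n^{-\delta/4})$, hence bounded by $\exp(\sqrt d\, n^{-\delta/4})$ for large $n$. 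Multiplying the two bounds gives $\exp(4\sqrt d\, n^{-\delta/4})$, as claimed. The main obstacle I anticipate is making the bridge-oscillation comparison clean and uniform: one needs that shifting the endpoints of a short Brownian bridge by a distance that is small relative to the oscillation threshold changes $\P(F)$ only by a negligible multiplicative factor. I would handle this by coupling the bridges (translating one bridge to match endpoints and absorbing the discrepancy into an error that is itself $O(n^{-\alpha+3\delta/4})$ in sup-norm, far below the threshold $2n^{-\alpha+3\delta/4}$ up to constants), or alternatively by a direct Gaussian computation of $\P(F)$ via the known distribution of the maximum of a Brownian bridge; either way the slack between $3\delta/4$ (threshold exponent) and $\delta/2$ (scale exponent) and the choice of constants in $\bQ$ is exactly what provides the room.
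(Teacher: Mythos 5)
Your proof follows essentially the same route as the paper's: factor by independence, bound the ratio of Gaussian transition densities over the time increment $\Delta t = t_3 - t_2$, and show the conditional probabilities of the oscillation events $F(\cdot)$ given the endpoints are each $1 - O(\exp(-c\,n^{\delta/2}))$, hence their ratio is a negligible factor. Two small slips, neither of which affects the conclusion: (a) the exponent of the density ratio is $-\frac{1}{\Delta t}\langle x^1-x^2, z^1-z^2\rangle$, not $+\frac{1}{\Delta t}\langle x^1-x^2, z^1-z^2\rangle$, but you take absolute values anyway; (b) the translation-coupling alternative you mention would shift the bridge by $O(n^{-\alpha+3\delta/4})$, which is the \emph{same} order as the threshold $2n^{-\alpha+3\delta/4}$, not ``far below'' it, so that heuristic alone would not close the gap --- but your primary argument (scaling by $\sqrt{\Delta t}=n^{-\alpha+\delta/2}$ and noting the rescaled threshold grows like $n^{\delta/4}$ while the rescaled endpoint offset is of the same order, giving a Gaussian tail $\exp(-c\,n^{\delta/2})$) is the correct one and matches what the paper does. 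A cosmetic difference: the paper bounds the density ratio via the triangle inequality $(|x^2-x^1|-|x^1-z^1|)^2$ term by term, while your polarization identity $\langle x^1-x^2, z^1-z^2\rangle$ plus Cauchy--Schwarz is a cleaner way to get the same $O(\sqrt{d}\,n^{-\delta/4})$ exponent. The paper's Step~1 phrases the oscillation-conditional bound as a Radon--Nikodym derivative estimate $\P(B_{t_3}\in dz\mid F,B_{t_2}=x)/\P(B_{t_3}\in dz\mid B_{t_2}=x)\approx 1$ proved via the strong Markov property at the first exit from a small sphere, which is equivalent to your ``bridge oscillation probability close to $1$'' framing.
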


\begin{proof} 
\emph{Step 1}.
We will prove that if $B$ is Brownian motion in $\R^d$ then
there exists $c_1$ such that for any $x$ and $z$ such that $|x-z| \leq 
n^{-\alpha +3\delta /4}$ and sufficiently large $n$, the following bounds hold for the Radon-Nikodym derivative,
\begin{align}\label{rev22.1}
 1- \exp\left( - c_1 n^{\delta/2}\right)\leq
\frac {\P(B_{t_3} \in dz \mid F(B), B_{t_2} = x)}
{\P(B_{t_3} \in dz \mid  B_{t_2} = x) }
\leq 1+ \exp\left( - c_1 n^{\delta/2}\right).
\end{align}

In the first step, we will write $F$ instead of $F(B)$.
Standard estimates show that there exists $c_2$ such that for all $n\geq 2$,
\begin{align}\notag
\P(F^c)
&\leq\P\left(\sup_{s,t\in[t_2,t_3]} \left|B_{t} - B_{s}\right| \geq 
2 n^{-\alpha +3\delta /4}\right)
\leq \exp\left( - c_2
n^{2(-\alpha +3\delta /4)}/ (t_3-t_2)\right)\\
&\qquad= \exp\left( - c_2
n^{2(-\alpha +3\delta /4)}/ n^{-2\alpha + \delta}\right)
= \exp\left( - c_2 n^{\delta/2}\right).\label{a2.1}
\end{align}
Recall that $|x-z|  \leq  n^{-\alpha +3\delta /4} $.
Let $\calB(v,r)$ denote a ball with center $v$ and radius $r$.
 If $B_{t_2}=x$ then $\dist\left(z, \prt\calB\left(B_{t_2}, 2 n^{-\alpha +3\delta /4}\right)\right) \geq n^{-\alpha +3\delta /4} $.
Let $\tau$ be the hitting time of 
the sphere $\calS:=\prt\calB\left(B_{t_2}, 2 n^{-\alpha +3\delta /4}\right)$.
The last estimate, \eqref{a2.1} and an application of the strong Markov property at time $\tau$ imply that 
\begin{align}\label{a2.2}
&\P(B_{t_3} \in dz, F^c \mid B_{t_2} = x)
\leq \P(\tau <t_3  \mid B_{t_2} = x)
\sup_{y\in\calS, s\in[0,t_3-t_2]} \P(B_{s} \in dz \mid  B_{0} = y) \\
&\ \leq \P(F^c)  
\sup_{y\in\calS, s\in[0,t_3-t_2]} \P(B_{s} \in dz \mid  B_{0} = y)
\leq \exp\left( - c_2 n^{\delta/2}\right) 
\P(B_{t_3} \in dz \mid  B_{t_2} = x).\notag
\end{align}
We have
\begin{align}\label{oc6.1}
&\frac
{\P(B_{t_3} \in dz \mid F, B_{t_2} = x)}
{\P(B_{t_3} \in dz \mid  B_{t_2} = x)}
= \frac{\P(B_{t_3} \in dz , F\mid B_{t_2} = x)}
{\P(B_{t_3} \in dz \mid  B_{t_2} = x)\P( F\mid B_{t_2} = x)}\\
&\qquad= \frac{\P(B_{t_3} \in dz \mid B_{t_2} = x)
-\P(B_{t_3} \in dz , F^c\mid B_{t_2} = x)}
{\P(B_{t_3} \in dz \mid  B_{t_2} = x)\P( F\mid B_{t_2} = x)}\notag\\
&\qquad= \frac{1}
{\P( F\mid B_{t_2} = x)}\left(1
-\frac{\P(B_{t_3} \in dz , F^c\mid B_{t_2} = x)}
{\P(B_{t_3} \in dz \mid  B_{t_2} = x)}\right).\notag
\end{align}
This and \eqref{a2.2} yield
\begin{align}\label{oc6.2}
\frac {\P(B_{t_3} \in dz \mid F, B_{t_2} = x)}
{\P(B_{t_3} \in dz \mid  B_{t_2} = x) }
\geq
1
-\frac{\P(B_{t_3} \in dz , F^c\mid B_{t_2} = x)}
{\P(B_{t_3} \in dz \mid  B_{t_2} = x)}
\geq 1- \exp\left( - c_2 n^{\delta/2}\right).
\end{align}
It follows from
 \eqref{oc6.1} and \eqref{a2.1},  along with translation invariance, that for some $c_3$ and sufficiently large $n$,
\begin{align*}
\frac {\P(B_{t_3} \in dz \mid F, B_{t_2} = x)}
{\P(B_{t_3} \in dz \mid  B_{t_2} = x) }
&\leq \frac{1}
{\P( F\mid B_{t_2} = x)} =\frac{1}
{\P( F)} = \frac{1}
{1-\P( F^c)}\\
& \leq \frac{1}
{1-\exp\left( - c_2 n^{\delta/2}\right)}
\leq 1+ \exp\left( - c_3 n^{\delta/2}\right).
\end{align*}
This estimate and \eqref{oc6.2} imply \eqref{rev22.1}.

\emph{Step 2}.
We will prove that
f $B'$ and $B''$ are independent Brownian motions in $\R^d$,  $x^1,x^2 \in Q\in \bQ$ and $|x^j-z^k|  \leq 
n^{-\alpha +3\delta /4} $ for $j,k=1,2$ then
\begin{align} 
&\frac{\P\left(B'_{t_3} \in dz^2 , B'_{t_2} \in d x^1\right) }
{\P\left(B'_{t_3} \in dz^1,  B'_{t_2} \in d x^1\right) } 
\cdot
\frac{\P\left(B''_{t_3} \in dz^1 , B''_{t_2} \in d x^2\right) }
{\P\left( B''_{t_3} \in dz^2 , B''_{t_2} \in d x^2\right) } \leq
 \exp\left( 2\sqrt{d}  n^{ -\delta/4}\right).\label{rev22.2}
\end{align}

Suppose that $x^1,x^2 \in Q\in \bQ$. Then $|x^1-x^2| \leq \sqrt{d} n^{-\alpha}$. 
If in addition $|x^j-z^k|  \leq 
n^{-\alpha +3\delta /4} $ for $j,k=1,2$ then
\begin{align*} \notag 
&\frac{\P\left(B'_{t_3} \in dz^2 , B'_{t_2} \in d x^1\right) }
{\P\left(B'_{t_3} \in dz^1,  B'_{t_2} \in d x^1\right) } 
\cdot
\frac{\P\left(B''_{t_3} \in dz^1 , B''_{t_2} \in d x^2\right) }
{\P\left( B''_{t_3} \in dz^2 , B''_{t_2} \in d x^2\right) }\\
&\qquad=\frac{\P(B'_{t_3} \in dz^2,B''_{t_3} \in dz^1 \mid  B'_{t_2} = x^1, B''_{t_2} = x^2) }
{\P(B'_{t_3} \in dz^1, B''_{t_3} \in dz^2 \mid  B'_{t_2} = x^1, B''_{t_2} = x^2) }(z^1,z^2) \notag \\
&\qquad=\frac
{\exp\left( -|x^2-z^1|^2/(2(t_3-t_2)) \right)}
{\exp\left( -|x^1-z^1|^2/(2(t_3-t_2)) \right)}
\cdot
\frac
{\exp\left( -|x^1-z^2|^2/(2(t_3-t_2)) \right)}
{\exp\left( -|x^2-z^2|^2/(2(t_3-t_2)) \right)} \notag \\
&\qquad\leq \frac
{\exp\left( -(|x^2-x^1|-|x^1-z^1|)^2/(2(t_3-t_2)) \right)}
{\exp\left( -|x^1-z^1|^2/(2(t_3-t_2)) \right)} \notag \\
&\qquad\qquad \times
\frac
{\exp\left( -(|x^1-x^2|-|x^2-z^2|)^2/(2(t_3-t_2)) \right)}
{\exp\left( -|x^2-z^2|^2/(2(t_3-t_2)) \right)}
 \notag \\
&\qquad= \exp\left( -(|x^2-x^1|^2-2|x^2-x^1|\cdot|x^1-z^1|)/(2(t_3-t_2)) \right) \notag \\
&\qquad\qquad \times
\exp\left( -(|x^1-x^2|^2-2|x^2-x^1|\cdot|x^2-z^2|)/(2(t_3-t_2)) \right)
 \notag \\
&\qquad\leq \exp\left( |x^2-x^1|\cdot|x^1-z^1|)/(t_3-t_2) \right)
\exp\left( |x^2-x^1|\cdot|x^2-z^2|)/(t_3-t_2) \right) \notag \\
&\qquad \leq  \exp\left(2\sqrt{d} n^{-\alpha} n^{-\alpha+3\delta/4} /  n^{-2\alpha +\delta}\right)
= \exp\left( 2\sqrt{d}  n^{ -\delta/4}\right).
\end{align*}

\emph{Step 3}.
We use independence of $B'$ and $B''$, \eqref{rev22.1} and \eqref{rev22.2} to obtain for large $n$,
\begin{align*} 
&\frac{\P\left(\{B'_{t_3} \in dz^2,B''_{t_3} \in dz^1 ,  B'_{t_2} \in d x^1, B''_{t_2} \in d x^2\}\cap F(B')\cap F(B'')\right) }
{\P\left(\{B'_{t_3} \in dz^1, B''_{t_3} \in dz^2  ,  B'_{t_2} \in d x^1, B''_{t_2} \in d x^2\}\cap F(B')\cap F(B'')\right) } \\
&=\frac{\P\left(\{B'_{t_3} \in dz^2 , B'_{t_2} \in d x^1\}\cap F(B')\right) }
{\P\left(\{B'_{t_3} \in dz^1,  B'_{t_2} \in d x^1\}\cap F(B')\right) } 
\cdot
\frac{\P\left(\{B''_{t_3} \in dz^1 , B''_{t_2} \in d x^2\}\cap F(B'')\right) }
{\P\left(\{ B''_{t_3} \in dz^2 , B''_{t_2} \in d x^2\}\cap F(B'')\right) }\\
&\leq
\frac{\P\left(B'_{t_3} \in dz^2 , B'_{t_2} \in d x^1\right) }
{\P\left(B'_{t_3} \in dz^1,  B'_{t_2} \in d x^1\right) } 
\cdot
\frac{\P\left(B''_{t_3} \in dz^1 , B''_{t_2} \in d x^2\right) }
{\P\left( B''_{t_3} \in dz^2 , B''_{t_2} \in d x^2\right) }
\cdot \frac{\left(1+ \exp\left( - c_3 n^{\delta/2}\right)\right)^2}
{\left(1- \exp\left( - c_3 n^{\delta/2}\right)\right)^2}
\\
&\leq
\exp\left( 2\sqrt{d}  n^{ -\delta/4}\right)
 \frac{\left(1+ \exp\left( - c_3 n^{\delta/2}\right)\right)^2}
{\left(1- \exp\left( - c_3 n^{\delta/2}\right)\right)^2}
\leq
 \exp\left( 4\sqrt{d}  n^{ -\delta/4}\right).
\end{align*}
\end{proof}

Recall  definitions of $A_3, \Lambda''(Q)$, $A_5^Q(z^1, \dots,z^N)$ and $A_5^Q$  stated in \eqref{a2.3}, \eqref{rev30.2}, \eqref{rev.m6.1} and \eqref{rev30.7}. 

\begin{lemma}\label{a2.6}
(i)
$\lim_{n\to\infty} \P(A_3) = 1$.

(ii) 
\begin{align*}
\lim_{n\to\infty} \P\left(\bigcap_{Q\in\bQ}  A_5^Q\right)=1.
\end{align*}
\end{lemma}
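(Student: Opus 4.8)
For part (i) the plan is a union bound over the $n$ driving Brownian motions. Each $B^i$ is a standard Brownian motion in $\R^d$ and $t_3-t_2=n^{-2\alpha+\delta}$, so the same oscillation estimate used for \eqref{a2.1} gives
\[
\P\!\left(\sup_{s,t\in[t_2,t_3]}\bigl|B^i_s-B^i_t\bigr|\ge 2n^{-\alpha+3\delta/4}\right)\le C\exp\!\left(-c\,\frac{n^{2(-\alpha+3\delta/4)}}{n^{-2\alpha+\delta}}\right)=C\exp\bigl(-c\,n^{\delta/2}\bigr),
\]
using $2(-\alpha+3\delta/4)-(-2\alpha+\delta)=\delta/2$. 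Summing over $1\le i\le n$ gives $\P(A_3^c)\le nC\exp(-c\,n^{\delta/2})\to0$ since $\delta>0$, which is (i).

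For part (ii) the plan is to reduce $\bigcup_{Q\in\bQ}(A_5^Q)^c$ to events that do not depend on $Q$ and are already controlled by the tail bounds behind part (i). I would first establish the inclusion
\[
(A_5^Q)^c\ \subset\ A_3^c\ \cup\ \bigcup_{j=1}^n\Bigl\{\bigl|B^j_{t_3}-B^j_{t_2}\bigr|\ge n^{-\alpha+3\delta/4}-\sqrt d\,n^{-\alpha}\Bigr\},
\]
whose right-hand side is independent of $Q$. To see this, suppose $A_3$ holds but $(A_5^Q)^c$ does: by \eqref{rev30.2}--\eqref{rev30.7} there are $j\in\calM_Q$ and $x\in Q$ with $|x-X^j_{t_3}|>n^{-\alpha+3\delta/4}$. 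On $A_3$ no particle with index in $\calM_Q$ jumps on $[t_2,t_3]$ --- this is the observation already used in the proof of Theorem~\ref{a1.7}, which follows from the separation \eqref{m29.5} --- so \eqref{a22.1} gives $X^j_{t_3}-X^j_{t_2}=B^j_{t_3}-B^j_{t_2}$. Since $x$ and $X^j_{t_2}$ both lie in the cube $Q$ of side $n^{-\alpha}$ we have $|x-X^j_{t_2}|\le\sqrt d\,n^{-\alpha}$, and the triangle inequality then forces $|B^j_{t_3}-B^j_{t_2}|>n^{-\alpha+3\delta/4}-\sqrt d\,n^{-\alpha}$, proving the inclusion.

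To finish, I would take the union over $Q\in\bQ$, note that for large $n$ the threshold satisfies $n^{-\alpha+3\delta/4}-\sqrt d\,n^{-\alpha}\ge\tfrac12 n^{-\alpha+3\delta/4}$ (here $\delta>0$ is used again), and bound each remaining term by the Gaussian tail of the increment $B^j_{t_3}-B^j_{t_2}\sim N(0,(t_3-t_2)I_d)$:
\[
\P\!\left(\bigl|B^j_{t_3}-B^j_{t_2}\bigr|\ge\tfrac12 n^{-\alpha+3\delta/4}\right)\le C'\exp\!\left(-c'\,\frac{n^{2(-\alpha+3\delta/4)}}{n^{-2\alpha+\delta}}\right)=C'\exp\bigl(-c'\,n^{\delta/2}\bigr),
\]
so that $\P\bigl(\bigcup_{Q\in\bQ}(A_5^Q)^c\bigr)\le\P(A_3^c)+nC'\exp(-c'\,n^{\delta/2})\to0$ by part (i). The only places where care is needed --- and which I would verify explicitly rather than treat as routine --- are the invocation of the ``no jump on $A_3$'' fact, which rests on the separation \eqref{m29.5} and is the one spot where the geometry of the cube family $\bQ$ enters, and the check that the cube-diameter correction $\sqrt d\,n^{-\alpha}$ is of strictly smaller order than $n^{-\alpha+3\delta/4}$, so that the surviving increment event is still a genuine large-deviation event decaying like $e^{-c'n^{\delta/2}}$; beyond this bookkeeping there is no real obstacle, as both statements reduce to a union bound over the $n$ particles.
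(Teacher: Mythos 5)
Your proof is correct and follows essentially the same route as the paper: a union bound over the $n$ particles using the sub-Gaussian oscillation estimate \eqref{a2.1} for part (i), and the same estimate (with a tighter threshold) together with the separation condition \eqref{m29.5} for part (ii). The one minor refinement in your part (ii) is that you reduce $\bigcup_{Q\in\bQ}(A_5^Q)^c$ to a $Q$-independent event before applying the union bound, whereas the paper's displayed estimate picks up an extra (harmless) factor $n^{\alpha d}|\Lambda|$ from summing over cubes; you also spell out explicitly the ``no jump on $[t_2,t_3]$'' observation, which the paper proves earlier (in the proof of Theorem~\ref{a1.7}) but leaves implicit in the lemma's own proof.
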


\begin{proof}
(i) We use \eqref{a2.1} to see that
\begin{align}\label{a2.4}
\P(A_3^c) &\leq \sum_{1\leq j \leq n}  \P
\left(\sup_{s,t\in[t_2,t_3]}\left |B^{j}_s-B^{j}_t\right| \geq 
2 n^{-\alpha +3\delta /4}\right)\leq n \exp\left( - c_1 n^{\delta/2}\right).
\end{align}
Since $\delta>0$, (i) follows.

(ii)
The number of 
 $Q\in\bQ$ is bounded by $ n^{\alpha d}|\Lambda|$.
Note that   \eqref{a2.1} applies (with a different constant)  when $2 n^{-\alpha +3\delta /4}$ is replaced with $(1/2) n^{-\alpha +3\delta /4}$. Thus
\begin{align*}
\P\left(\left(\bigcap_{Q\in\bQ}  A_5^Q\right)^c\right)
&= \P\left(\bigcup_{Q\in\bQ} \left( \bigcup _{(z^1, \dots,z^N) \in \Lambda''(Q)} A_5^Q(z^1, \dots,z^N)\right)^c\;\right)\\
&\leq  n^{\alpha d}|\Lambda| n \exp\left( - c_2 n^{\delta/2}\right).
\end{align*}
Part (ii) follows since $\delta>0$.
\end{proof}

\subsection{Conditioned space-time Brownian motion}

Let $\P_{x,y,s}$ denote the distribution of $\{B_t, 0 \leq t \leq s\}$ where $B$ is Brownian motion  starting from $B_0=x$, conditioned by $\{B_s=y\}$, and conditioned to stay inside $\Lambda$ on the interval $[0,s]$. The distribution  
$\P_{x,y,s}$ can be thought of as the distribution of the space component of the space-time Brownian motion $(B_t,t)$ conditioned by the parabolic function $h$ in $\Lambda\times [0,s]$, equal to 0 everywhere on the boundary  of $\Lambda\times [0,s]$ except for $(y,s)$. Such processes are known as $h$-processes.

Let $p_t(x,y)$ denote the transition density for Brownian motion killed upon exiting $\Lambda$.
Let $\lambda>0$ be the first eigenvalue for  $(-\frac 1 2) \Delta$ with Dirichlet boundary conditions in $\Lambda$ and let $\vphi>0$ be the corresponding eigenfunction.
A bounded Lipschitz domain is intrinsically ultracontractive (IU). We will need only one result on IU domains, cited below, so we will not define IU domains here; instead we ask the reader to consult, e.g., 
\cite{DS,banu}. It follows from \cite[(1.8)]{banu} that  for any $\eta>0$ there exists $u$ such that for $s\geq u$ and  $x,y\in \Lambda$,
\begin{align}\label{a14.2}
1-\eta \leq
\frac{ p_{s}(x,y)}{e^{-\lambda s} \vphi(x) \vphi(y)}\leq 1+\eta.
\end{align}

\begin{lemma}\label{a11.4}
Let $\wt p_t(x,y)$ denote the transition density for Brownian motion conditioned to stay in $\Lambda$ on $[0,t]$.
There exists $s_1$ and $c_1,c_2,\gamma\in(0,\infty)$ such that for all $t\geq s_1$ and $x,y\in \Lambda$,
\begin{align}\label{a13.7}
c_1 \dist(y, \Lambda^c)^\gamma \leq
\wt p_t(x,y) \leq c_2 .
\end{align}
\end{lemma}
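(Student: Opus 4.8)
The plan is to reduce \eqref{a13.7} to the intrinsic ultracontractivity bound \eqref{a14.2} together with a polynomial lower bound on the first Dirichlet eigenfunction $\vphi$. Write $p_t(x,y)$ for the heat kernel of Brownian motion killed on $\Lambda^c$; then, by definition of conditioning,
\[
\wt p_t(x,y) \;=\; \frac{p_t(x,y)}{\int_\Lambda p_t(x,z)\,dz}.
\]
Apply \eqref{a14.2} with, say, $\eta=1/2$ to obtain a time $u$ such that for all $t\ge u$ and all $x,y\in\Lambda$, both $p_t(x,y)$ and $\int_\Lambda p_t(x,z)\,dz$ lie within a factor $2$ of $e^{-\lambda t}\vphi(x)\vphi(y)$ and of $e^{-\lambda t}\vphi(x)\int_\Lambda\vphi$, respectively. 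The common factor $e^{-\lambda t}\vphi(x)$ cancels in the ratio, so with $s_1:=u$,
\[
\tfrac13\,\frac{\vphi(y)}{\int_\Lambda \vphi}\;\le\;\wt p_t(x,y)\;\le\;3\,\frac{\vphi(y)}{\int_\Lambda\vphi},\qquad t\ge s_1,
\]
uniformly in $x,y\in\Lambda$. Since $\vphi>0$ we have $\int_\Lambda\vphi>0$, and $\vphi$ is bounded (it equals $e^{\lambda t}P^\Lambda_t\vphi$, and $P^\Lambda_t$ maps $L^2(\Lambda)$ into $L^\infty(\Lambda)$ for $t>0$), so the upper bound in \eqref{a13.7} follows with $c_2=3\|\vphi\|_\infty/\int_\Lambda\vphi$. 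Everything then reduces to proving $\vphi(y)\ge c\,\dist(y,\Lambda^c)^\gamma$ on $\Lambda$ for some $c>0$ and $\gamma\in(0,\infty)$.

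For that lower bound I would use the probabilistic representation $\vphi(y)=e^{\lambda T}\E_y\!\left[\vphi(B_T)\bone_{\tau_\Lambda>T}\right]$, where $T$ is a constant depending only on $\Lambda$ to be fixed below. Pick $d_0>0$ small enough that $K:=\{z\in\Lambda:\dist(z,\Lambda^c)\ge d_0\}$ has positive Lebesgue measure; by the strong maximum principle $\vphi>0$ throughout $\Lambda$, hence $c_K:=\inf_K\vphi>0$ by continuity and compactness. Thus $\vphi(y)\ge e^{\lambda T}c_K\,\P_y(\tau_\Lambda>T,\ B_T\in K)$, and it suffices to show $\P_y(\tau_\Lambda>T,\ B_T\in K)\ge c'\dist(y,\Lambda^c)^\gamma$. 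For $y$ with $\dist(y,\Lambda^c)\ge r_0$ (a fixed threshold) this is immediate from positivity and continuity of $p_T$ on the compact set $\{\dist(\cdot,\Lambda^c)\ge r_0\}$, so the only real content is the case of $y$ near $\Lambda^c$.

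For $y$ near $\Lambda^c$ I would invoke the uniform interior cone condition, which is precisely where the Lipschitz structure of $\Lambda$ enters. By compactness of $\partial\Lambda$ and the finitely many Lipschitz graph charts, there are constants $\theta_0\in(0,\pi/2)$, $h_0>0$, $\rho_0>0$ such that every $\xi\in\partial\Lambda$ is the vertex of a truncated cone of half-angle $\theta_0$ and height $h_0$ contained in $\Lambda$, with ``top'' region inside $\{\dist(\cdot,\Lambda^c)\ge\rho_0\}$, and such that the nearest boundary point $\xi$ to a near-boundary $y$ may be chosen so that $y$ lies in this cone at distance $\gtrsim r:=\dist(y,\Lambda^c)$ from the lateral boundary. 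A Brownian scaling/chaining argument inside the cone—passing from radius $r$ to $2r$ to $4r$ to $\dots$ to $h_0$, each step costing a fixed probability factor $q\in(0,1)$ and a time $\le C(2^k r)^2$, all by scale invariance—shows that $B$ started at $y$ reaches the top of the cone, hence a point of $K$ once $d_0\le\rho_0$, without leaving the cone (a fortiori without leaving $\Lambda$), within a total time bounded by a domain constant $\le C h_0^2$, with probability $\ge c\,(r/h_0)^\gamma$, where $\gamma:=\log_2(1/q)\in(0,\infty)$. Choosing $T$ larger than this domain constant and applying the Markov property at the first time the top is reached (from $K$, Brownian motion stays in $\Lambda$ up to time $T$ and returns to $K$ at time $T$ with probability bounded below, again by heat-kernel positivity on compacts), one gets $\P_y(\tau_\Lambda>T,\ B_T\in K)\ge c'\,r^\gamma$, which completes the proof.

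The main obstacle is the cone estimate: extracting the uniform constants $\theta_0,h_0,\rho_0$ over all of $\partial\Lambda$, and verifying that a point $y$ near $\Lambda^c$ genuinely sits inside a fixed-aperture interior cone at its nearest boundary point with room to spare from the lateral sides (so that the scaling/chaining iteration can be run). This is exactly the step that fails for domains with inward cusps, where $\vphi$ can decay faster than any power of $\dist(\cdot,\Lambda^c)$, and it is why the Lipschitz hypothesis cannot be dropped here. I note that, alternatively, the bound $\vphi(y)\gtrsim\dist(y,\Lambda^c)^\gamma$ for bounded Lipschitz domains can be quoted from the literature on Green-function and harmonic-measure estimates in Lipschitz (NTA) domains, in which case the argument collapses to the short heat-kernel computation in the first paragraph.
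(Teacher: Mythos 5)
Your proposal is correct and takes a genuinely different route to the lower bound. Both proofs start from the intrinsic ultracontractivity estimate \eqref{a14.2}, and your first paragraph is exactly right: it gives $\wt p_t(x,y)\asymp\vphi(y)/\int_\Lambda\vphi$ uniformly in $x$ for $t$ large, so the entire lemma reduces to $\vphi(y)\ge c\,\dist(y,\Lambda^c)^\gamma$. That reduction is cleaner than what the paper actually does: the paper never isolates a bound on $\vphi$, but rather works directly with $p_t$, comparing it against the time-independent parabolic function $g(t,\cdot)=h_2(\cdot)$ via the Fabes--Garofalo--Salsa backward parabolic boundary Harnack principle, where $h_2$ is built from Burkholder's explicit positive harmonic function $h_1(x)=|x-y|^\gamma f(\theta)$ in an interior cone and an application of the elliptic maximum principle. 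Your route replaces all of that analytic machinery with a probabilistic chaining/scaling argument inside the interior cone, run on the eigenfunction through the representation $\vphi=e^{\lambda T}P_T^\Lambda\vphi$. The two arguments are secretly the same geometry (the exponent $\gamma$ is in both cases dictated by the cone aperture, hence by the Lipschitz constant), but yours is more elementary in that it needs no boundary Harnack input beyond Brownian scaling; and as you note, one can also simply cite the bound $\vphi\gtrsim\dist(\cdot,\Lambda^c)^\gamma$ for bounded Lipschitz domains, collapsing the proof to the short IU computation.

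Two minor points you should tighten if writing this out in full. First, at the Markov step after the chain reaches the cone top, the remaining time $T-\sigma$ is random; you need $\inf\{\P_z(\tau_\Lambda>s,\ B_s\in K): z\in K,\ s\in[T-T_0,T]\}>0$, which holds by compactness and strict positivity of $p_s$ on $\Lambda\times\Lambda$ for $s>0$ once $T>T_0$, but it is worth saying. Second, the claim that the nearest boundary point $\xi$ to $y$ serves as cone vertex with $y$ ``on axis'' needs the chart: take $\xi$ to be the point of $\partial\Lambda$ directly below $y$ in the local graph coordinates (not the metric-nearest point); then $y$ is on the cone axis and its height above $\xi$ is comparable to $\dist(y,\Lambda^c)$ with a constant depending only on the Lipschitz constant. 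With those two details filled in, the argument is complete.
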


Results of this type are well known but we could not find an exact reference for \eqref{a13.7}.

\begin{proof}[Proof of Lemma \ref{a11.4}]

Since $\Lambda$ is a bounded Lipschitz domain with the Lipschitz constant less than 1, we can find $\rho>0$, a finite number $k_1$ of 
points $x^k\in \prt \Lambda$, orthonormal coordinate systems $CS_k$ and  Lipschitz functions $\psi_k:\R^{d-1}\to\R$ with the Lipschitz constant less than 1, satisfying the following conditions.
For $y\in \prt \Lambda$ and $r>0$, let 
\begin{align*}
\Gamma_k(y,r) = \{y+(x_1,\dots,x_d): x_1^2+\dots+x_{d-1}^2 \leq r^2, x_d^2\leq r^2\}, \qquad \text{  in  } CS_k.
\end{align*}
Moreover, in $CS_k$,
\begin{align*}
&\Lambda\cap \Gamma_k(x^k,\rho) = \{(x_1,\dots,x_d)\in\Gamma_k(x^k,\rho): x_d > \psi_k(x_1,\dots,x_{d-1})\}, \qquad k=1,\dots,k_1,\\
&\{x\in\Lambda: \dist(x, \Lambda^c) \leq \rho/4\} \subset \bigcup_{1\leq k\leq k_1} \Gamma_k(x^k,\rho/2).
\end{align*}

Fix $CS_k$ and $y\in \prt \Lambda \cap \Gamma_k(x^k, \rho/2)$.
Let $\n_k = (0,\dots,0,1)$ in $CS_k$.
Let $L_y=\{y+a \n_k: a> 0\}$ and let $\calC_{y,\alpha}$ be the open cone consisting of all half-lines with the endpoint at $y$ forming the angle less than $\alpha$ with $L_y$. Note that
\begin{align*}\notag
&\Gamma_k(y,\rho/2) \cap \Lambda \subset \Gamma_k(x^k,\rho)\cap \Lambda,\\
& \calC_{y,\pi/8} \cap \Gamma_k(y,\rho/2) \subset
\Lambda\cap \Gamma_k(y,\rho/2).
\end{align*}

It is well known (see, e.g., ``Application'' on page 192 in \cite{Burk}) that
there exists a positive harmonic function $h_1(x)$, $x\in \calC_{y,\pi/8}$, such that $h_1(x)=0$ for $x\in \prt \calC_{y,\pi/8}$ and $h_1(x) = |x-y|^\gamma f(\theta)$ for some $\gamma>0$ and a function $f$, where $\theta$ is the angle between the line segment $\ol {0,x}$ and $L_y$. We can and will assume that $f(0)=1$.

Let $h_2(x)$ be a positive harmonic function in $\Lambda\cap \Gamma_k(y,\rho/2)$, with the boundary values $h_2(x) = h_1(x)$ for $x\in \calC_{y,\pi/8}\cap \prt \Gamma_k(y,\rho/2)\cap \Lambda$ and $h_2(x) = 0$ for $x\in 
\prt(\Lambda\cap \Gamma_k(y,\rho/2)) \setminus \calC_{y,\pi/8}$.
Then $h_2(x) \geq h_1(x)$ for $ x \in \prt(\calC_{y,\pi/8}\cap  \Gamma_k(y,\rho/2))$. It follows by the
elliptic maximum principle that $h_2(x) \geq h_1(x)$ for $ x \in L_y \cap \Lambda\cap \Gamma_k(y,\rho/2)$. Hence,
\begin{align}\label{a13.1}
&h_2(x) \geq |x-y|^\gamma, \qquad x\in L_y,\\
&h_2(x) \leq c_3 :=\sup \left\{h_1(x): x\in \calC_{y,\pi/8}, |x-y| \leq 2\rho\right\}, \qquad x\in \Lambda\cap \Gamma_k(y,\rho/2).\label{a13.2}
\end{align}

Recall that $p_t(x,z)$ denotes the transition density for Brownian motion killed upon exiting $\Lambda$. Let $g(t,x) = h_2(x)$ for $x\in \Lambda\cap \Gamma_k(y,\rho/2)$ and $t\geq 0$. The functions $(z,t)\to p_t(x,z)$ and 
$(z,t)\to g(t,z) $ are parabolic, i.e.,  they are solutions to the heat equation and they have zero boundary values on $(\prt \Lambda\cap \Gamma_k(y,\rho/2))\times \R$. Hence, by  \cite[Thm. 1.6]{FGS}, for $s>0$, there exist $c_4$ and $a>b_1>0$ depending only on $\Lambda$ and $s$, such that for $t\geq s$, $0<b<b_1$ and $x\in \Lambda$,
\begin{align*}
\frac{g(t,y+b\n_k)}{p_t(x,y+b\n_k)} 
\leq c_4\frac{g(t+2a^2,y+a\n_k)}{p_{t-2a^2}(x,y+a\n_k)},
\end{align*}
so, using \eqref{a13.1},
\begin{align}\notag
&p_t(x,y+b\n_k) \geq c_4^{-1}
\frac{g(t,y+b\n_k)}{g(t+2a^2,y+a\n_k)}
p_{t-2a^2}(x,y+a\n_k)\\
&\qquad= c_4^{-1}
\frac{h_2(y+b\n_k)}{h_2(y+a\n_k)}
p_{t-2a^2}(x,y+a\n_k)
\geq c_4^{-1}
\frac{b^\gamma}{h_2(y+a\n_k)}
p_{t-2a^2}(x,y+a\n_k).\label{a13.3}
\end{align}

Let $\Lambda_a^c$ be the set of all points of the form $y+ a_1\n_k$ with $0<a_1<a$, where $y$ can be any point in $ \prt \Lambda \cap \Gamma_k(x^k, \rho/2)$ and $k$ can be any $1,\dots, k_1$. Let $\Lambda_a=\Lambda \setminus \Lambda_a^c$.

By \eqref{a14.2}, for any $\eta>0$, $a$ as above, sufficiently large $s_1$ and $t\geq s_1$,
\begin{align}\label{a13.4}
&p_{t-2a^2}(x,y+a\n_k) \geq  p_{t}(x,y+a\n_k)/2,\\
&\inf_{z\in \Lambda_a} p_{t}(x,z) \geq 
(1-\eta)e^{-\lambda t} \vphi(x) \inf_{z\in \Lambda_a}\vphi(z).\label{a13.5}
\end{align}
Since $\dist(\Lambda_a, \Lambda^c) >0$, $c_5:=\inf_{z\in \Lambda_a}\vphi(z)>0$.
This observation, \eqref{a13.2}, \eqref{a13.3}, \eqref{a13.4} and \eqref{a13.5} yield
\begin{align}\label{a13.6}
p_t(x,y+b\n_k) 
\geq c_4^{-1}
\frac{b^\gamma}{c_3}(1-\eta)e^{-\lambda t} \vphi(x) c_5
= c_6 b^\gamma e^{-\lambda t} \vphi(x).
\end{align}

By \eqref{a14.2}, for sufficiently large $t$, 
\begin{align*}
\int_\Lambda p_t(x,z)dz \leq 2 e^{-\lambda t} \vphi(x)\int_\Lambda \vphi(z)dz = c_7 e^{-\lambda t} \vphi(x) .
\end{align*}
It follows from this and \eqref{a13.6} that
\begin{align*}
\wt p_t(x,y+b\n_k)=
\frac{p_t(x,y+b\n_k)}
{\int_\Lambda p_t(x,z)dz} 
\geq 
\frac{ c_6 b^\gamma e^{-\lambda t} \vphi(x)}
{c_7 e^{-\lambda t} \vphi(x)} = c_8 b^\gamma.
\end{align*}
This proves the lower bound in \eqref{a13.7}
for $y\in \Lambda_a^c$ because $b \geq \dist(y+b\n_k,\Lambda^c) $. 
We can make the bound valid for all $y\in \Lambda$ by making  $c_1>0$ smaller, if necessary.

Since 
\begin{align*}
 \wt p_{t}(x,y) = \frac { p_{t}(x,y)}{\int_{\Lambda} p_{t}(x,y)dy},
\end{align*}
it follows from \eqref{a14.2} that  for any $\eta_1>0$ there exists a $u>0$ 
such that for $s\geq u$ and  $x,y\in \Lambda$,
\begin{align}\label{rev30.11}
\left(\int_{\Lambda}\vphi(x)\, dx\right)^{-1}(1-\eta_1) \leq
\frac{\wt p_{s}(x,y)}{ \vphi(y) }\leq \left(\int_{\Lambda}\vphi(x)\, dx\right)^{-1}(1+\eta_1).
\end{align}
The upper bound in \eqref{a13.7} follows because $\sup_{y\in\Lambda} \vphi(y) <\infty$.
\end{proof}

\begin{remark}\label{rev30.10}
Suppose that  $ \P'$ and $ \P'' $ are probability measures  on space-time trajectories $\{(B_s,s), 0\leq s \leq t\}$ such that under each of these measures,
the process $\{B_s,0\leq s \leq t\}$ is the space component of a space-time  Brownian motion conditioned to exit $\Lambda\times (0,t)$ via $\Lambda \times \{t\}$, but with different exit distributions. 
It follows from the theory of $h$-processes (i.e., conditioned Brownian motion; see \cite{Doob})
that under both $ \P'$ and $ \P'' $,
the process $\{B_s,0\leq s \leq t\}$ conditioned by $\{B_0=x, B_{t} = y\}$ has the same distribution.
It follows that for $u\leq t$, the Radon-Nikodym derivative $d\P'/d\P''$ on the $\sigma$-field $\sigma(B_s,0\leq s \leq u)$  depends only on $B_u$. 
\end{remark}

Recall that for $x,y\in\Lambda$, $\P_{x,y,s}$ denotes the distribution of $\{B_t, 0 \leq t \leq s\}$ where $B$ is Brownian motion  starting from $B_0=x$, conditioned  to stay inside $\Lambda$ on the interval $[0,s]$ and further conditioned by $\{B_s=y\}$.

\begin{lemma}\label{a11.1}
Fix any $t_1,\eps>0$. There exists
 $s_1>t_1$  so large that for all $x,y_1,y_2\in \Lambda$ and $t_2\geq s_1$,
\begin{align}\label{m28.1}
1-\eps\leq
\frac{d \P_{x,y_1,t_2}}{d \P_{x,y_2,t_2}} \leq 1+\eps
\end{align}
on the $\sigma$-field $\calF^B_{t_1}:=\sigma(B_t, 0\leq t \leq t_1)$.
\end{lemma}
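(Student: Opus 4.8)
The plan is to make the Radon--Nikodym derivative on $\calF^B_{t_1}$ completely explicit and then control it with the intrinsic ultracontractivity estimate \eqref{a14.2}.

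First I would record the standard $h$-process description of $\P_{x,y,t_2}$: it is the Doob $h$-transform of Brownian motion killed on exiting $\Lambda$, associated with the space-time parabolic function $h(z,s)=p_{t_2-s}(z,y)$ (which vanishes on $\prt\Lambda$ and, at time $t_2$, everywhere except at $y$), started from $h(x,0)=p_{t_2}(x,y)$. Consequently, restricted to the $\sigma$-field $\calF^B_{t_1}$, the law $\P_{x,y,t_2}$ is absolutely continuous with respect to the law of Brownian motion started at $x$ and killed on exiting $\Lambda$, with density $p_{t_2-t_1}(B_{t_1},y)/p_{t_2}(x,y)$ --- this is exactly the setting of Remark \ref{rev30.10}, so the derivative depends on the path only through $B_{t_1}$. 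Since the reference measure (killed Brownian motion from $x$) and the event $\{B_s\in\Lambda,\ 0\le s\le t_1\}$ are the same for $y_1$ and $y_2$, dividing the two densities gives, $\P_{x,y_2,t_2}$-a.s. on $\calF^B_{t_1}$,
\begin{align}\label{a11.1.rn}
\frac{d\P_{x,y_1,t_2}}{d\P_{x,y_2,t_2}}\bigg|_{\calF^B_{t_1}}
=\frac{p_{t_2-t_1}(B_{t_1},y_1)}{p_{t_2-t_1}(B_{t_1},y_2)}\cdot\frac{p_{t_2}(x,y_2)}{p_{t_2}(x,y_1)}.
\end{align}

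Next I would apply \eqref{a14.2}. Given $\eps$, choose $\eta>0$ so small that $\left((1+\eta)/(1-\eta)\right)^2\le 1+\eps$ and $\left((1-\eta)/(1+\eta)\right)^2\ge 1-\eps$, let $u=u(\eta)$ be the corresponding time in \eqref{a14.2}, and set $s_1=t_1+u$, so that $s_1>t_1$. For $t_2\ge s_1$ one has both $t_2\ge u$ and $t_2-t_1\ge u$, so \eqref{a14.2} applies to each of the four heat kernels in \eqref{a11.1.rn}: each $p_s(w,v)$ there equals $e^{-\lambda s}\vphi(w)\vphi(v)$ up to a factor lying in $[1-\eta,1+\eta]$. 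In the resulting expression the exponential factors $e^{-\lambda(t_2-t_1)}$ and $e^{-\lambda t_2}$ cancel, the factor $\vphi(B_{t_1})$ cancels within the first ratio and $\vphi(x)$ within the second, and the pair $\vphi(y_1),\vphi(y_2)$ cancels between the two ratios; what remains is a product of four factors each in $[1-\eta,1+\eta]$, so the quantity in \eqref{a11.1.rn} lies between $\left((1-\eta)/(1+\eta)\right)^2$ and $\left((1+\eta)/(1-\eta)\right)^2$, hence between $1-\eps$ and $1+\eps$. Because the time $u$ in \eqref{a14.2} does not depend on $w,v$, this bound holds uniformly over $x,y_1,y_2\in\Lambda$, which is \eqref{m28.1}.

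There is no serious obstacle. The one point requiring care is the assertion that the Radon--Nikodym derivative on $\calF^B_{t_1}$ is a genuine function of $B_{t_1}$, with no leftover dependence on the path before time $t_1$; this is precisely Remark \ref{rev30.10} combined with the elementary $h$-transform identity recalled above. After that, the cancellation of $\vphi(B_{t_1})$ against $\vphi(x)$, and the cross-cancellation of $\vphi(y_1)$ against $\vphi(y_2)$, is exactly what makes the estimate uniform in all three points, and the rest is bookkeeping to convert the $\eta$-error in \eqref{a14.2} into the required $\eps$-error.
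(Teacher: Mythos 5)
Your proof is correct and follows essentially the same route as the paper's. The paper writes the Radon--Nikodym derivative on $\calF^B_{t_1}$ using the conditioned transition density $\wt p_t$ (via Remark \ref{rev30.10}) and then applies \eqref{rev30.11}, while you write the equivalent expression in terms of the killed kernel $p_t$ and apply \eqref{a14.2} directly; since $\wt p_t(x,\cdot)$ is $p_t(x,\cdot)$ up to a normalizing factor depending only on $x$ and $t$, the two Radon--Nikodym formulas coincide, and the cancellations you identify ($\vphi(B_{t_1})$, $\vphi(x)$, and the cross-cancellation of $\vphi(y_1)$, $\vphi(y_2)$) are exactly what the paper is relying on implicitly.
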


\begin{proof}

Recall that 
$\wt p_t(x,y)$ denotes the transition density for Brownian motion conditioned to stay in $\Lambda$ on $[0,t]$.
By the Markov property applied at $t_1$,
the density of $B_{t_1}$ under $\P_{x,y_1,t_2}$ evaluated at $z$ is
\begin{align*}
\frac{ \wt p_{t_1}(x, z) \wt p_{t_2-t_1}(z,y_1)}
{\wt p _{t_2}(x,y_1)},
\end{align*}
and the analogous formula holds for $\P_{x,y_2,t_2}$.
This and Remark \ref{rev30.10} imply that
\begin{align}\label{a11.3}
\left. \frac{d \P_{x,y_1,t_2}}{d \P_{x,y_2,t_2}} \right|
_{\calF^B_{t_1}}
&=
\frac{ \wt p_{t_1}(x, z) \wt p_{t_2-t_1}(z,y_1)}
{\wt p _{t_2}(x,y_1)}
\cdot
\frac
{\wt p _{t_2}(x,y_2)}
{ \wt p_{t_1}(x, z) \wt p_{t_2-t_1}(z,y_2)}\\
&=
\frac{  \wt p_{t_2-t_1}(z,y_1)\wt p _{t_2}(x,y_2)}
{\wt p _{t_2}(x,y_1)\wt p_{t_2-t_1}(z,y_2)}
.\notag
\end{align}
If we assume that $B$ is defined on the canonical probability space then we can identify $\{B_t, 0\leq t \leq t_2\}$ with $\{\omega_t, 0\leq t \leq t_2\}$, where $\omega \in C([0,t_2], \Lambda)$, and rewrite \eqref{a11.3} as
\begin{align*}
\left. \frac{d \P_{x,y_1,t_2}}{d \P_{x,y_2,t_2}} \right|
_{\calF^B_{t_1}}(\omega)
=
\frac{  \wt p_{t_2-t_1}(\omega_{t_1},y_1)\wt p _{t_2}(x,y_2)}
{\wt p _{t_2}(x,y_1)\wt p_{t_2-t_1}(\omega_{t_1},y_2)}.
\end{align*}
Combining this with \eqref{rev30.11}  completes the proof.
\end{proof}

\begin{lemma}\label{a11.6}
Recall from \eqref{m29.3} that $\wt\P_{t_2}$ refers to the Wiener measure conditioned on
staying inside $\Lambda$ up to $t_2$.
We have assumed  that $f$ depends only on the values of the process in $[0,t_1]$ and $\wt \E_{t_2} (f) >0$. 
Assume that $t_2 - t_1 \geq s_1$ where $s_1$ is given in Lemma \ref{a11.4}.
Then there exist $c_1$ and $c_2$ such that for every $Q\in\bQ$,
\begin{align}\label{a11.7}
c_1 n^{-\alpha d+ \gamma (-\alpha+3\delta/4)}
&\leq	\wt\P_{t_2}(G_Q )
\leq c_2 n^{-\alpha d},\\
c_1 \wt \E_{t_2} (f) n^{-\alpha d+ \gamma (-\alpha+3\delta/4)}
&\leq	\wt\E_{t_2}(f_Q ) \leq c_2 \wt \E_{t_2} (f) n^{-\alpha d}.
\label{a11.8}
\end{align}
\end{lemma}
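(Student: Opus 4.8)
The plan is to reduce both \eqref{a11.7} and \eqref{a11.8} to a single pointwise estimate, namely that for every $x\in\Lambda$ and every $Q\in\bQ$,
\begin{equation*}
c_1\, n^{-\alpha d+\gamma(-\alpha+3\delta/4)}\ \le\ \wt\P_{t_2}\left(B_{t_2}\in Q\mid B_{t_1}=x\right)\ \le\ c_2\, n^{-\alpha d},
\end{equation*}
after which both displays follow by conditioning at time $t_1$ and using that $f$ is $\calF^B_{t_1}$-measurable.

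First I would identify the law $\wt\P_{t_2}$, conditioned at time $t_1$, as an ordinary conditioned Brownian motion. Since $\{\tau_\Lambda>t_2\}=\{\tau_\Lambda>t_1\}\cap\{B\text{ stays in }\Lambda\text{ on }[t_1,t_2]\}$, the Markov property of Brownian motion at $t_1$ shows that under $\wt\P_{t_2}$ the conditional law of $\{B_s,\ t_1\le s\le t_2\}$ given $\calF^B_{t_1}$ depends only on $B_{t_1}$ (so $\wt\P_{t_2}$ is Markov at $t_1$), and given $\{B_{t_1}=x\}$, with $x\in\Lambda$ which holds $\wt\P_{t_2}$-a.s., it is the law of Brownian motion started at $x$ and conditioned to stay in $\Lambda$ throughout the interval $[t_1,t_2]$ of length $t_2-t_1$; there is no leftover conditioning from times beyond $t_2$ because $\wt\P_{t_2}$ constrains the path only up to $t_2$. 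Consequently the $\wt\P_{t_2}$-conditional density of $B_{t_2}$ given $B_{t_1}=x$ is $y\mapsto\wt p_{t_2-t_1}(x,y)$, with $\wt p$ as in Lemma \ref{a11.4}; the initial measure $\mu_n$ plays no role, since the resulting bound will be uniform in the starting point $x$.

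Because $t_2-t_1\ge s_1$ by hypothesis, Lemma \ref{a11.4} applies with $t=t_2-t_1$ and gives $c_1\dist(y,\Lambda^c)^\gamma\le\wt p_{t_2-t_1}(x,y)\le c_2$ for all $x,y\in\Lambda$. Integrating over $y\in Q$ then yields the displayed two-sided bound: the upper bound uses only that $Q$ is a cube of side $n^{-\alpha}$ and hence of volume $n^{-\alpha d}$ (see \eqref{rev31.1}), while for the lower bound the defining property \eqref{m29.5} of $\bQ$ forces $\dist(y,\Lambda^c)\ge 3n^{-\alpha+3\delta/4}$ for every $y\in Q$, so that $\int_Q\dist(y,\Lambda^c)^\gamma\,dy\ge 3^\gamma\, n^{-\alpha d+\gamma(-\alpha+3\delta/4)}$.

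It remains to pass to $f_Q$. Since $f$ is bounded, non-negative and $\calF^B_{t_1}$-measurable (it depends only on $[0,t_1]$ and $t_1<t_2$), the Markov property at $t_1$ gives $\wt\E_{t_2}(f_Q)=\wt\E_{t_2}\big(f\cdot\wt\P_{t_2}(B_{t_2}\in Q\mid B_{t_1})\big)$, and inserting the two-sided bound inside the expectation (using $f\ge0$) gives \eqref{a11.8}; the special case $f\equiv1$ is \eqref{a11.7}. I expect the only step requiring genuine care to be the identification in the second paragraph --- checking that conditioning $\wt\P_{t_2}$ at time $t_1$ produces exactly Brownian motion conditioned to stay in $\Lambda$ on $[t_1,t_2]$, with nothing inherited from times past $t_2$ --- which is a standard though slightly delicate $h$-process manipulation; the remainder is elementary bookkeeping with the density bounds of Lemma \ref{a11.4} and the volume of a cube of side $n^{-\alpha}$.
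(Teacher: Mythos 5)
Your proof is correct, and it takes a genuinely different route from the paper's for \eqref{a11.8}. You prove both displays in one shot by conditioning at time $t_1$: using the Markov property of the $h$-process $\wt\P_{t_2}$ at time $t_1$, you identify the conditional density of $B_{t_2}$ given $B_{t_1}=x$ as $\wt p_{t_2-t_1}(x,\cdot)$, apply Lemma \ref{a11.4} (with $t=t_2-t_1\geq s_1$) to get the uniform two-sided bound on $\wt\P_{t_2}(G_Q\mid\calF^B_{t_1})$, and then the tower property together with $f$ being $\calF^B_{t_1}$-measurable immediately yields \eqref{a11.8}, with $f\equiv 1$ giving \eqref{a11.7}. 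The paper instead proves \eqref{a11.7} by applying Lemma \ref{a11.4} to $\wt p_{t_2}$ (the transition from time $0$) and proves \eqref{a11.8} via a bridge decomposition at the endpoints $(B_0, B_{t_2})$, invoking Lemma \ref{a11.1} to show $\E_{x,y,t_2}(f)$ is within a factor $1\pm\eps$ of a fixed $\E_{x,z,t_2}(f)$. Your approach is arguably cleaner: it bypasses Lemma \ref{a11.1} entirely and makes the role of the hypothesis $t_2-t_1\geq s_1$ transparent (that is literally the time-gap at which Lemma \ref{a11.4} is applied), whereas the paper's route for \eqref{a11.7} technically uses $t_2\geq s_1$ and its route for \eqref{a11.8} leans on the separate threshold built into Lemma \ref{a11.1}. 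The one step you flag as delicate --- that conditioning $\wt\P_{t_2}$ at time $t_1$ gives exactly Brownian motion started at $B_{t_1}$ conditioned to stay in $\Lambda$ on an interval of length $t_2-t_1$, with nothing inherited past $t_2$ --- is indeed correct and follows from the decomposition $\{\tau_\Lambda>t_2\}=\{\tau_\Lambda>t_1\}\cap\{\text{stay in }\Lambda\text{ on }[t_1,t_2]\}$ plus the Markov property of unconditioned Brownian motion, exactly as you sketch; it is essentially the content of Remark \ref{rev30.10}, which the paper invokes at the start of its own proof.
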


\begin{proof}
We will apply Remark \ref{rev30.10}.
The volume of $Q$ is $n^{-\alpha d}$. Thus \eqref{a11.7} follows from the assumption \eqref{m29.5} and Lemma \ref{a11.4}.

Let $\E_{x,y,t}$ denote the expectation corresponding to the probability measure $\P_{x,y,t}$ discussed in Lemma \ref{a11.1}. Using that lemma, for an arbitrary fixed $z\in \Lambda$,
\begin{align*}
&\wt\E_{t_2}(f_Q ) =
\int_Q \int_{\Lambda_1} \E_{x,y,t_2}(f) d\mu_n(dx) \wt \P_{t_2}(dy)
\leq \int_Q \int_{\Lambda_1}(1+\eps)\E_{x,z,t_2}(f) d\mu_n(dx) \wt \P_{t_2}(dy)
\\
&= (1+\eps)\wt\P_{t_2}(G_Q )
\int_{\Lambda_1}\E_{x,z,t_2}(f) d\mu_n(dx)  \\
&= (1+\eps)\wt\P_{t_2}(G_Q )
\int_\Lambda \int_{\Lambda_1}\E_{x,z,t_2}(f) d\mu_n(dx)  \wt \P_{t_2}(dy)\\
&\leq (1+\eps)^2\wt\P_{t_2}(G_Q )
\int_\Lambda \int_{\Lambda_1}\E_{x,y,t_2}(f) d\mu_n(dx)  \wt \P_{t_2}(dy)
=(1+\eps)^2\wt\P_{t_2}(G_Q )
\wt\E_{t_2}(f ) .
\end{align*}
Similarly,
\begin{align*}
&\wt\E_{t_2}(f_Q ) 
\geq(1-\eps)^2\wt\P_{t_2}(G_Q )
\wt\E_{t_2}(f ) .
\end{align*}
Now \eqref{a11.8} follows from  \eqref{a11.7}.
\end{proof}

Recall that $\mu$ and $\mu_n$ are probability measures supported in  $\Lambda_1\subset \Lambda$ and  $\mu_n$  converge weakly to $\mu$ as $n\to \infty$.
The next lemma follows essentially from the main theorem in \cite{Pin} but we need a specific order of quantifiers which does not seem to follow directly from that theorem. Recall that $\wh \P^\mu$  denotes the distribution of Brownian motion  conditioned to stay in $\Lambda$ forever, with the initial distribution 
$c \vphi(x)\mu(dx)$, where $c>0$ is the normalizing constant.

\begin{lemma} \label{a14.3}
For every $\eps>0$ and $t_1>0$ there exists $s_1$ such that for every positive continuous function $f$ on $C[0,t_1]$ with $\|f\|_\infty \leq 1$ and $t\geq s_1$,
\begin{align*}
1-\eps \leq 
\liminf_{n\to\infty}
\frac{\wt \E_t^{\mu_n}(f)}{\wh \E^\mu(f)} \leq
\limsup_{n\to\infty}
\frac{\wt \E_t^{\mu_n}(f)}{\wh \E^\mu(f)} \leq 1+\eps.
\end{align*}
\end{lemma}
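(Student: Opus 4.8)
The plan is to avoid invoking Pinsky's theorem as a black box and instead to work directly from the intrinsic ultracontractivity bound \eqref{a14.2}. The reason this is the right move is that \eqref{a14.2} lets me extract the \emph{entire} $t$-dependence as a multiplicative error $(1\pm\eta)$ that is uniform both in the test function $f$ and in $n$; the residual $n$-dependence is then handled, for each fixed $f$, by the weak convergence $\mu_n\to\mu$. Choosing $\eta$, and hence $s_1$, before taking the $n\to\infty$ limit is exactly what produces the order of quantifiers demanded by the statement.

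\emph{Step 1: reduce to a ratio of two $n$-independent functionals.} Because $f$ depends only on the values of the path on $[0,t_1]$, the Markov property applied at $t_1$ gives
\begin{equation*}
\wt\E_t^{\mu_n}(f)=\frac{\int_{\Lambda_1}\E_x\!\left[f(\{B_s\}_{0\le s\le t_1})\,\bone_{\tau_\Lambda>t_1}\,p_{t-t_1}(B_{t_1},\Lambda)\right]\mu_n(dx)}{\int_{\Lambda_1}p_t(x,\Lambda)\,\mu_n(dx)},
\end{equation*}
where $p_s(z,\Lambda):=\int_\Lambda p_s(z,w)\,dw$. Fix $\eta\in(0,1)$ and let $u=u(\eta)$ be as in \eqref{a14.2}. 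For $t\ge t_1+u$ we have $t-t_1\ge u$ and $t>u$, so \eqref{a14.2} applies to both $p_{t-t_1}$ and $p_t$; the prefactors $e^{-\lambda(t-t_1)}$ and $e^{-\lambda t}$ combine into $e^{\lambda t_1}$, and after cancelling the common factor $\int_\Lambda\vphi$ one obtains
\begin{equation*}
\tfrac{1-\eta}{1+\eta}\,e^{\lambda t_1}\,\Psi_f(\mu_n)\le \wt\E_t^{\mu_n}(f)\le \tfrac{1+\eta}{1-\eta}\,e^{\lambda t_1}\,\Psi_f(\mu_n),\qquad \Psi_f(\rho):=\frac{\int_{\Lambda_1}g_f\,d\rho}{\int_{\Lambda_1}\vphi\,d\rho},
\end{equation*}
with $g_f(x):=\E_x[f(\{B_s\}_{0\le s\le t_1})\,\bone_{\tau_\Lambda>t_1}\,\vphi(B_{t_1})]$. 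On the other hand, since $\vphi$ has eigenvalue $\lambda$ we have $\int_\Lambda p_{t_1}(x,w)\vphi(w)\,dw=e^{-\lambda t_1}\vphi(x)$, and a direct computation with the parabolic $h$-transform $h(x,s)=e^{\lambda s}\vphi(x)$ (with initial law $c\vphi(x)\mu(dx)$, $c=1/\int\vphi\,d\mu$) shows that on $C[0,t_1]$ one has \emph{exactly} $\wh\E^\mu(f)=e^{\lambda t_1}\Psi_f(\mu)$. Hence, for every $t\ge t_1+u(\eta)$, every $n$, and every admissible $f$,
\begin{equation*}
\frac{1-\eta}{1+\eta}\cdot\frac{\Psi_f(\mu_n)}{\Psi_f(\mu)}\le \frac{\wt\E_t^{\mu_n}(f)}{\wh\E^\mu(f)}\le \frac{1+\eta}{1-\eta}\cdot\frac{\Psi_f(\mu_n)}{\Psi_f(\mu)}.
\end{equation*}

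\emph{Step 2: pass to the limit and conclude.} Since $\dist(\Lambda_1,\Lambda^c)>0$, the set $\overline{\Lambda_1}$ is a compact subset of $\Lambda$; on it $\vphi$ is continuous and bounded away from $0$, and $g_f$ is continuous, bounded, and bounded away from $0$ (strict positivity of $g_f$ on $\Lambda$ follows from $f>0$, $\vphi>0$ and $\P_x(\tau_\Lambda>t_1)>0$). The continuity of $g_f$ is obtained by translating to a fixed Brownian motion and applying dominated convergence, using that every boundary point of a Lipschitz domain is regular, so that $x\mapsto\bone_{\tau_\Lambda>t_1}$ is a.s.\ continuous, together with the atomlessness of the exit-time law, so that $\P_x(\tau_\Lambda=t_1)=0$. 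Consequently $\Psi_f(\mu)>0$, and weak convergence $\mu_n\to\mu$ gives $\int g_f\,d\mu_n\to\int g_f\,d\mu$ and $\int\vphi\,d\mu_n\to\int\vphi\,d\mu>0$, hence $\Psi_f(\mu_n)\to\Psi_f(\mu)$. Now, given $\eps>0$, choose $\eta>0$ so small that $(1+\eta)/(1-\eta)\le 1+\eps$ and $(1-\eta)/(1+\eta)\ge 1-\eps$, and set $s_1:=t_1+u(\eta)$; this depends only on $\eps$ and $t_1$. For any $t\ge s_1$ and any positive continuous $f$ with $\|f\|_\infty\le 1$, taking $\limsup_{n\to\infty}$ and $\liminf_{n\to\infty}$ in the last display and using $\Psi_f(\mu_n)/\Psi_f(\mu)\to 1$ yields $\limsup_n \wt\E_t^{\mu_n}(f)/\wh\E^\mu(f)\le 1+\eps$ and $\liminf_n \wt\E_t^{\mu_n}(f)/\wh\E^\mu(f)\ge 1-\eps$, which is the claim.

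The essential subtlety, flagged before the statement, is the order of quantifiers: $s_1$ must be selected without reference to $f$. The maneuver that makes this possible is to carry out the ultracontractivity estimate \emph{first}, isolating the only quantity in which $f$ and $n$ interact as the ratio $\Psi_f(\mu_n)/\Psi_f(\mu)$, which tends to $1$ for each fixed $f$; once this is in place, nothing depends on $f$ except through a limit in $n$ that has already been taken. I expect the only genuinely technical point to be the continuity of $g_f$ on $\overline{\Lambda_1}$, which is where the Lipschitz (hence boundary-regular) character of $\Lambda$ and the atomlessness of the exit time are used so that weak convergence of $\mu_n$ can be applied.
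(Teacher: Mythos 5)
Your proof is correct, and although it uses the same key quantitative ingredient as the paper---the intrinsic ultracontractivity estimate \eqref{a14.2}---the organization is genuinely different and arguably cleaner. The paper proves Lemma \ref{a14.3} by first comparing the \emph{bridge laws} $\wt\P_t(B_{t_1}\in dy\mid B_0=x,B_t=v)$ and $\wh\P(B_{t_1}\in dy\mid B_0=x,B_t=z)$ pointwise via \eqref{a14.2}, then separately comparing the time-$0$ marginals $\wt\P_t(B_0\in dx)$ and $\wh\P(B_0\in dx)$ (which requires explicit bookkeeping of normalizing constants $c_n$, $c_*$, $c$), and finally invoking a continuity result from \cite{CB} together with a coupling of initial conditions to transfer from $\mu_n$ to $\mu$. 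Your route, by contrast, applies the Markov property at $t_1$ once, feeds \eqref{a14.2} into both $p_{t-t_1}(\cdot,\Lambda)$ and $p_t(\cdot,\Lambda)$, and observes that the $h$-transformed expectation satisfies \emph{exactly} $\wh\E^\mu(f)=e^{\lambda t_1}\Psi_f(\mu)$; everything then reduces to a single convergence $\Psi_f(\mu_n)\to\Psi_f(\mu)$, which follows from weak convergence plus the continuity and positivity of $g_f$ and $\vphi$ on the compact set $\overline{\Lambda_1}\subset\Lambda$. This bypasses the normalizing-constant manipulation and the coupling argument entirely. What each approach buys: the paper's version establishes the intermediate pointwise bound \eqref{rev31.2} on $\wt\E_t(f\mid B_0=x)/\wh\E(f\mid B_0=x)$, which has independent interest and parallels Lemma \ref{a11.1}; your version is more compact and makes it especially transparent where the order of quantifiers ($s_1$ chosen before $n$ and before $f$) comes from, since the only $f$- and $n$-dependent quantity, $\Psi_f(\mu_n)/\Psi_f(\mu)$, is isolated in a form where its limit is manifestly $1$. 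The one point you treat informally---continuity of $g_f$ on $\overline{\Lambda_1}$---plays the same role that \cite[Cor.~1]{CB} plays in the paper's argument; your sketch (translation to a fixed Brownian motion, boundary regularity of Lipschitz domains, atomlessness of the exit time, dominated convergence) is the standard way to fill it in and is sound.
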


\begin{proof}
Let $\vphi>0$ denote the first eigenfunction of  $(-\frac 1 2) \Delta$ in $\Lambda$ with Dirichlet boundary conditions and let $\lambda>0$ be the corresponding eigenvalue.
The function $h(x,t)= e^{\lambda t} \vphi(x)$ is parabolic in $\Lambda\times (0,\infty)$.
The transition density $\wh p_t(x,y)$ for Brownian motion conditioned not to exit $\Lambda$  is given by, for $t>0$ and $x,y\in \Lambda$,
\begin{align}\label{a13.9}
\wh p_t(x,y) = \frac 1 {h(x,0)} p_t(x,y) h(y,t)
=e^{\lambda t} \frac{\vphi(y)}{\vphi(x)}p_t(x,y).
\end{align}

Under both $\wt \P_t$ and $\wh \P $,
the process $\{B_s,0\leq s \leq t_1\}$ is the space component of a space-time  Brownian motion conditioned to exit $\Lambda\times (0,t_1)$ via $\Lambda \times \{t_1\}$, but with different exit distributions. 
We will show that for every $\eps>0$ there exists $s_1$ such that for $t\geq s_1$ and $x,y,v,z\in \Lambda$,
\begin{align}\label{a14.1}
1-\eps \leq
\frac{\wt \P_t(B_{t_1}\in dy \mid B_0=x, B_t = v)}{\wh \P (B_{t_1} \in dy \mid B_0=x, B_t=z)}
\leq 1+\eps.
\end{align}
By \eqref{a13.9},
\begin{align}\label{rev20.2}
\wt \P_t(B_{t_1}\in dy \mid B_0=x, B_t = v)
&= \frac { p_{t_1}(x,y) p_{t-t_1}(y,v)}{ p_{t}(x,v)}dy,\\
\wh \P (B_{t_1}\in dy \mid B_0=x, B_t = z)
&= \frac {\wh p_{t_1}(x,y)\wh p_{t-t_1}(y,z)}{\wh p_{t}(x,z)}dy \notag \\
&= \frac {e^{\lambda t_1} \frac{\vphi(y)}{\vphi(x)} p_{t_1}(x,y) 
e^{\lambda (t-t_1)} \frac{\vphi(z)}{\vphi(y)}p_{t-t_1}(y,z)}
{e^{\lambda t} \frac{\vphi(z)}{\vphi(x)} p_{t}(x,z)}dy \notag \\
&= \frac { p_{t_1}(x,y) 
p_{t-t_1}(y,z)}{ p_{t}(x,z)}dy.\notag
\end{align}
Hence,
\begin{align*}
\frac{\wt \P_t(B_{t_1}\in dy \mid B_0=x, B_t = v)}{\wh \P (B_{t_1} \in dy \mid B_0=x, B_t=z)}
=\frac {  p_{t-t_1}(y,v) p_{t}(x,z)}{ p_{t}(x,v)p_{t-t_1}(y,z)}.
\end{align*}

By \eqref{a14.2}, for any $\eta>0$ there exists $s_2$ such that for $t-t_1\geq s_2$  and $x,y,v,z\in \Lambda$,
\begin{align*}
\frac{\wt \P_t(B_{t_1}\in dy \mid B_0=x, B_t = v)}{\wh \P (B_{t_1} \in dy \mid B_0=x, B_t=z)}
\leq
\frac{(1+\eta)^2}{(1-\eta)^2}
\frac { 
e^{-\lambda(t-t_1)} \vphi(y) \vphi(v)
e^{-\lambda t} \vphi(x) \vphi(z)
}{ 
e^{-\lambda t} \vphi(x) \vphi(v)
e^{-\lambda(t-t_1)} \vphi(y) \vphi(z)}
=\frac{(1+\eta)^2}{(1-\eta)^2} .
\end{align*}
This proves the upper bound in \eqref{a14.1}. The lower bound can be proved in an analogous way.

Suppose that $f$ is a positive continuous function  on $C[0,t_1]$ with $\|f\|_\infty \leq 1$.
Recall from Remark \ref{rev30.10}
that under both $\wt \P_t$ and $\wh \P $,
the process $\{B_s,0\leq s \leq t_1\}$ conditioned by $\{B_0=x, B_{t_1} = y\}$ has the same distribution. Hence, \eqref{a14.1} implies that 
 for every $\eps>0$ there exists $s_1$ such that for $t\geq s_1$ and $x,v,z\in \Lambda$,
\begin{align*}
1-\eps \leq
\frac{\wt \E_t(f \mid B_0=x, B_t = v)}{\wh \E (f \mid B_0=x, B_t=z)}
\leq 1+\eps.
\end{align*}
Since $v,z\in \Lambda$ are arbitrary, for every $\eps>0$ there exists $s_1$ such that for $t\geq s_1$ and $x\in \Lambda$,
\begin{align}\label{rev31.2}
1-\eps \leq
\frac{\wt \E_t(f \mid B_0=x)}{\wh \E (f \mid B_0=x)}
\leq 1+\eps.
\end{align}

By \eqref{a14.2}, for any $\eta>0$, sufficiently large $t$,  normalizing constants $c_n$ and $c_*$,
\begin{align}\label{rev31.4}
\wt\P_t(B_0\in dx) &= c_n \mu_n(dx)\int_\Lambda p_t(x, y) dy
\geq (1-\eta)c_n \mu_n(dx)\int_\Lambda e^{-\lambda t} \vphi(x) \vphi(y) dy\\
&= c_n c_* (1-\eta)\vphi(x) \mu_n(dx),\notag
\end{align}
and similarly 
\begin{align}\label{rev31.5}
\wt\P_t(B_0\in dx) \leq  c_nc_* (1+\eta)\vphi(x) \mu_n(dx).
\end{align}
Recall that $\mu_n$ converge weakly to $\mu$ and  $\wh \P(B_0\in dx)=c \vphi(x) \mu(dx)$ for a normalizing constant $c$. It follows from \eqref{rev31.5} that
\begin{align*}
1&\leq\liminf_{n\to\infty}\int c_nc_* (1+\eta)\vphi(x) \mu_n(dx)
=
\liminf_{n\to\infty}\int \frac{c_nc_* (1+\eta)}{c} c\vphi(x) \mu_n(dx)\\
&=
\liminf_{n\to\infty}\int \frac{c_nc_* (1+\eta)}{c} c\vphi(x) \mu(dx)
=\liminf_{n\to\infty} \frac{c_nc_* (1+\eta)}{c},
\end{align*}
so
\begin{align*}
\liminf_{n\to\infty} c_n \geq c/(c_*(1+\eta)).
\end{align*}
This and \eqref{rev31.4} imply that
\begin{align}\label{rev31.12}
\liminf_{n\to\infty} \wt\P_t&(B_0\in dx) \geq   \liminf_{n\to\infty}
c_n c_* (1-\eta)\vphi(x) \mu_n(dx)\\
& \geq   \liminf_{n\to\infty}
\frac{1-\eta}{1+\eta} c\vphi(x) \mu_n(dx)
=\frac{1-\eta}{1+\eta} \wh \P(B_0 \in dx) .\notag
\end{align}
Similarly,
\begin{align}\label{rev31.11}
\limsup_{n\to\infty} \wt\P_t&(B_0\in dx) \leq  
\frac{1+\eta}{1-\eta} \wh \P(B_0 \in dx) .
\end{align}

We have assumed in
Theorem \ref{a1.7} that $\mu$ is  supported in a set $\Lambda_1\subset \Lambda$ such that $\dist(\Lambda_1, \Lambda^c) >0$.
It follows from \cite[Cor.~1]{CB} that if $x^k \in \Lambda_1$ and $x^k\to x^\infty\in\Lambda_1$ then  $\wt \E_t(f\mid B_0=x^k) \to\wt \E_t(f\mid B_0=x^\infty) $. 
A standard coupling argument can be applied to construct processes $B^n$ with the same transition probabilities as those of $B$, on the same probability space, with the initial distributions $\mu_n$ for $B^n$  and $\mu$ for $B$, and such that $|B^n_0 - B_0|\to0$, a.s., as $n\to \infty$. 
These observations, \eqref{rev31.2} and \eqref{rev31.11} imply that
\begin{align*}
&\limsup_{n\to\infty}\wt \E_t(f )
=\limsup_{n\to\infty}
 \int \wt \E_t(f \mid B_0=x)\wt \P_t(B_0\in dx)\\
&\leq \limsup_{n\to\infty}
 \int \wt \E_t(f \mid B_0=x)\frac{1+\eta}{1-\eta}\wh \P(B_0\in dx)\\
&\leq 
 \limsup_{n\to\infty}
 \int(1+\eps) \wh \E(f \mid B_0=x)\frac{1+\eta}{1-\eta}\wh \P(B_0\in dx)
= (1+\eps) \frac{1+\eta}{1-\eta}\wh \E(f ).
\end{align*}
Similarly, one can use \eqref{rev31.2} and \eqref{rev31.12} to obtain
\begin{align*}
&\liminf_{n\to\infty}\wt \E_t(f )
\geq  (1-\eps) \frac{1-\eta}{1+\eta}\wh \E(f ).
\end{align*}
Since $\eta>0$ can be arbitrarily small, the lemma follows.
\end{proof}

\begin{lemma}\label{rev20.1}
Recall that we have assumed that $\wt \E_{t_1}(f)>0$.
For every $t\geq t_1$, $\wt \E_t(f)>0$.
\end{lemma}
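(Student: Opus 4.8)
The plan is to exploit that $f$ is measurable with respect to $\calF^B_{t_1}:=\sigma(B_s,0\le s\le t_1)$ and to peel off the conditioning beyond time $t_1$ by the Markov property. Since $\Lambda^c$ is absorbing, one has the set identity $\{\tau_\Lambda>t\}=\{\tau_\Lambda>t_1\}\cap\{\tau_{\Lambda,t_1}>t\}$, and the Markov property of $B$ at time $t_1$ gives $\P(\tau_{\Lambda,t_1}>t\mid\calF^B_{t_1})=g(B_{t_1})$ on $\{\tau_\Lambda>t_1\}$, where $g(x):=\P(\tau_\Lambda>t-t_1\mid B_0=x)$. Hence, from \eqref{m29.3} and the $\calF^B_{t_1}$-measurability of $f$,
\begin{align*}
\wt\E_t(f)=\frac{\E\big(f\,\bone_{\{\tau_\Lambda>t\}}\big)}{\P(\tau_\Lambda>t)}=\frac{\E\big(f\,\bone_{\{\tau_\Lambda>t_1\}}\,g(B_{t_1})\big)}{\E\big(\bone_{\{\tau_\Lambda>t_1\}}\,g(B_{t_1})\big)}.
\end{align*}

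First I would record that $g>0$ on $\Lambda$: for $x\in\Lambda$ there is $r>0$ with $\calB(x,r)\subset\Lambda$, and with positive probability Brownian motion started at $x$ remains in $\calB(x,r)$ up to time $t-t_1$, so $g(x)>0$ (equivalently, the killed heat kernel $p_{t-t_1}(x,\cdot)$ is strictly positive on the connected open set $\Lambda$). Since $\Lambda^c$ is absorbing, $B_{t_1}\in\Lambda$ on $\{\tau_\Lambda>t_1\}$, hence $g(B_{t_1})>0$ there. In particular $\P(\tau_\Lambda>t)=\E(\bone_{\{\tau_\Lambda>t_1\}}g(B_{t_1}))>0$, using $\P(\tau_\Lambda>t_1)>0$ (which is already implicit in the hypothesis that $\wt\E_{t_1}(f)$ is defined), so the denominator above is positive and $\wt\E_t(f)$ is well defined.

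It then remains to see that the numerator is positive. The hypothesis $\wt\E_{t_1}(f)>0$ says exactly that $\E(f\,\bone_{\{\tau_\Lambda>t_1\}})>0$; since $f\ge 0$, the nonnegative random variable $f\,\bone_{\{\tau_\Lambda>t_1\}}$ is strictly positive on an event $D$ with $\P(D)>0$, and on $D$ we have $g(B_{t_1})>0$ by the previous paragraph. Therefore $f\,\bone_{\{\tau_\Lambda>t_1\}}\,g(B_{t_1})\ge 0$ with strict inequality on $D$, so its expectation is strictly positive, giving $\wt\E_t(f)>0$. The only points requiring a little care are the set identity $\{\tau_\Lambda>t\}=\{\tau_\Lambda>t_1\}\cap\{\tau_{\Lambda,t_1}>t\}$ and the positivity $g>0$ on $\Lambda$, both elementary consequences of $\Lambda$ being a bounded connected open set with absorbing complement; I do not anticipate any genuine obstacle.
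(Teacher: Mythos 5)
Your argument is correct, and it takes a genuinely different and more elementary route than the paper. The paper's proof compares the laws of $B_{t_1}$ under $\wt\P_t$ and $\wt\P_{t_1}$ via Lemma~\ref{a11.4} (the IU-based density bounds for $\wt p_t$) together with the $h$-process observation of Remark~\ref{rev30.10} that the Brownian bridge from $x$ to $y$ on $[0,t_1]$ has the same law under either conditioned measure; it then transfers the strict positivity from $\wt\E_{t_1}(f\bone_{\{B_{t_1}\in\Lambda_*\}})$ to $\wt\E_t$ by a density-ratio lower bound on a suitable $\Lambda_*$. You instead unwind the definition of $\wt\E_t(f)$ directly with the Markov property at time $t_1$, using that $f$ is $\calF^B_{t_1}$-measurable, to get $\wt\E_t(f)=\E\bigl(f\,\bone_{\{\tau_\Lambda>t_1\}}\,g(B_{t_1})\bigr)/\P(\tau_\Lambda>t)$ with $g(x)=\P(\tau_\Lambda>t-t_1\mid B_0=x)>0$ on $\Lambda$, and conclude by noting that the integrand is strictly positive on the positive-probability set where $f\bone_{\{\tau_\Lambda>t_1\}}>0$. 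The two ingredients you flag --- the identity $\{\tau_\Lambda>t\}=\{\tau_\Lambda>t_1\}\cap\{\tau_{\Lambda,t_1}>t\}$ and the strict positivity of the killed heat kernel on the connected open set $\Lambda$ --- are both standard and correctly invoked. What your route buys: it avoids both the intrinsic ultracontractivity estimate and the $h$-process bridge decomposition, and it works for every $t\geq t_1$ without any implicit appeal to $t$ being large (Lemma~\ref{a11.4} is only stated for $t\geq s_1$, though of course bare positivity of $\wt p_t$ holds for all $t>0$). The paper's route, while heavier here, reuses machinery (Lemma~\ref{a11.4}, Remark~\ref{rev30.10}) that is needed elsewhere anyway.
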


\begin{proof}

It follows from Lemma \ref{a11.4} and \eqref{rev20.2} that the distributions of $B_{t_1}$ under $\wt \P_t$ and $\wt \P_{t_1} $ have strictly positive densities in $\Lambda$, say $p_*(x)$ and $p_{**}(x)$. Let $\Lambda_*\subset\Lambda$ and $c_1,c_2>0$ be such that  $p_*(x)/p_{**}(x)>c_1$ and 
$\wt \E_{t_1}(f \bone_{\{B_{t_1}\in \Lambda_*\}}  ) > c_2$.

According to Remark \ref{rev30.10}, under both $\wt \P_t$ and $\wt \P_{t_1} $,
the process $\{B_s,0\leq s \leq t_1\}$ conditioned by $\{B_0=x, B_{t_1} = y\}$ has the same distribution. Hence
\begin{align*}
\wt \E_t(f) \geq \wt \E_{t}(f \bone_{\{B_{t_1}\in \Lambda_*\}}  )
\geq c_1 \wt \E_{t_1}(f \bone_{\{B_{t_1}\in \Lambda_*\}}  )
\geq c_1 c_2 >0.
\end{align*}
\end{proof}

\subsection{Villemonais' estimate}

Recall that we have assumed that $u$ is fixed and $t_2\leq u+1$. This easily implies that  for some $c_1>0$ and all probability measures $\mu_n$ supported in $\Lambda_1$,
\begin{align}\label{a11.2}
 \P_{\mu_n} (\tau_\Lambda > t_2) \geq c_1,
\end{align}
where $\P_{\mu_n}$ represents the distribution of the driving process $B$ with the initial distribution $\mu_n$. 

Recall definitions of $A_1,A_2$ and $\calH^n_t$ stated in \eqref{m29.10}-\eqref{m31.1} and \eqref{m29.4}.

\begin{lemma}\label{a3.1}
$\lim_{n\to\infty} \P(A_1\cap A_2) = 1$. 
\end{lemma}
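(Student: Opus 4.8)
The plan is to control $\P(A_1^c\cup A_2^c)$ by a first‑moment (Markov) bound applied cube by cube, followed by a union bound over $\bQ$, and then to verify that the resulting net power of $n$ is negative using \eqref{a3.3}.

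First I would fix $Q\in\bQ$ and apply Theorem \ref{a3.2} with $t=t_2$ to the two test functions $\bone_{G_Q}$ and $f_Q=f\bone_{G_Q}$, both of which are measurable on $C([0,t_2],\Lambda)$ with sup norm at most $1$ (recall $\|f\|_\infty\le 1$). Since $t_2\le u+1$, the bound \eqref{a11.2} gives $\P_{\mu_n}(\tau_\Lambda>t_2)\ge c_1$, hence $\bigl(\E(\P_{\mu_n}(\tau_\Lambda>t_2)^{-2})\bigr)^{1/2}\le c_1^{-1}$, so Theorem \ref{a3.2} yields
\begin{align*}
\E\left|\cH_{t_2}^n(\bone_{G_Q})-\wt\P_{t_2}(G_Q)\right|\le \frac{2(1+\sqrt2)}{c_1}\,n^{-1/2},\qquad
\E\left|\cH_{t_2}^n(f_Q)-\wt\E_{t_2}(f_Q)\right|\le \frac{2(1+\sqrt2)}{c_1}\,n^{-1/2}.
\end{align*}
Markov's inequality then bounds the probability that the constraint defining $A_1$ (resp.\ $A_2$) fails for this particular $Q$ by $2(1+\sqrt2)c_1^{-1}n^{-1/2}/n^{-\alpha d+\gamma(-\alpha+3\delta/4)-\delta}$. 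Summing over all $Q\in\bQ$ and over the two families, and using that $|\bQ|\le n^{\alpha d}|\Lambda|$ (as in the proof of Lemma \ref{a2.6}(ii)), gives
\begin{align*}
\P(A_1^c\cup A_2^c)\le \frac{4(1+\sqrt2)}{c_1}\,|\Lambda|\,n^{\alpha d}\cdot\frac{n^{-1/2}}{n^{-\alpha d+\gamma(-\alpha+3\delta/4)-\delta}}
= C\,n^{\,2\alpha d+\gamma\alpha-3\gamma\delta/4+\delta-1/2}.
\end{align*}
Finally I would check the exponent: \eqref{a3.3} gives $\alpha(\gamma+2d)\le 1/2-2\delta+3\gamma\delta/4$, i.e.\ $2\alpha d+\gamma\alpha+2\delta-3\gamma\delta/4\le 1/2$, and subtracting $\delta>0$ leaves $2\alpha d+\gamma\alpha-3\gamma\delta/4+\delta\le 1/2-\delta<1/2$; hence the exponent is strictly negative, $\P(A_1^c\cup A_2^c)\to0$, and $\P(A_1\cap A_2)\to1$.

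The only delicate point is this exponent bookkeeping: the error tolerance $n^{-\alpha d+\gamma(-\alpha+3\delta/4)-\delta}$ hard‑wired into the definitions of $A_1$ and $A_2$ must beat the $n^{-1/2}$ concentration rate of Theorem \ref{a3.2} by enough to absorb the $n^{\alpha d}$ factor produced by counting the cubes in $\bQ$ and still leave a negative power of $n$ — and \eqref{a3.3}, with its slack of a full $\delta$ beyond what the computation actually needs, is exactly what makes this work. Everything else is routine.
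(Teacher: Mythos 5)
Your proof is correct and follows essentially the same route as the paper's: apply Theorem \ref{a3.2} with the uniform lower bound \eqref{a11.2} on $\P_{\mu_n}(\tau_\Lambda>t_2)$, Markov's inequality for each cube, a union bound over $\bQ$, and then check that the condition \eqref{a3.3} on $\alpha$ makes the net exponent strictly negative. The only (cosmetic) difference is that you treat $A_1$ and $A_2$ simultaneously, while the paper writes out the $A_1$ case and notes that $A_2$ is analogous; your exponent arithmetic is correct and matches the paper's conclusion that the bound decays like $n^{-\delta}$.
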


\begin{proof}
By Theorem \ref{a3.2} and \eqref{a11.2},
\begin{align}\label{a3.4}
&\P\left(\left|\cH_{t_2}^n\left(\bone_{G_Q}\right) -
	\wt\P_{t_2}(G_Q )\right|
\geq n^{-\alpha d +\gamma(-\alpha+3\delta/4) -\delta}\right)\\
&\qquad\leq 
\E \left(\left|\cH_{t_2}^n\left(\bone_{G_Q}\right) -
	\wt\P_{t_2}(G_Q )\right|\right)
n^{\alpha d -\gamma(-\alpha+3\delta/4) +\delta} \notag \\
&\qquad\leq 2\left(1 + \sqrt{2}\right) c_1^{-1}
n^{-1/2} n^{\alpha d -\gamma(-\alpha+3\delta/4) +\delta}.\notag
\end{align}

Recall from \eqref{a3.3} that we have assumed that $\alpha \leq (1/2 -2\delta+3\gamma \delta/4 )/(\gamma+2d)$. Hence,
 $-1/2+\alpha d-\gamma(-\alpha+3\delta/4) +\delta\leq -\alpha d -\delta$.
This and \eqref{a3.4} show that for some $c_2$,
\begin{align}\label{a3.5}
&\P\left(\left|\cH_{t_2}^n\left(\bone_{G_Q}\right) -
	\wt\P_{t_2}(G_Q )\right|
\geq n^{-\alpha d +\gamma(-\alpha+3\delta/4) -\delta}\right)
\leq c_2 n^{-\alpha d -\delta} .
\end{align}
Since $\Lambda$ is bounded set, the number of $Q$ in $\bQ$ is bounded by $c_3 n^{\alpha d}$. 
It follows from the definition \eqref{m29.10} of $A_1$
and \eqref{a3.5} that
\begin{align}
\P(A_1^c) &\leq
\sum_{Q\in \bQ}
\P\left(\left|\cH_{t_2}^n\left(\bone_{G_Q}\right) -
	\wt\P_{t_2}(G_Q )\right|
\geq n^{-\alpha d +\gamma(-\alpha+3\delta/4) -\delta}\right)
\leq c_3 n^{\alpha d}c_2 n^{-\alpha d -\delta}\notag\\
&= c_3 c_2 n^{-\delta}.\label{a9.1}
\end{align}
Hence, $\lim_{n\to\infty} \P(A_1) = 1$.
The proof for $A_2$  is completely analogous.
\end{proof}

\subsection{Paths close to the boundary and branching}
\label{m31.6}

Our next lemma is comprised of two claims, corresponding to events $C^1_j$ and $C^2_j$ defined below. The first step of the proof is the only common aspect of the two claims.

We believe that some ``unusual'' events are very unlikely to occur at an arbitrarily chosen fixed time $u$. This we cannot prove. But we will prove that for a fixed $u$, there is a  time with this property in $[u,u+1]$. 

Consider any $u>0$, let $\Delta t = n^{-2\alpha + \delta}$, $k_1 = \lfloor 1/\Delta t\rfloor$ and $s_j= u+ j \Delta t$ for $ j=0,\dots, k_1$.
Recall that $J^n$ is the spine of $\X^n$.
Let
\begin{align}\label{rev30.4}
C^1_j &= \{\dist (J^n_{s_j} ,\Lambda^c) \leq 4n^{-\alpha +3\delta /4}\},\\
C^2_j &= \{J^n \text{ passes through a  branch point in  } [s_j, s_{j+1}]\}, \label{rev30.5}\\
C_j &= C^1_j \cup C^2_j .\notag
\end{align}

\begin{lemma}\label{a3.6}
For every $\eps>0$ and $u>0$ there exists $n_1$ so large that for $n\geq n_1$
there exists $j \in\{0,\dots, k_1\}$ such that $\P(C_j)\leq \eps$.
\end{lemma}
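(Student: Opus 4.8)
The plan is a pigeonhole argument over the $k_1+1$ grid times $s_0,\dots,s_{k_1}$, whose number $k_1=\lfloor n^{2\alpha-\delta}\rfloor$ tends to infinity by \eqref{a7.8}. Since $\min_{0\le j\le k_1}\P(C_j)\le(k_1+1)^{-1}\sum_{j}\P(C_j)\le(k_1+1)^{-1}\bigl(\sum_j\P(C^1_j)+\sum_j\P(C^2_j)\bigr)$, it suffices to show that each of $\sum_{j=0}^{k_1}\P(C^1_j)$ and $\sum_{j=0}^{k_1}\P(C^2_j)$ is $o(k_1)$; then for $n$ large the average drops below $\eps$, and one reads off $n_1=n_1(\eps,u)$. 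The common first step I would carry out is an oscillation bound: writing $r_n=n^{-\alpha+3\delta/4}$, introduce the event $E_n$ that every particle $X^i$, $1\le i\le n$, oscillates by less than $r_n$ on each $[s_j,s_{j+1}]$; exactly as in \eqref{a2.1}, the complementary event for one particle on one such interval has probability $\le\exp(-c\,r_n^2/\Delta t)=\exp(-c\,n^{\delta/2})$, so a union bound over $\le(k_1+1)n$ events gives $(k_1+1)\P(E_n^c)\to0$.

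For the events $C^1_j$ I would write $\sum_j\P(C^1_j)=\E\,\#\{j:\dist(J^n_{s_j},\Lambda^c)\le 4r_n\}$ and note that on $E_n\cap(C^2_j)^c$ the spine is a single particle's Brownian piece on $[s_j,s_{j+1}]$ with oscillation at most $r_n$, so $\dist(J^n_{s_j},\Lambda^c)\le 4r_n$ forces $\dist(J^n_s,\Lambda^c)\le 5r_n$ for all $s\in[s_j,s_{j+1}]$. Hence on $E_n$ one has $\#\{j:\dist(J^n_{s_j},\Lambda^c)\le4r_n\}\le\sum_j\bone_{C^2_j}+(\Delta t)^{-1}\Leb\{s\in[u,u+2]:\dist(J^n_s,\Lambda^c)\le5r_n\}$, and therefore $\sum_j\P(C^1_j)\le\sum_j\P(C^2_j)+(\Delta t)^{-1}\E\,\Leb\{s\in[u,u+2]:\dist(J^n_s,\Lambda^c)\le5r_n\}+(k_1+1)\P(E_n^c)$. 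The occupation-time term is where the ``no trajectory lingers near the boundary'' idea enters: between two consecutive branchings the spine is a Brownian path conditioned to avoid $\Lambda^c$ for the duration of that piece, and — using the transition-density bounds of Lemma \ref{a11.4}, which is exactly where the Lipschitz-constant-less-than-one hypothesis is used — the expected time such a piece spends in the $\rho$-neighbourhood of $\Lambda^c$ is at most a constant times $\rho^{\gamma}$ times its duration; summing the duration-proportional contributions over the (randomly many) pieces in $[u,u+2]$ yields $\sup_n\E\,\Leb\{s\in[u,u+2]:\dist(J^n_s,\Lambda^c)\le\rho\}\le c_*\rho^{\gamma}$. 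With $\rho=5r_n\to0$ and $\alpha>3\delta/4$ (from \eqref{a7.8}) this makes the term $O(n^{2\alpha-\delta-\gamma(\alpha-3\delta/4)})=o(k_1)$, so $\sum_j\P(C^1_j)\le\sum_j\P(C^2_j)+o(k_1)$.

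For the events $C^2_j$, since a spine branch point falls in at most one of the intervals $[s_j,s_{j+1}]$ (a.s.\ avoiding the endpoints, by (A2)), $\sum_j\bone_{C^2_j}\le S_n$ with $S_n$ the number of spine branch points in $[u,u+1+\Delta t]$, so $\sum_j\P(C^2_j)\le\E[S_n]$ and it suffices to prove $\E[S_n]=O(1)$. From the label dynamics, the branching at $\tau_k$ lies on the spine exactly when the particle jumped onto is the spine particle, i.e.\ when $\chi(\tau_k-)=U^{i_k}_k$, equivalently when the $k$-th jump lands precisely on the current spine location; thus $\E[S_n]=\E\sum_{k:\,\tau_k\in[u,u+1+\Delta t]}\P(\chi(\tau_k-)=U^{i_k}_k\mid\mathcal H_k)$, where $\mathcal H_k$ is all the randomness other than $U^{i_k}_k$ and the post-$\tau_k$ evolution. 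If each such conditional probability is $\le C_0/n$, then combining with the (known) estimate that the expected number of boundary hits in unit time is $O(n)$ (cf.\ \cite{BHM00}) one gets $\E[S_n]=O(1)$, hence $\sum_j\P(C^2_j)=o(k_1)$; together with the previous paragraph and the oscillation bound this gives $(k_1+1)^{-1}\sum_j\P(C_j)\to0$, which proves the lemma.

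The main obstacle is precisely the bound $\P(\chi(\tau_k-)=U^{i_k}_k\mid\mathcal H_k)\le C_0/n$: the spine particle $\chi(\tau_k-)$ is not adapted, being singled out by the entire future, which itself depends on the recipient choice $U^{i_k}_k$. The way I would attack it is to condition on $\calF_{\tau_k}$ together with the post-jump configuration (which fixes $i_k$, $U^{i_k}_k$ and the site $z_k^\ast:=X^{U^{i_k}_k}_{\tau_k-}$ occupied, just after $\tau_k$, by exactly the two particles $i_k,U^{i_k}_k$), observe that on this event $\{\tau_k\text{ on spine}\}=\{J^n_{\tau_k}=z_k^\ast\}$ (using continuity of $J^n$ and the a.s.\ distinctness of particle positions at $\tau_k-$), and then bound the conditional probability that the immortal spine trajectory sits at the single site $z_k^\ast$ at time $\tau_k$. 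This should be $O(1/n)$ because, given the configuration, the spine location is spread over the $n$ particles with no site carrying more than an $O(1/n)$ share of its mass — a fact I would extract either from the approximation of the empirical distribution of the DHPs by conditioned Brownian motion (Theorem \ref{a3.2}) together with a bounded-density comparison for the spine law, or from an exchangeability argument for the two particles sharing the site $z_k^\ast$ combined with a crude spreading bound. Quantifying how uniformly the spine is distributed over the particles is the technical heart of the argument; everything else is union bounds, Gaussian small-deviation estimates, and the boundary occupation-time bound resting on Lemma \ref{a11.4}.
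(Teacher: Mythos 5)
Your pigeonhole reduction in the first paragraph matches the paper's Step 1 (the paper argues by contradiction: if no $j$ works, then with positive probability a positive fraction of the $C_j$'s occur, and it then shows separately that neither $\sum_j\bone_{C^1_j}$ nor $\sum_j\bone_{C^2_j}$ can be a positive fraction of $k_1$). The oscillation events $E_n$ and the observation that on $E_n\cap(C^2_j)^c$ the spine stays within $5r_n$ of $\Lambda^c$ on the whole interval $[s_j,s_{j+1}]$ are also fine and in the spirit of the paper's use of \eqref{a2.1}. The problems are in the two sub-estimates, and both arise from the same source: you try to use information about the \emph{distribution} of the spine $J^n$ for finite $n$, which is precisely what this lemma (and ultimately the theorem) is trying to establish, so the argument becomes circular.

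For $\sum_j\P(C^1_j)$ you assert that ``between two consecutive branchings the spine is a Brownian path conditioned to avoid $\Lambda^c$ for the duration of that piece,'' and use Lemma \ref{a11.4} to bound the occupation time near the boundary. That description of the spine's law between branch points is not correct. Between $\tau_{b_m}$ and $\tau_{b_{m+1}}$ the spine does follow a single Brownian trajectory that does not hit $\Lambda^c$, but that trajectory is singled out by the requirement that it belongs to the unique infinite lineage — a condition that looks arbitrarily far into the future and has no reason, \emph{a priori}, to leave the piece with the law of a conditioned Brownian motion. Indeed, the whole point of Theorem \ref{a1.7} is that this is approximately true only in the limit $n\to\infty$; for fixed $n$ it can fail (see Remark (v) and the references there). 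So the occupation-time bound $\sup_n\E\,\Leb\{s:\dist(J^n_s,\Lambda^c)\le\rho\}\le c_*\rho^\gamma$ is not justified by Lemma \ref{a11.4}. The paper circumvents this by a different mechanism: it couples the (unknown) spine to a Brownian motion $B^*$ built by following \emph{uniformly random} branch choices through the tree, observes that if the tree has at most $n^\xi$ branch points (which holds with high probability by the moment bound \eqref{a7.6} from \cite{BHM00}), then $B^*$ reproduces any fixed DHP with probability at least $n^{-1}2^{-n^\xi}$, and compares this with the Gaussian estimate \eqref{a7.3} showing that $B^*$ is extremely unlikely to hover near $\partial\Lambda$ many times and survive. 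That argument never touches the distribution of the spine; it only needs the spine to be \emph{some} trajectory in the tree.

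For $\sum_j\P(C^2_j)$ you reduce to bounding $\E[S_n]$ and then to the estimate $\P(\chi(\tau_k-)=U^{i_k}_k\mid\mathcal H_k)\le C_0/n$, which you rightly flag as the main obstacle. Your proposed routes to it do not close the gap: Theorem \ref{a3.2} (Villemonais) controls the \emph{empirical} distribution of the DHPs, not the law of the particle actually picked by the future to be the spine — this is exactly the distinction emphasized in Section \ref{j30.1}. Exchangeability of the two particles sharing the site $z_k^*$ only gives that the spine is equally likely to go through either of those two; it says nothing about the other $n-2$ particles, so you cannot conclude an $O(1/n)$ bound from it alone. The paper sidesteps all this: it does not estimate per-branch spine probabilities at all, but instead bounds, uniformly in $k$ and with high probability, the \emph{total} number of branch points $M_k$ on each tree (using $r$-th moment bounds conditioned on $F$, \eqref{a7.6}), notes the spine's branch count $M_J$ is bounded by $M_k$ for the root $k$ of the spine's lineage, and obtains $M_J<n^{2\alpha-2\delta}=o(k_1)$ with probability tending to $1$. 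This is both simpler and avoids the spine-law circularity entirely. Note also you do not need $\E[S_n]=O(1)$; $o(k_1)=o(n^{2\alpha-\delta})$ suffices, which is the weaker bound the paper actually proves.

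In short: the outer averaging/pigeonhole framing is sound and parallel to the paper's, but both of your key inner estimates presuppose quantitative control of the spine's law for fixed $n$, which is not available and is essentially what is being proved. The paper's proof replaces those steps by (a) a coupling with a uniformly random walk through the tree, and (b) deterministic-in-$k$ moment bounds on tree branch counts from \cite{BHM00}, neither of which requires knowing anything about the spine's distribution.
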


\begin{proof}
\emph{Step 1}.
Fix $\eps\in(0,1)$ and $u>0$.
Suppose that for  a given $n$, there is no $j \in\{0,\dots, k_1\}$ such that $\P(C_j)\leq \eps$. Then
\begin{align*}
\E\left( \sum_{1\leq j \leq k_1 } \bone_{C_j}\right) \geq \eps k_1.
\end{align*}
Let 
\begin{align*}
p=\P\left( \sum_{1\leq j \leq k_1} \bone_{C_j} \leq \eps k_1/2\right) .
\end{align*}
Then
\begin{align*}
\E\left( \sum_{1\leq j \leq k_1} \bone_{C_j}\right) \leq p\eps k_1/2 + (1-p)k_1.
\end{align*}
Hence $\eps k_1 \leq p\eps k_1/2 + (1-p)k_1$ and, therefore,
\begin{align*}
&p \leq \frac {1-\eps}{1-\eps/2} <1.
\end{align*}
It follows that
\begin{align*}
\P\left( \sum_{1\leq j \leq k_1} \bone_{C_j} > \eps k_1/2\right) = 1-p
\geq  \frac {\eps/2}{1-\eps/2}>0.
\end{align*}
This implies that at least one of the following inequalities holds,
\begin{align}\label{a3.7}
&\P\left( \sum_{1\leq j \leq k_1} \bone_{C_j^1} > \eps k_1/4\right) 
\geq  \frac {\eps/4}{1-\eps/2}>0,\\
\label{a3.8}
&\P\left( \sum_{1\leq j \leq k_1} \bone_{C_j^2} > \eps k_1/4\right) 
\geq  \frac {\eps/4}{1-\eps/2}>0.
\end{align}
It will suffice to show that each of these inequalities  fails for large $n$.

\medskip
\emph{Step 2}.
Recall that $\calB(v,r)$ denotes a ball with center $v$ and radius $r$.
Let $B$ be  Brownian motion and
\begin{align*}
\wt C^1_j &= \{\dist (B_{s_j} ,\Lambda^c) \leq 4n^{-\alpha +3\delta /4}\}.
\end{align*}

Since $\Lambda$ is a bounded Lipschitz domain, it is easy to see that for some $c_1, c_2>0$ and every $x\in\Lambda$ there exists $z\in \Lambda^c$ such that $|z-x| \leq c_1\dist(x,\Lambda^c)$ and   $\calB( z, c_2 \dist(x,\Lambda^c))\subset \Lambda^c$. 
From here up to and including \eqref{a7.2}, $\P$ will denote the distribution of Brownian motion starting from $x$.
By Brownian scaling, there exists $p_1>0$ such that for all $x$,
\begin{align}\label{a3.9}
&\P\left(\inf\{t>0: B_t \in \Lambda^c\} \leq
\dist(x,\Lambda^c)^2\right)\\
&\qquad\geq \P\left(\inf\{t>0: B_t \in \calB( z, c_2 \dist(x,\Lambda^c))\} \leq
\dist(x,\Lambda^c)^2\right) = p_1.\notag
\end{align}

Let 
\begin{align*}
j_0&= \inf\left\{m\geq 0: \dist( B_{s_m},\Lambda^c) \leq  
4n^{-\alpha +3\delta /4}\right\},\\
j_{i+1}&= \inf\left\{m>j_i: \dist( B_{s_m},\Lambda^c) \leq  
4n^{-\alpha +3\delta /4}, s_m\geq s_{j_i}
+ 4 n^{-2\alpha +3\delta/2}\right\}, \quad i\geq 0.
\end{align*}
By \eqref{a3.9} and the strong Markov property applied at $s_{j_i}$, for every $i$, 
\begin{align*}
&\P\left(\inf\{t>s_{j_i}: B_t \in \Lambda^c\} \leq
s_{j_{i+1}}\right) \geq
 p_1.
\end{align*}
We apply the strong Markov property again to see that for $k\geq 0$,
\begin{align}\label{a3.10}
&\P\left(
\inf\{t>s_0: B_t \in \Lambda^c\} > s_{j_{k+1}}
\right)\\
&\qquad\leq\P\left(
\bigcap_{i=0}^k
\left\{ 
\inf\{t>s_{j_i}: B_t \in \Lambda^c\} > s_{j_{i+1}}
\right\}
\right) \leq
(1- p_1)^{k+1}.\notag
\end{align}

If the event 
$\left\{ \sum_{1\leq j \leq k_1} \bone_{\wt C_j^1} > \eps k_1/4\right\}$
occurred then 
$s_{j_\ell} \leq u+1$, where 
\begin{align*}
\ell = \frac{(\eps k_1/4) \Delta t}{ 4 n^{-2\alpha +3\delta/2}}
= \frac{(\eps  \lceil n^{2\alpha - \delta} \rceil/4) 
n^{-2\alpha + \delta}
}{ 4 n^{-2\alpha +3\delta/2}}
\geq \frac 1 {16} \eps n^{2\alpha -3\delta/2}.
\end{align*}
This and \eqref{a3.10} imply that
\begin{align}\label{a7.2}
&\P\left( \sum_{1\leq j \leq k_1} \bone_{\wt C_j^1} > \eps k_1/6,\ 
\inf\{t>s_0: B_t \in \Lambda^c\} > u+1
\right)\\
&\qquad \leq \P\left(s_{j_\ell} \leq u+1,\ 
\inf\{t>s_0: B_t \in \Lambda^c\} > u+1
\right)\notag\\
&\qquad \leq \P\left(
\inf\{t>s_0: B_t \in \Lambda^c\} > s_{j_\ell}
\right)
\leq (1- p_1)^\ell 
 \leq (1-p_1)^{\eps n^{2\alpha -3\delta/2}/16}.\notag
\end{align}

We will apply methods and results from the proof of Theorem 1.3 in
\cite[p. 688]{BHM00} but   we will use  different notation. 

Let $\Lambda_2$ be such that $\dist(\Lambda_1, \Lambda_2^c) >0$ and $\dist(\Lambda_2, \Lambda^c) >0$.
Let $\H=\{k:  n/4 \leq k\leq n\}$ and $\H^c=\{1,\dots,n\}\setminus \H$. 

The set $\H^c$ is needed in the argument so that we can assume that many processes $X^k$ with $k\in\H^c$ do not jump on the interval $[0,u+2]$; this is represented formally in \eqref{a7.4} below. The processes that do not jump behave like Brownian motion conditioned to stay inside $\Lambda$ and, therefore, they stay ``far'' from the boundary most of the time. This creates an opportunity for  processes $X^k$ with $k\in\H$ to jump ``far'' from the boundary and hence gives them a chance to stay inside $\Lambda$ for the rest of the time interval  $[0,u+2]$. The processes in $\H^c$ are not forgotten---they will be accounted for when we define $\wh \H$ below.

Consider  $c_3 >0$ 
and let $F=F(c_3)$ be the event that at least $c_3 n$ processes $X^k$ with $k\in \H^c$ stay within $\Lambda_2$ on the interval  
$[0,u+2]$. 
Recall that  $X^k_0 \in \Lambda_1$ for all $k$, a.s. 
By the law of large numbers, there exists $c_3=c_3(u) >0$ such that 
\begin{align}\label{a7.4}
\lim_{n\to\infty}\P(F)=1.
\end{align}

Informally, let $M_k$ be the number of branching points on the tree of descendants of $X^k$ on the interval  
$[0,u+2]$. 
Formally, let $M_k$ be the number of branching points on DHPs $H^\ell_{u+2}$ such that $H^\ell_{u+2}(0) = X^k_0$. Every branching point appears on two different DHPs but we count it only once. It follows from \cite[(2.11)]{BHM00} that for every $r>0$ there exists $c^*_r<\infty$ such that for every $k\in\H$ and sufficiently large $n$,
\begin{align}\label{a7.6}
\E (M_k^r\mid F) \leq c^*_r.
\end{align}
The proof of \eqref{a7.6} is based on the branching structure and induction.
Note that \cite[(2.11)]{BHM00} is an upper bound for the $r$-th power of 
``the total number of jumps on the tree of descendants
of particle $m$'' defined just below \cite[(2.10)]{BHM00}. But the definition given in \cite{BHM00} makes it clear that ``jumps'' include branching points on the tree of descendants of $X^k$ defined earlier in this paragraph. Formula \cite[(2.11)]{BHM00} does not contain conditioning on $F$. The conditioning on $F$ is implicit on pages 688--691, as indicated on page 688 of \cite{BHM00}.

Recall $\xi$ introduced in \eqref{a7.1}.
Let $r$ be so large that $\xi r >1$. We have
\begin{align*}
&\P(M_k\geq  n^\xi\mid F) = \P(M_k^r \geq n^{\xi r}\mid F) 
\leq \E(M_k^r \mid F) n^{-\xi r}
\leq c^*_r n^{-\xi r}.
\end{align*}
If we let 
\begin{align*}
K = \bigcup_{ k \in\H} \{M_k\geq  n^\xi\}
\end{align*}
then
\begin{align}\label{a7.5}
&\P\left( K \mid F\right)
\leq c^*_r n^{1-\xi r}.
\end{align}

We construct a
Brownian motion $B^*$  killed on the boundary of $\Lambda$ as follows. Choose $k$ uniformly from $\H$. Follow $X^k$ from time 0 until the first time when $X^k$ exits $\Lambda$ or until the first branching point, whichever comes first. If the process exits $\Lambda$, stop. At a branching point, start following one of the two branches, with equal probabilities, with the random choice independent of $\X^n$. Then apply the inductive construction---follow the branch until it exits $\Lambda$ or the next branching point, whichever comes first. If the process exits $\Lambda$, stop. At a branching point, start following one of the branches, with equal probabilities, with the random choice independent of $\X^n$.

Let $D^*$ be the event on the first line of \eqref{a7.2} but with random objects defined relative to $B^*$ in place of $B$ so that 
\begin{align}\label{a7.3}
\P(D^*) 
 \leq (1-p_1)^{\eps n^{2\alpha -3\delta/2}/16}.
\end{align}

Let $D$ be the event that there is a DHP $H^k_{u+2}$ such that $H^k_{u+2}(0) = X^j_0$ for some $j\in\H$ and
the event on the first line of \eqref{a7.2} holds for this DHP in place of $B$.
If $K^c\cap D$ holds then, in the above notation,  $j$ will be chosen with probability $1/n$, and there will be at most $n^\xi$ choices in the construction of $B^*$ on the interval $[0,u+2]$ so $B^*$ will follow the trajectory of $H^k_{u+2}$ all the way up to $u+2$ with probability greater than or equal to $2^{- n^\xi}$. Thus
\begin{align*}
\P(D^*) \geq \P(D^*\mid K^c \cap D ) \P(K^c \cap D )\geq (1/n) 2^{- n^\xi} \P(K^c \cap D ).
\end{align*}
Hence, by \eqref{a7.3},
\begin{align*}
\P(K^c \cap D ) &
\leq n 2^{ n^\xi}\P(D^*)
\leq n 2^{ n^\xi}
(1-p_1)^{\eps n^{2\alpha -3\delta/2}/16}\\
&= \exp\left(
\log n + n^\xi\log 2 + n^{2\alpha -3\delta/2} \log (1-p_1)\eps/16\right).
\end{align*}
This and our assumption  \eqref{a7.1} that $0<\xi< 2\alpha -3\delta/2$ imply that
\begin{align}\label{a7.7}
\lim_{n\to \infty}
\P(K^c \cap D ) &\leq 
\lim_{n\to \infty}
\exp\left(
\log n + n^\xi\log 2 + n^{2\alpha -3\delta/2} \log (1-p_1)\eps/16\right)=0.
\end{align}

Let  $\wh\H=\{k:  1 \leq k\leq 3n/4\}$ and define $\wh F, \wh K$ and $\wh D$ relative to $\wh \H$ in the same way as $F,K$ and $D$ were defined relative to $\H$. Then, by symmetry and \eqref{a7.4}, \eqref{a7.5} and \eqref{a7.7},
\begin{align}\label{n12.1}
\lim_{n\to\infty}\P(\wh F)=1,\quad
\P\left( \wh K \mid \wh F\right)
\leq c^*_r n^{1-\xi r},\quad
\lim_{n\to \infty}
\P(\wh K^c \cap \wh D )=0.
\end{align}

Note that
\begin{align*}
\left\{ \sum_{1\leq j \leq k_1} \bone_{C_j^1} > \eps k_1/4\right\}
\subset D \cup \wh D,
\end{align*}
so
\begin{align*}
\P&\left( \sum_{1\leq j \leq k_1} \bone_{C_j^1} > \eps k_1/4\right) \\
&\leq \P(K^c \cap D ) + \P(K\cap F)+\P(F^c)
+\P(\wh K^c \cap \wh D ) + \P(\wh K\cap \wh F)+\P(\wh F^c)\\
&\leq \P(K^c \cap D ) + \P(K\mid F)+\P(F^c)
+\P(\wh K^c \cap \wh D ) + \P(\wh K\mid \wh F)+\P(\wh F^c).
\end{align*}
Hence, by \eqref{a7.4}, \eqref{a7.5}, \eqref{a7.7} and \eqref{n12.1},
\begin{align*}
\lim _{n\to \infty} \P\left( \sum_{1\leq j \leq k_1} \bone_{C_j^1} > \eps k_1/4\right) =0,
\end{align*}
contradicting \eqref{a3.7}. This completes the proof that \eqref{a3.7} fails for all large $n$.

\medskip
\emph{Step 3}.
By \eqref{a7.6}, for $\delta >0$ and all $k\in\H$,
\begin{align*}
\P(M_k \geq n^{2\alpha-2\delta}\mid F) \leq \E (M_k^r\mid F) /n^{r(2\alpha-2\delta)}\leq c_r^* n^{-r(2\alpha-2\delta)},
\end{align*}
so
\begin{align*}
\P\left(\bigcup_{k\in\H}\{M_k \geq n^{2\alpha-2\delta}\}\mid F\right) \leq  c_r^* n^{1-r(2\alpha-2\delta)}.
\end{align*}
By \eqref{a7.8}, $2\alpha-2\delta>0$ so we can find $r$ so large that $1-r(2\alpha-2\delta) <0$. It follows that
\begin{align*}
\lim_{n\to\infty}
\P\left(\bigcup_{k\in\H}\{M_k \geq n^{2\alpha-2\delta}\}\mid F\right) =0.
\end{align*}
By \eqref{a7.4},
\begin{align*}
\lim_{n\to\infty}
\P\left(\bigcup_{k\in\H}\{M_k \geq n^{2\alpha-2\delta}\}\right) =0.
\end{align*}
One can prove in the same way that
\begin{align*}
\lim_{n\to\infty}
\P\left(\bigcup_{k\in\wh\H}\{M_k \geq n^{2\alpha-2\delta}\}\right) =0,
\end{align*}
so
\begin{align}\label{a7.9}
\lim_{n\to\infty}
\P\left(\bigcup_{1\leq k \leq n}\{M_k \geq n^{2\alpha-2\delta}\}\right) =0.
\end{align}

The piece of  the spine $\{J^n_t, 0\leq t \leq u+2\} $ is a trajectory within the tree of descendants of some $X^k$ on the interval $[0,u+2]$. Therefore, the number $M_J$ of branching points on $\{J^n_t, 0\leq t \leq u+2\} $ must be less than or equal to $M_k$ for some $k$.
If the event in \eqref{a3.8} holds then $M_J \geq  \eps n^{2\alpha-\delta}/12$. Hence,  \eqref{a7.9} implies that \eqref{a3.8} cannot hold for large $n$.
\end{proof}

\section{A generalization}\label{sec: general}
We will argue that our main result holds for some diffusions in Euclidean domains. However, in our examples, the generator of the diffusion must be associated in an appropriate way with the domain; in other words, we cannot prove that one can choose the generator and the domain independently.

Below $\|\cdot\|$ denotes the Euclidean norm.

\begin{proposition}\label{rev:prop}
 Suppose that $d\ge 1$ and $ U \subset \R^d$ is a domain (open bounded set) with a $C^1$-boundary. 

  Suppose that $\mQ_i:\R^d\to\R^d$, $ \mQ _i=(Q_{i1},Q_{i2},...,Q_{id})^T$, is the gradient of a  $C^2(\R^d,\R)$-function $x_i$ for $1\le i\le d$, and assume that
 $a_{j}:=\sum_{i=1}^d Q_{ij}^2=\|\mQ _i\|^2>0$ on $ U $ for all $1\le i\le d$, and furthermore, that the map $(x_1,...,x_d):\R^d\to\R^d$ is invertible and the matrix $\mathbf{Q}:=(Q_{ij})$ is non-singular at each point of $\R^d$.

Define $H:\R^d\to (0,\infty)$ by $$H(u_1,...,u_d)=\prod_{i=1}^d \|\mQ _i(u_1,...,u_d)\|,$$
and the operator $$\mathcal{L}=\frac{1}{2H}\nabla\cdot H\mathbf{C}\nabla$$
on $U$, where 
$\mathbf{C}:\R^d\to\R^{d\times d}$, and $\mathbf{C}$ is diagonal
with $c_{ii}:=1/a_{i}=\|\mQ _i\|^{-2}, 1\le i\le d$.

Let  $\lambda$ denote the principal eigenvalue of $\mathcal{L}$  on $ U $ with Dirichlet boundary conditions and let $\phi_{\mathcal{L},\lambda}>0$ denote the principal eigenfunction associated with 
$\mathcal{L}$ and $\lambda$ on $ U $.
Let  $\mathcal{L}^{\phi_{\mathcal{L},\lambda}}$
denote the generator of the $\calL$-diffusion conditioned to stay in $U$ forever, obtained by ``Doob's $h$-conditioning'' in space-time by the parabolic function $(u,t)\to \phi_\calL(u) e^{\lambda t}$.

Consider a sequence of Fleming-Viot processes
driven by $\calL$-diffusions and assume that their empirical initial distributions converge weakly to a measure supported on a compact subset of $U$. The spines of these processes converge weakly to a diffusion corresponding to $\mathcal{L}^{\phi_{\mathcal{L},\lambda}}$.
\end{proposition}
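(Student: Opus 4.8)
The plan is to reduce Proposition~\ref{rev:prop} to Theorem~\ref{a1.7} via the change of variables suggested in item (iii) of the remark following Theorem~\ref{a1.7}. Write $\Phi=(x_1,\dots,x_d)\colon\R^d\to\R^d$. By hypothesis $\Phi$ is a bijection whose Jacobian $\mathbf{Q}=(Q_{ij})=(\prt_j x_i)$ is non-singular at every point, and $\Phi\in C^2$; hence, by the inverse function theorem, $\Phi$ is a global $C^2$-diffeomorphism of $\R^d$. Set $\Lambda:=\Phi(U)$. Since $\Phi$ is a homeomorphism of $\R^d$ carrying $\bar U$ onto $\bar\Lambda$ and $\prt U$ onto $\prt\Lambda$, and $\Phi|_{\bar U}$ is $C^1$, the set $\Lambda$ is a bounded connected open set with $C^1$ boundary. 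A bounded $C^1$ domain is a Lipschitz domain with Lipschitz constant less than $1$ in the sense of Section~\ref{de2.1}: near any $p\in\prt\Lambda$, in coordinates whose last axis points along the inner normal at $p$, $\prt\Lambda$ is the graph of a $C^1$ function $g$ with $\nabla g(0)=0$, so $|\nabla g|<1$ on a sufficiently small neighbourhood of $p$, and $\prt\Lambda$ is covered by finitely many such neighbourhoods. Thus $\Lambda$ satisfies all the standing assumptions of Section~\ref{de2.1}.

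The crucial point is the change-of-variables identity: \emph{$\Phi$ intertwines $\mathcal{L}$ with $\tfrac12\Delta$}, i.e.\ $\mathcal{L}(g\circ\Phi)=\bigl(\tfrac12\Delta g\bigr)\circ\Phi$ on $U$ for $g\in C^2(\Lambda)$; equivalently, the $\mathcal{L}$-diffusion in $U$ killed at $\tau_U$, pushed forward by $\Phi$, is Brownian motion in $\Lambda$ killed at $\tau_\Lambda$. This is exactly what the definitions of $H$ and $\mathbf{C}$ are designed to produce: the orthogonality assumptions on the gradient fields $\mQ_i$ make $\mathbf{C}=(\mathbf{Q}^{T}\mathbf{Q})^{-1}$ and make $H\,du$, up to a multiplicative constant, the image of Lebesgue measure under $\Phi^{-1}$, so that the operator $\mathcal{L}=\frac{1}{2H}\nabla\cdot H\mathbf{C}\nabla$, self-adjoint in $L^2(U,H\,du)$, is carried onto $\tfrac12\Delta$, self-adjoint in $L^2(\Lambda,dx)$; one checks it directly by It\^o's formula, the first-order terms coming from $\nabla\log H$ and $\nabla\cdot\mathbf{C}$ being precisely the It\^o correction $\tfrac12\Delta\Phi^{-1}$. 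Since $\Phi$ is a bijection with $\Phi(\prt U)=\prt\Lambda$, the exit time $\tau_U$ of the $\mathcal{L}$-diffusion from $u$ equals $\tau_\Lambda$ of the Brownian motion from $\Phi(u)$; hence assumptions (A1)--(A2) and the identity $\tau_\infty=\infty$, which hold for Brownian motion in $\Lambda$ by Section~\ref{de2.1}, transfer to the $\mathcal{L}$-diffusion in $U$ and to the associated Fleming--Viot processes. Verifying this intertwining and these transfers rigorously (in particular pinning down the precise orthogonal-coordinate structure and the non-atomicity of the exit time) is the main obstacle; everything else is bookkeeping enabled by Theorem~\ref{a1.7}.

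Now apply $\Phi$ coordinatewise to a Fleming--Viot process $\X^n$ of $n$ independent $\mathcal{L}$-diffusions in $U$ with the rebirth mechanism. Then $\Phi(\X^n):=(\Phi(X^1),\dots,\Phi(X^n))$ is a Fleming--Viot process in $\Lambda$ driven by Brownian motion: independence is preserved, transition probabilities are preserved by the previous step, the killing set is $\prt\Lambda=\Phi(\prt U)$, and since $\Phi$ is a bijection the uniform choice of a jump target among the remaining $n-1$ particles is preserved; the jump times $\tau_k$ and the combinatorial labels $\cL(\cdot)$ are unchanged, so the dynamical historical processes of $\Phi(\X^n)$ are the $\Phi$-images of those of $\X^n$, and hence the spine of $\Phi(\X^n)$ equals $\tilde J^n:=\Phi(J^n)$. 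The empirical initial measures $\mu_n$ of $\X^n_0$ being, as in Theorem~\ref{a1.7}, supported in a common compact $K\subset U$ and converging weakly to $\mu$, the pushforwards $\Phi_*\mu_n$ are supported in the compact set $\Phi(K)\subset\Lambda$, with $\dist(\Phi(K),\Lambda^c)>0$, and converge weakly to $\Phi_*\mu$. Theorem~\ref{a1.7} then gives that $\tilde J^n=\Phi(J^n)$ converges weakly to $\wh\P^{\Phi_*\mu}$, the law of Brownian motion conditioned to stay in $\Lambda$ forever, started from $c\,\vphi_\Lambda(x)\,\Phi_*\mu(dx)$, where $\vphi_\Lambda$ is the principal Dirichlet eigenfunction of $\tfrac12\Delta$ on $\Lambda$.

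Finally, $\Phi^{-1}\colon\Lambda\to U$ is continuous, so by the continuous mapping theorem $J^n=\Phi^{-1}(\tilde J^n)$ converges weakly to the image of $\wh\P^{\Phi_*\mu}$ under $\Phi^{-1}$, and it remains to recognise this limit. By the intertwining of the second step, if $(\lambda_\Lambda,\vphi_\Lambda)$ is the principal Dirichlet eigenpair of $\tfrac12\Delta$ on $\Lambda$, then $\vphi_\Lambda\circ\Phi$ is a positive Dirichlet eigenfunction of $\mathcal{L}$ on $U$ with eigenvalue $\lambda_\Lambda$; by simplicity of the principal eigenvalue, $\lambda=\lambda_\Lambda$ and $\phi_{\mathcal{L},\lambda}$ is a constant multiple of $\vphi_\Lambda\circ\Phi$. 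Since Doob's space-time $h$-transform, by $(x,t)\mapsto\vphi_\Lambda(x)e^{\lambda_\Lambda t}$ on the $\Lambda$ side and by $(u,t)\mapsto\phi_{\mathcal{L},\lambda}(u)e^{\lambda t}$ on the $U$ side, is defined through the path-measure density $h(\,\cdot_T,T)/h(\,\cdot_0,0)$ and $\vphi_\Lambda(\Phi(u))e^{\lambda_\Lambda t}$ is proportional to $\phi_{\mathcal{L},\lambda}(u)e^{\lambda t}$, it commutes with the deterministic map $\Phi^{-1}$. Hence the image of $\wh\P^{\Phi_*\mu}$ under $\Phi^{-1}$ is exactly the $\mathcal{L}$-diffusion conditioned to stay in $U$ forever, that is, the diffusion with generator $\mathcal{L}^{\phi_{\mathcal{L},\lambda}}$, started from the corresponding $\phi_{\mathcal{L},\lambda}$-weighting of $\mu$. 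This is the asserted limit, completing the proof.
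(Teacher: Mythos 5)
Your proposal is correct and follows essentially the same route as the paper's own (sketch) proof: push the Fleming--Viot process forward by the diffeomorphism $\Phi=\mathbf{x}$ to a Brownian-driven Fleming--Viot process on $\Lambda=\Phi(U)$, apply Theorem~\ref{a1.7}, and transfer the conclusion back via the intertwining of $\mathcal{L}$ with $\tfrac12\Delta$ (eigenvalue/eigenfunction/$h$-transform invariance). You are slightly more explicit than the paper on two routine points---that a bounded $C^1$ domain satisfies the Lipschitz-constant-$<1$ hypothesis of Section~\ref{de2.1}, and the use of the continuous mapping theorem to pull the weak limit back through $\Phi^{-1}$---while the paper instead phrases the transfer as a path-by-path coupling of the two particle systems; these are equivalent formulations of the same argument.
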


\begin{proof} (Sketch) As in the statement of the proposition, the  points in $U$ will be denoted by $(u_1,u_2,...,u_d)$. By assumption, $\mQ _i:\R^d\to\R^d$ is the gradient of some $C^2(\R^d)$-function $x_i=x_i(u_1,...,u_d)$,
 that is,
$$\mQ _i=(Q_{i1},Q_{i2},...,Q_{id})=\left(\frac{\partial x_i}{\partial u_{1}},...,\frac{\partial x_i}{\partial u_{d}} \right).$$
Define the map $\mathbf{x}:\R^d\to\R^d$
by $\mathbf{x}=(x_1,...,x_d)$;
by assumption, it has an inverse map  $\mathbf{u}:\R^d\to\R^d$.
Informally speaking, we have two systems  of coordinates on $\R^d$, which we will call the $x$-system and the $u$-system.

Let $D=  \x(U) $ and
note that $D$ is a $C^1$-domain because, by assumption, $\mathbf{Q}$ is non-singular on $\R^d$ so, using the Inverse Function Theorem, the map $\mathbf{x}:\R^d\to\R^d$
is a local diffeomorphism, under which the image of a $C^1$-domain  is a $C^1$-domain again.

Let $L=\Delta/2$ be the operator in the $x$-system  and let $\phi$ denote the principal eigenfunction associated with $L$ with Dirichlet boundary conditions on $ D $.
Let  $L^\phi$
denote the generator of Brownian motion conditioned to stay in $D$ forever.

As is well known (or a straightforward calculation reveals), the operator $\mathcal{L}=\frac{1}{2H}\nabla\cdot H\mathbf{C}\nabla$ in the $u$-system is the pullback of  $L$ in the $x$-system, where $H$ and $\mathbf{C}$ are defined via the non-singular matrix-function $\mathbf{Q}$ as in the statement of the proposition. This means  that $(Lf)(x)=(\mathcal{L}g)(u)$ for a twice differentiable $f$ where $g$ is the pullback of $f$ (i.e., $g(u):=f(x(u))$).

We will argue that the following claims hold true.
\begin{enumerate}
\item[(1)] The principal eigenvalues of the two operators agree.
\item[(2)] The pullback of $\phi$ is $\phi_{\mathcal{L},\lambda}$.
\item[(3)] 
The pullback of $L^{\phi}$ is $\mathcal{L}^{\phi_{\mathcal{L},\lambda}}$.
\item[(4)] Brownian motion (with killing at $\partial D$) in the $x$-system becomes an $\mathcal{L}$-diffusion (with killing at $\partial U $) in the  $u$-system; the $L^{\phi}$-diffusion in the $x$-system becomes
an $\mathcal{L}^{\phi_{\mathcal{L},\lambda}}$-diffusion in the $u$-system. 
\end{enumerate}
These are all invariance properties (under coordinate transforms): (1) follows from the fact that
\begin{align*}
\lambda(L;D)&=\inf \{l\in\R\mid \exists\ 0<f\in C^2(D): (L-l)f=0\},\\
\lambda(\mathcal{L}; U )&=\inf \{l\in\R\mid \exists\ 0<g\in C^2( U ): (\mathcal{L}-l)g=0\},
\end{align*}
while
(2) follows because the infimum is in fact the minimum and for the minimal value the function $f$ (function $g$) is unique.

Next, (3) follows because $L^{\phi}(\,\cdot\,)=\frac{1}{\phi}L(\phi(\, \cdot\,))-\lambda$ and an analogous formula holds for $\mathcal{L}^{\phi_{\mathcal{L},\lambda}}$, whereas
(4) follows from (1)-(3) along with the fact that diffusions are uniquely determined via  the corresponding martingale problems.

We conclude that the Fleming-Viot system on $D$ driven by Brownian motion describes the same process as the Fleming-Viot system on $ U $ driven by the $\mathcal{L}$-diffusion. 
In fact, we have a {\it coupling} of the two particle systems. In particular, the ``$n$-spine'' must refer to the same process: the unique infinite lineage. 
Hence, Theorem \ref{a1.7} implies that  the claim concerning the limiting distribution of $n$-spines is applicable to  the Fleming-Viot system on $ U $ driven by the $\mathcal{L}$-diffusion. We leave it to the reader to check that the notion of weak convergence is  invariant under the smooth coordinate transformation, and a similar remark applies to the statements involving the initial empirical measures.
\end{proof}

\subsection{Examples} Below we give some examples of domains and operators which can be generated using Proposition \ref{rev:prop}.

1.  In $d=1$, our setup includes an arbitrary interval $ U =(\alpha,\beta)$ with an operator
    $$\mathcal{L}=\frac{1}{2H}\frac{d}{du}\frac{1}{H}\frac{d}{du},$$
    where $H=|Q|$ and $Q$ satisfies the assumptions of the theorem.
For example, we can take  $x(u)=\text{sgn}(u)|u|^a$, $ a>0$. This yields $Q(u)=ax^{a-1}$ on $ U $ whenever $\alpha>1$. Thus, writing $\gamma:=1-a<1$, the operator becomes the time-changed Bessel-generator $$(\mathcal{L}f)(u)=\frac{u^{2\gamma}}{2(1-\gamma)^{2}}\left(f''(u)+\frac{\gamma}{u} f'(u)\right).$$
    
2. Let $d=2$. Then the proposition applies to all $C^1$-domains in $\R^2$ and all operators
    $$\mathcal{L}=\frac{1}{2H}\nabla\cdot H\mathbf{C}\nabla$$
on $U$, where 
$$\mathbf{C}=\begin{bmatrix} \|\nabla F\|& 0\\
0&\|\nabla G\|\end{bmatrix}^{-2},$$  
    $$H(u_1,u_2)= \|\nabla F\|^2\|\nabla G\|^2,$$
    and $(F,G)\in C^2(\R^2,\R^2)$ is invertible and such that
    $$\frac{\partial F}{\partial u_{1}}\frac{\partial G}{\partial u_{2}}\neq 
       \frac{\partial F}{\partial u_{2}}\frac{\partial G}{\partial u_{1}}.$$
       
3. Continuing the previous example, a more concrete class of operators for $d=2$ can be obtained  by choosing
       a $C^1$-domain $ U \subset (0,\infty)^2$ and $\alpha,\beta,\gamma,\delta\in\R$
       such that $\alpha\delta\neq \beta\gamma$ and 
       $$F(u,v)=\exp(\alpha u+\beta v),\quad G(u,v)=\exp(\gamma u+\delta v),$$
       yielding the operator $\mathcal{L}$ on $ U $ for which
       \begin{align*}
          \left(\mathcal{L}f\right)(u,v)=
          \frac{1}{2}e^{-2[(\alpha+\gamma) u+(\beta+\delta)v]}\nabla\cdot e^{2[(\alpha+\gamma) u+(\beta+\delta)v]}\,\mathbf{C}\nabla f(u,v),
       \end{align*}
       where $$\mathbf{C}:=\begin{bmatrix} (\alpha^2+\beta^2)e^{-2\alpha u-2\beta v} & 0\\0 & (\gamma^2+\delta^2)e^{-2\gamma u-2\delta v}\end{bmatrix}.$$
       This simplifies to
        \begin{align*}
         \left(\mathcal{L}f\right)(u,v)&=
          (\alpha^2+\beta^2)e^{-2(\alpha u+\beta v)}\left(\partial^2 f/\partial u^2+\gamma\,\partial f/\partial u\right)\\
&\qquad+
         (\gamma^2+\delta^2)e^{-2(\gamma u+\delta v)}\left(\partial^2 f/\partial v^2+\beta\,\partial f/\partial v\right).
          \end{align*}
          For example, when $(\alpha,\beta)$ and $(\gamma,\delta)$ are unit vectors in $\R^2$ and $w_1=\alpha u+\beta v$  and $w_2=\gamma u+\delta v$, then
         \begin{align*}
         \mathcal{L}=e^{-2w_{1}}\left(\partial^2 /\partial u^2+\gamma\,\partial /\partial u\right)+e^{-2w_{2}}\left(\partial^2 /\partial v^2+\beta\,\partial /\partial v\right).
          \end{align*}

\section{Spine for the superprocess version of our Fleming-Viot process}\label{sec:super}
We will briefly sketch an argument which shows that, in a sense, our main theorem has an analog for superprocesses. See \cite{AE} for an introduction to superprocesses.
Let us point out an unfortunate terminological inconsistency---a  Fleming-Viot superprocess (see, e.g., \cite{AE})
is not a natural superprocess analogue of our Fleming-Viot process
 because in the Fleming-Viot superprocess model the death rate does not depend on the spatial position of the particle, whereas in our model deaths occur only at $\partial D$. 
Intuitively, when working with superprocesses, one has already passed to the infinite limit with the  population size so individual particles have infinitesimally small masses.

Consider $\eps>0$ (the branching rate parameter) and let $X$ be a superprocess on a bounded Euclidean domain $D$ corresponding to the semilinear elliptic operator $Lu-\eps u^2$ on $D$, where $L$ is a second order elliptic operator with smooth coefficients and the underlying motion is the $L$-diffusion  with killing at $\partial D$. 
This process becomes extinct almost surely, but the so-called $Q$-process, denoted $X^{Q,\eps}$,  is well defined for all time. To construct  $X^{Q,\eps}$ one conditions $X$ to survive until time $t>0$ and then lets $t\to\infty$. Next we ``normalize'' $X^{Q,\eps}$ as follows, 
$$X^{\eps}_t:=\frac{X^{Q,\eps}_{t}}{\|X^{Q,\eps}_{t}\|}.$$

A  problem analogous to the one considered in this paper is to decide if we can identify a unique immortal particle for $X^\eps$ and then describe its distribution as $\eps\to 0$.
To justify the analogy, assume that  the intensity of the branching parameter $\eps$  is small.  If one  completely ignores the effect of  branching inside the domain,  the particle picture  is simple:  the (infinitesimal) particles are killed at the boundary $\partial D$, and at the same time their mass is instantaneously re-distributed without spatial bias in $D$, i.e., the birth/branching locations are uniformly chosen. Of course, the above picture is only approximate because $\eps\neq0$.

A result analogous to our  Theorem \ref{a1.7} would be that $X^{\eps}$  has a unique immortal particle with law denoted by $\QQ^{\eps}$ and $\lim_{\eps\to 0}\QQ^{\eps}=\QQ$, where $\QQ$ is the law of a diffusion corresponding to the elliptic operator $L^{\phi}$. 
This statement is indeed true.  In fact,  $X^{Q,\eps}$ has a unique immortal particle with the \emph{same} law  $\QQ$ for {\it every} $\eps>0$ by the spine decomposition that can be found, for example, in the recent preprint \cite{LRSS22} and references therein. It is shown that the $Q$-process can be  decomposed into an immortal ``infinitesimal'' particle (the ``spine'') and a collection of  ``local bushes'' which become extinct \cite[Lemmas 4.11 and
4.17]{LRSS22}. (See also \cite{RenSongSun} for  proofs.) The spine corresponds to the $h$-transformed operator $L^{\phi}$, where $\phi$ is the positive eigenfunction (ground state) for the principal eigenvalue of $L$ on $D$ with zero Dirichlet boundary conditions.
Finally, we note that 
the result holds for state spaces and operators much more general than described above; see again \cite{LRSS22}  and  references therein.

\section{Acknowledgments}

We are grateful to Rodrigo Ba\~nuelos for very useful advice.
We thank the referees for very detailed reports and many suggestions for improvement.

\bibliographystyle{imsart-number} 



\end{document}